\numberwithin{equation}{section}
\newtheorem{theorem}{Theorem}[section]
\newtheorem{lemma}[theorem]{Lemma}
\newtheorem{corollary}[theorem]{Corollary}
\newtheorem{remark}[theorem]{Remark}
\newtheorem{proposition}[theorem]{Proposition}
\title{On (m, n)-derivations of Some Algebras}
\author{\begin{tabular}{c} Jiankui Li\footnote{Corresponding author.
E-mail address: jiankuili@yahoo.com},~ Qihua Shen~and Jianbin Guo\\
{\small\it Department of Mathematics, East China University of
Science and Technology}\\
{\small\it Shanghai 200237, P. R. China}
\end{tabular}}
\date{}
\begin{document}

\maketitle \abstract
Let $\mathcal{A}$ be a unital algebra, $\delta$ be a linear mapping from $\mathcal{A}$ into itself and $m$, $n$ be fixed integers.
We call $\delta$ an (\textit{m, n})-derivable mapping at $Z$, if
$m\delta(AB)+n\delta(BA)=m\delta(A)B+mA\delta(B)+n\delta(B)A+nB\delta(A)$ for all $A, B\in \mathcal{A}$ with $AB=Z$.
In this paper, (\textit{m, n})-derivable mappings at $0$ (resp. $I_\mathcal{A}\oplus0$, $I$)
on generalized matrix algebras are characterized. We also study (\textit{m, n})-derivable mappings at $0$ on CSL algebras.
We reveal the relationship between this kind of mappings with Lie derivations, Jordan derivations and derivations.

\

{\sl Keywords} : CSL algebra, derivation, generalized matrix algebra, (\textit{m, n})-derivation

\

{\sl 2000 AMS classification} : Primary 47L35; Secondly 16W25
\

\section{Introduction}\

Let $\mathcal{R}$ be a unital ring and $\mathcal{A}$ be a unital $\mathcal{R}$-algebra.
Let $\delta$ be a linear mapping from $\mathcal{A}$ into itself.
We call $\delta$ a \textit{derivation} if $\delta(AB)=\delta(A)B+A\delta(B)$ for all $A, B\in \mathcal{A}$.
We call $\delta$ a \textit{Jordan derivation} if $\delta(AB+BA)=\delta(A)B+A\delta(B)+\delta(B)A+B\delta(A)$ for all $A, B\in \mathcal{A}$.
$\delta$ is called a \textit{Lie derivation} if $\delta([A, B])=[\delta(A),B]+[A,\delta(B)]$ for all $A, B\in \mathcal{A}$, where $[A, B]=AB-BA$.
The questions of characterizing Jordan derivations and Lie derivations have received considerable attention from several authors, who revealed the relationship
between Jordan derivations, derivations as well as Lie derivations (for example, \cite{SALD, semiprime, LDTA, prime, NON} and the references therein).

Let $m$, $n$ be fixed integers. In \cite{mn}, Vukman defined a new type of Jordan derivations, named (\textit{m, n})-\textit{Jordan derivation},
that is, an additive mapping $\eta$ from a ring $\mathcal{R}$ into itself such that
$(m+n)\eta(A^2)=2m\eta(A)A+2nA\eta(A)$ for every $A\in \mathcal{R}$.
He proved that each (\textit{m, n})-Jordan derivation of a prime ring is a derivation.
Motivated by this, we define a new type of derivations, named (\textit{m, n})-\textit{derivation}.
An (\textit{m, n})-derivation is a linear mapping $\delta$ from $\mathcal{A}$ into itself such that
$m\delta(AB)+n\delta(BA)=m\delta(A)B+mA\delta(B)+n\delta(B)A+nB\delta(A)$ for all $A, B\in \mathcal{A}$.
Obviously, every (1, 1)-derivation is a Jordan derivation, each (1, $-$1)-derivation is a Lie derivation,
(1, 0)-derivations and (0, 1)-derivations are derivations.

Recently, there have been a number of papers on the study of conditions under which derivations
on algebras can be completely determined by their action on some subsets of elements. Let $\delta$ be a linear mapping from $\mathcal{A}$ into itself and $Z$ be in $\mathcal{A}$.
$\delta$~is called~\textit{derivable at ~$Z$~},~if $\delta(AB)=\delta(A)B+A\delta(B)$ for all $A,B\in \mathcal{A}$~with~$AB=Z$; $\delta$~is called~\textit{Jordan derivable at ~$Z$~},~if $\delta(AB+BA)=\delta(A)B+A\delta(B)+\delta(B)A+B\delta(A)$ for all ~$A,B\in \mathcal{A}$~with~$AB=Z$;
$\delta$~is called~\textit{Lie derivable at ~$Z$~},~if $\delta([A,B])=[\delta(A),B]+[A,\delta(B)]$ for all~$A,B\in \mathcal{A}$~with~$AB=Z$. 
It is natural and interesting to ask whether or not a linear mapping is a derivation (Jordan derivation, or Lie derivation) if it is derivable (Jordan derivable, Lie derivable) only at one given point. An and Hou \cite{ANHOU} investigated derivable mappings at $0$, $P,$ and $I$ on triangular rings, where $P$ is some fixed non-trivial idempotent. Let $X$ be a Banach space, Lu and Jing \cite{COLD} studied Lie derivable mappings at $0$ and $P$ on $B(X)$, where $P$ is a fixed nontrivial idempotent. In \cite{ZHUJUN}, Zhao and Zhu characterized Jordan derivable mappings at $0$ and $I$ on triangular algebras.
(see \cite{AnHou, CHDM,Hou6, Hou4,Hou3, APM, LI7,LI5, COLD, LU1,Hou2,ZHUJUN, Zhu3,Zhu4,ZHUJUN2} and the references therein for more related results). Motivated by these facts, we introduce the concept of the mappings (m, n)-derivable at some point. We say that a linear mapping $\delta$ from $\mathcal{A}$ into itself is an (\textit{m, n})-\textit{derivable mapping} at $Z$, if $m\delta(AB)+n\delta(BA)=m\delta(A)B+mA\delta(B)+n\delta(B)A+nB\delta(A)$ for all $A, B\in \mathcal{A}$ with $AB=Z$.
In this paper, we study this kind of mappings on generalized matrix algebras and CSL algebras.

Let $\mathcal{R}$ be a unital ring. A \textit{Morita context} is a set $(\mathcal{A}, \mathcal{B}, \mathcal{M}, \mathcal{N})$
and two mappings $\phi$ and $\varphi$, where $\mathcal{A}$ and $\mathcal{B}$ are two $\mathcal{R}$-algebras, $\mathcal{M}$ is an
$(\mathcal{A}, \mathcal{B})$-bimodule, $\mathcal{N}$ is a $(\mathcal{B}, \mathcal{A})$-bimodule, and mappings $\phi: \mathcal{M}\otimes_\mathcal{B} \mathcal{N}\rightarrow \mathcal{A}$
and $\varphi: \mathcal{N}\otimes_\mathcal{A} \mathcal{M}\rightarrow \mathcal{B}$ are two bimodule homomorphisms satisfying the following commutative diagrams:

\setlength{\unitlength}{1cm}
\begin{picture}(4,4)
\put(1,3){$\mathcal{M}\otimes_\mathcal{B} \mathcal{N}\otimes_\mathcal{A} \mathcal{M}$} \put(3.4,3.05){\vector(1,0){2.2}}
\put(1.5,0.5){$\mathcal{M}\otimes_\mathcal{B} \mathcal{B}$} \put(2.8,0.55){\vector(1,0){3.1}}
\put(5.8,3){$\mathcal{A} \otimes_\mathcal{A} \mathcal{M}$} \put(6.3,2.75){\vector(0,-1){1.8}}
\put(6.1,0.5){$\mathcal{M}$} \put(2,2.75){\vector(0,-1){1.8}}
\put(4.2,0.65){$\cong$} \put(6.4,1.7){$\cong$}
\put(4.1,3.2){$\phi \otimes I_\mathcal{M}$}
\put(2.1,1.7){$I_\mathcal{M} \otimes \varphi$}
\put(-0.04,-0.5){and}
\end{picture}

\begin{picture}(4,4)
\put(1,3){$\mathcal{N}\otimes_\mathcal{A} \mathcal{M}\otimes_\mathcal{B} \mathcal{N}$} \put(3.4,3.05){\vector(1,0){2.2}}
\put(1.5,0.5){$\mathcal{N}\otimes_\mathcal{A} \mathcal{A}$} \put(2.8,0.55){\vector(1,0){3.1}}
\put(5.8,3){$\mathcal{B} \otimes_\mathcal{B} \mathcal{N}$} \put(6.3,2.75){\vector(0,-1){1.8}}
\put(6.1,0.5){$\mathcal{N}$} \put(2,2.75){\vector(0,-1){1.8}}
\put(4.2,0.65){$\cong$} \put(6.4,1.7){$\cong$}
\put(4.1,3.2){$\varphi \otimes I_\mathcal{N}$}
\put(2.1,1.7){$I_\mathcal{N} \otimes \phi$}
\put(6.5,0.48){.}
\end{picture}

These conditions insure that the set
$$\left[
  \begin{array}{cc}
    \mathcal{A} & \mathcal{M} \\
    \mathcal{N} & \mathcal{B} \\
  \end{array}
\right]=\left\{\left[
            \begin{array}{cc}
              A& M \\
              N & B \\
            \end{array}
          \right]: ~A\in \mathcal{A}, M\in \mathcal{M}, N\in \mathcal{N}, B\in \mathcal{B}\right\}$$
forms an $\mathcal{R}$-algebra under matrix-like addition and matrix-like multiplication if we put $MN=\phi(M,N)$ and $NM=\varphi(N,M)$.
We call such an $\mathcal{R}$-algebra a \textit{generalized matrix algebra} and denote it by
$\mathcal{U}=\left[
  \begin{array}{cc}
    \mathcal{A} & \mathcal{M} \\
    \mathcal{N} & \mathcal{B} \\
  \end{array}
\right]$, where $\mathcal{A}$ and $\mathcal{B}$ are two unital algebras and at least one of the two bimodules $\mathcal{M}$ and $\mathcal{N}$
is distinct from zero. Note that different choices of pairs of bimodule homomorphisms generally lead up to different algebras, even if we have the same set $(\mathcal{A}, \mathcal{B}, \mathcal{M}, \mathcal{N})$. This kind of algebra was first introduced by Sands in \cite{sands}.
Obviously, when $\mathcal{M}=0$ or $\mathcal{N}=0$, $\mathcal{U}$ degenerates to the triangular algebra.
We denote $I_\mathcal{A}$ the unit element in $\mathcal{A}$, $I_\mathcal{B}$ the unit element in $\mathcal{B}$ and $A\oplus B$ the element $\left[
  \begin{array}{cc}
    A & 0 \\
    0 & B \\
  \end{array}
\right]$ in $\mathcal{U}$.
Since $(I_\mathcal{A} \oplus 0) \mathcal{U} (I_\mathcal{A} \oplus 0)$ is a subalgebra of $\mathcal{U}$ isomorphic to $\mathcal{A}$, we will make no difference between the notations $(I_\mathcal{A} \oplus 0) \mathcal{U} (I_\mathcal{A} \oplus 0)$ and $\mathcal{A}$. Similarly, we regard $(0 \oplus I_\mathcal{B}) \mathcal{U} (0 \oplus I_\mathcal{B})$ the same as $\mathcal{B}$.
In Section 2-4, we shall show that if $\delta$ is an ($m, n$)-derivable mapping at 0
(resp. $I_\mathcal{A}\oplus0$, $I$) from $\mathcal{U}$ into itself,
then $\delta$ is a derivation, a Jordan derivation or a Lie derivation according to different choices of $m$ and $n$.

Let $H$ be a separable complex Hilbert space and $B(H)$ be the set of all bounded linear operators from $H$ into itself.
By a \textit{subspace lattice} on $H$, we mean a collection $\mathcal{L}$ of closed subspaces of $H$
with (0) and $H$ in $\mathcal{L}$ such that for every family $\{M_r\}$ of elements of $\mathcal{L}$,
both $\cap M_r$ and $\vee M_r$ belong to $\mathcal{L}$. For a subspace lattice $\mathcal{L}$ of $H$,
let alg$\mathcal{L}$ denote the algebra of all operators in $B(H)$ that leave members of $\mathcal{L}$ invariant;
and for a subalgebra $\mathcal{A}$ of $B(H)$, let lat$\mathcal{A}$ denote the lattice of all closed subspaces of $H$
that are invariant under all operators in $\mathcal{A}$.
An algebra $\mathcal{A}$ is called \textit{reflexive} if alglat$\mathcal{A}=\mathcal{A}$;
and dually, a subspace lattice $\mathcal{L}$ is said to be \textit{reflexive} if latalg$\mathcal{L}=\mathcal{L}$.
Every reflexive algebra is of the form alg$\mathcal{L}$ for some subspace lattice $\mathcal{L}$ and vice versa.
For convenience, we disregard the distinction between a closed subspace and the orthogonal projection onto it.
A totally ordered subspace lattice $\mathcal{N}$ on $H$ is called a \textit{nest}
and the corresponding algebra alg$\mathcal{N}$ is called a \textit{nest algebra}.
As an immediate but noteworthy application of the results in Section 2-4,
we characterize the mappings (\textit{m, n})-derivable at 0 (resp. $P$, $I$) from alg$\mathcal{N}$ into itself.
A subspace lattice $\mathcal L$ on $H$ is called a \textit{commutative~subspace~lattice}
(or \textit{CSL} for short), if all projections in $\mathcal L$ commute pairwise. If $\mathcal L$ is a CSL, then
alg$\mathcal{L}$ is called a \textit{CSL algebra}. By \cite{CSL}, we know that if $\mathcal{L}$ is a CSL, then $\mathcal{L}$
is reflexive. In Section 5, we show that if $\delta$ is a norm-continuous ($m, n$)-derivable mapping at 0 on CSL algebras with $m+n\neq 0$ and $\delta(I)=0$,
then $\delta$ is a derivation.

We call $\mathcal{M}$ a \textit{faithful left $\mathcal{A}$-module} if for any $A\in \mathcal{A}$, $A\mathcal{M}=0$ implies
$A=0$. Similarly, we can define a \textit{faithful right $\mathcal{B}$-module}.
If $\mathcal{M}$ is a faithful left $\mathcal{A}$-module and a faithful right $\mathcal{B}$-module,
then $\mathcal{M}$ is called a \textit{faithful $(\mathcal{A}, \mathcal{B})$-bimodule}.
Given an integer $n\geq 2$, we say that the characteristic of an algebra $\mathcal{A}$ is not $n$, if for every $A\in \mathcal{A}$, $nA=0$ implies $A=0$.

\section{(\textit{m, n})-derivable mappings at 0}
\

In this section, we study (\textit{m, n})-derivable mappings at $0$ on generalized matrix algebras. In the following of this paper, we always assume that $m\neq 0$ and $n\neq 0$.
\begin{theorem}\label{201}
Let $\mathcal{U}=\left[
  \begin{array}{cc}
    \mathcal{A} & \mathcal{M} \\
    \mathcal{N} & \mathcal{B} \\
  \end{array}
\right]$ be a generalized matrix algebra and $\delta$ be an $(m, n)$-derivable mapping at 0 from $\mathcal{U}$ into itself.
Assume that $\mathcal{M}$ is a faithful $(\mathcal{A}, \mathcal{B})$-bimodule.

$\mathrm{(1) }$~~~If $(m+n)(m-n)\neq 0$ and $char(\mathcal{U})\neq |mn(m+n)(m-n)|$, then $\delta$ is a derivation.

$\mathrm{(2)}$~~~If $m+n=0$,
$\delta([\mathcal{A},\mathcal{A}])\bigcap \mathcal{B}=0$, $\delta([\mathcal{B},\mathcal{B}])\bigcap \mathcal{A}=0$ and $char(\mathcal{U})\neq |2m|$,
then $\delta$ is a Lie derivation.

$\mathrm{(3)}$~~~If $m-n=0$ and $char(\mathcal{U})\neq |2m|$, then $\delta$ is a Jordan derivation.
\end{theorem}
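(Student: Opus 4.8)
The plan is to pass to the Peirce decomposition determined by the orthogonal idempotents $P=I_\mathcal{A}\oplus0$ and $Q=0\oplus I_\mathcal{B}=I-P$, writing each $T\in\mathcal{U}$ as $T=T_{11}+T_{12}+T_{21}+T_{22}$ with $T_{11}\in\mathcal{U}_{11}=P\mathcal{U}P\cong\mathcal{A}$, $T_{12}\in\mathcal{U}_{12}=P\mathcal{U}Q\cong\mathcal{M}$, $T_{21}\in\mathcal{U}_{21}=Q\mathcal{U}P\cong\mathcal{N}$ and $T_{22}\in\mathcal{U}_{22}=Q\mathcal{U}Q\cong\mathcal{B}$. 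The engine of the whole argument is that $\mathcal{U}_{ij}\mathcal{U}_{kl}=0$ whenever $j\neq k$, so a large supply of homogeneous pairs $(A,B)$ satisfies $AB=0$, and for every such pair the defining identity $m\delta(AB)+n\delta(BA)=m\delta(A)B+mA\delta(B)+n\delta(B)A+nB\delta(A)$ becomes available (with $\delta(AB)=\delta(0)=0$ by linearity).

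First I would pin down $\delta$ on $P$ and $Q$. Applying the identity to $(P,Q)$ and $(Q,P)$ — for both pairs $AB=BA=0$ — and then adding and subtracting the two resulting equations, the hypotheses $m+n\neq0$ and $m-n\neq0$ yield two independent relations among the Peirce components of $\delta(P)$ and $\delta(Q)$; reading them off blockwise (and cancelling the integer coefficients, which the characteristic hypothesis permits) kills the diagonal components and forces $\delta(P)_{12}=-\delta(Q)_{12}$ and $\delta(P)_{21}=-\delta(Q)_{21}$. Feeding in the mixed pairs $(M,P)$ with $M\in\mathcal{U}_{12}$ and $(N,Q)$ with $N\in\mathcal{U}_{21}$ — for which exactly one of the two products vanishes while the other equals the element itself — then upgrades this to $\delta(I)=0$; combined with the remaining homogeneous pairs drawn from the four corners it shows that $\delta$ respects the Peirce grading, i.e. $\delta(\mathcal{U}_{ij})\subseteq\mathcal{U}_{ij}$ for the two off-diagonal corners and $\delta(\mathcal{U}_{ii})\subseteq\mathcal{U}_{ii}$ for the diagonal ones.

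With the grading in hand, every product $S_{ij}T_{kl}$ with $j\neq k$ is covered directly, and once the now-vanishing cross terms are discarded the identity yields the block derivation rules $\delta(AM)=\delta(A)M+A\delta(M)$, $\delta(MB)=\delta(M)B+M\delta(B)$ and their $\mathcal{U}_{21}$ analogues, as well as the derivation property inside $\mathcal{U}_{11}$ and inside $\mathcal{U}_{22}$. The one place the zero-product trick fails is the pair of internal products $\mathcal{U}_{12}\mathcal{U}_{21}$ and $\mathcal{U}_{21}\mathcal{U}_{12}$, since for $M\in\mathcal{U}_{12}$, $N\in\mathcal{U}_{21}$ neither $MN$ nor $NM$ need vanish; this is the main obstacle. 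I would handle it by setting $\Phi=\delta(MN)-\delta(M)N-M\delta(N)\in\mathcal{A}$ and computing $\Phi M'$ for $M'\in\mathcal{U}_{12}$: associativity $(MN)M'=M(NM')$ together with the already-established $\mathcal{U}_{11}\mathcal{U}_{12}$ and $\mathcal{U}_{12}\mathcal{U}_{22}$ rules reduces $\Phi M'$ to $M(\delta(NM')-\delta(N)M'-N\delta(M'))$, which couples the $\mathcal{U}_{12}\mathcal{U}_{21}$ rule to the $\mathcal{U}_{21}\mathcal{U}_{12}$ rule; solving the two simultaneously and then using that $\mathcal{M}$ is faithful (so $\Phi M'=0$ for all $M'$ forces $\Phi=0$) closes the loop and gives both. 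Assembling the four block rules, the two corner derivations and the two cross rules for arbitrary $S=\sum S_{ij}$ and $T=\sum T_{kl}$ then yields $\delta(ST)=\delta(S)T+S\delta(T)$, proving (1). The factor $mn(m+n)(m-n)$ in the characteristic hypothesis is exactly the denominator accumulated when cancelling $m,n$ in the block rules and $m\pm n$ in the idempotent relations, and the repeated passage from module-level to element-level identities via faithfulness is the step I expect to be the most delicate.

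Parts (2) and (3) run on the same skeleton with a degenerate coefficient. When $m-n=0$ the identity collapses, after cancelling the scalar (legitimate since $char(\mathcal{U})\neq|2m|$), to $\delta(AB+BA)=\delta(A)B+A\delta(B)+\delta(B)A+B\delta(A)$ for all $A,B$ with $AB=0$; rerunning the Peirce analysis with only this symmetric combination available produces the Jordan derivation identity, giving (3). When $m+n=0$ it collapses to $\delta([A,B])=[\delta(A),B]+[A,\delta(B)]$ for all $A,B$ with $AB=0$, so $\delta$ is Lie derivable at $0$; the off-diagonal and cross analyses go through as before, but on the diagonal corners the commutator structure determines $\delta$ only modulo a map into the centres of $\mathcal{A}$ and $\mathcal{B}$. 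The hypotheses $\delta([\mathcal{A},\mathcal{A}])\cap\mathcal{B}=0$ and $\delta([\mathcal{B},\mathcal{B}])\cap\mathcal{A}=0$ are precisely what remove this central ambiguity and upgrade the conclusion to the full Lie derivation identity, giving (2).
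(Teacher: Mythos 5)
Your skeleton (homogeneous zero-product pairs, block rules, assembly via the structure of derivations on $\mathcal{U}$) matches the paper's Lemma~2.2/Corollary~2.5 strategy, but your proposal has a fatal gap exactly at the point you flag as the main obstacle, the $\mathcal{U}_{12}\mathcal{U}_{21}$ products. Your associativity coupling does give $\Phi(M,N)M'=M\Psi(N,M')$ and, symmetrically, $\Psi(N,M)N'=N\Phi(M,N')$, but this coupled system has nonzero solutions, so ``solving the two simultaneously'' does not produce $\Phi M'=0$, and faithfulness then has nothing to act on. Concretely, take $\mathcal{U}=M_2(\mathbb{C})$ with $P=e_{11}$, and define $\delta(T)=cPTQ+dQTP$ with $c+d\neq 0$ (so $\delta(e_{12})=ce_{12}$, $\delta(e_{21})=de_{21}$, $\delta$ vanishing on the diagonal). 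This map preserves the Peirce grading, satisfies $\delta(I)=0$, satisfies every block rule and every identity obtainable from your homogeneous zero-product pairs, and its defects $\Phi(M,N)=-(c+d)MN$, $\Psi(N,M)=-(c+d)NM$ satisfy your coupled system --- yet $\Phi\neq 0$ and $\delta$ is not a derivation. So your listed ingredients provably underdetermine the conclusion. The theorem survives because an $(m,n)$-derivable mapping at $0$ must also satisfy the identity for \emph{inhomogeneous} zero-product pairs that do probe $MN$: this is precisely Step~2 of the paper's Lemma~2.2, which takes $S=\left[\begin{smallmatrix}-MN & M\\ 0 & 0\end{smallmatrix}\right]$, $T=\left[\begin{smallmatrix}I_\mathcal{A} & 0\\ N & 0\end{smallmatrix}\right]$ (so $ST=0$) and its companion to extract $a_{11}(MN)=c_{12}(M)N+Md_{21}(N)+b_{11}(NM)$ and $b_{22}(NM)=Nc_{12}(M)+d_{21}(N)M+a_{22}(MN)$; the example above violates exactly these, which is why it is not in fact $(m,n)$-derivable at $0$, but nothing in your proposal detects that.

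A second, repairable error: your intermediate claim that $\delta$ respects the Peirce grading is false. Every derivation is $(m,n)$-derivable at $0$ (both sides of the defining identity vanish on $AB=0$ resp.\ collapse), in particular $\operatorname{ad}(X)$ with $X=\left[\begin{smallmatrix}0 & M_0\\ N_0 & 0\end{smallmatrix}\right]$, which sends $\left[\begin{smallmatrix}0 & M\\ 0 & 0\end{smallmatrix}\right]$ to $\left[\begin{smallmatrix}-MN_0 & 0\\ 0 & N_0M\end{smallmatrix}\right]$; so no supply of zero-product pairs can prove $\delta(\mathcal{U}_{12})\subseteq\mathcal{U}_{12}$. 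You must either carry the off-diagonal data $M_0=\delta(P)_{12}$, $N_0=\delta(P)_{21}$ through all computations, as the paper does in Lemma~2.2 and Propositions~2.3--2.5, or first normalize by replacing $\delta$ with $\delta-\operatorname{ad}(X)$ (legitimate, since $\operatorname{ad}(X)$ is $(m,n)$-derivable at $0$) --- a reduction you never perform, and which your subsequent ``discard the now-vanishing cross terms'' step silently presupposes. Also, your pair $(N,Q)$ only yields $\delta(Q)_{22}N=0$, which is vacuous when $\mathcal{N}=0$ (the hypothesis makes only $\mathcal{M}$ faithful); to kill $\delta(Q)_{22}$ and get $\delta(I)=0$ you need the pair $(Q,M)$ and right-faithfulness of $\mathcal{M}$ instead.
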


Since $\delta$ is linear, for any $A\in \mathcal{A}$, $M\in \mathcal{M}$, $N\in \mathcal{N}$ and $B\in \mathcal{B}$,
we may write
 \begin{eqnarray*}
\delta\left(\left[\begin{array}{cc}A & M \\N & B \\\end{array}\right]\right)
=\left[\begin{array}{cc}a_{11}(A)+b_{11}(B)+c_{11}(M)+d_{11}(N) & a_{12}(A)+b_{12}(B)+c_{12}(M)+d_{12}(N) \\a_{21}(A)+b_{21}(B)+c_{21}(M)+d_{21}(N) & a_{22}(A)+b_{22}(B)+c_{22}(M)+d_{22}(N) \\\end{array}\right],
\end{eqnarray*}
where $a_{ij}$, $b_{ij}$, $c_{ij}$ and $d_{ij}$ are linear mappings, $i, j\in \{1, 2\}$.

To prove Theorem \ref{201}, we first show a lemma and several propositions.

\begin{lemma}\label{202}
Let $\mathcal{U}=\left[
  \begin{array}{cc}
    \mathcal{A} & \mathcal{M} \\
    \mathcal{N} & \mathcal{B} \\
  \end{array}
\right]$ be a generalized matrix algebra with $char(\mathcal{U})\neq |mn|$ and $\delta$ be an $(m, n)$-derivable mapping at 0 from $\mathcal{U}$ into itself.
Then
$$\delta\left(\left[\begin{array}{cc}A & M \\N & B \\\end{array}\right]\right)=
\left[\begin{array}{cc}a_{11}(A)+b_{11}(B)-MN_0-M_0N & AM_0-M_0B+c_{12}(M)+d_{12}(N)
\\N_0A-BN_0+c_{21}(M)+d_{21}(N) & a_{22}(A)+b_{22}(B)+N_0M+NM_0 \\\end{array}\right],$$
where $N_0\in\mathcal{N}$, $M_0\in\mathcal{M}$ and $a_{11}:\mathcal{A}\rightarrow\mathcal{A}$,
$a_{22}:\mathcal{A}\rightarrow\mathcal{B}$, $b_{11}:\mathcal{B}\rightarrow\mathcal{A}$,
$b_{22}:\mathcal{B}\rightarrow\mathcal{B}$, $c_{12}:\mathcal{M}\rightarrow\mathcal{M}$,
$c_{21}:\mathcal{M}\rightarrow\mathcal{N}$, $d_{12}:\mathcal{N}\rightarrow\mathcal{M}$,
$d_{21}:\mathcal{N}\rightarrow\mathcal{N}$ are linear mappings satisfying
\begin{eqnarray*}
\mathrm{(i)}&&nc_{12}(AM)=mMa_{22}(A)+na_{11}(A)M+nAc_{12}(M),~nc_{21}(AM)=mc_{21}(M)A,\\
&&nd_{21}(NA)=ma_{22}(A)N+nNa_{11}(A)+nd_{21}(N)A,~nd_{12}(NA)=mAd_{12}(N),\\
&&mBa_{22}(A)+na_{22}(A)B=0;\\
\mathrm{(ii)}&&nc_{12}(MB)=mb_{11}(B)M+nMb_{22}(B)+nc_{12}(M)B,~nc_{21}(MB)=mBc_{21}(M),\\
&&nd_{21}(BN)=mNb_{11}(B)+nb_{22}(B)N+nBd_{21}(N),~nd_{12}(BN)=md_{12}(N)B,\\
&&mb_{11}(B)A+nAb_{11}(B)=0;\\
\mathrm{(iii)}&&a_{11}(MN)=c_{12}(M)N+Md_{21}(N)+b_{11}(NM),~b_{22}(NM)=Nc_{12}(M)+d_{21}(N)M+a_{22}(MN).
\end{eqnarray*}
\end{lemma}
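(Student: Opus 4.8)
\section*{Proof proposal}

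The plan is to exploit the defining identity only through products that vanish. Since $\delta$ is linear, $\delta(0)=0$, so for every pair $X,Y\in\mathcal{U}$ with $XY=0$ we have the usable relation
\[ n\,\delta(YX)=m\,\delta(X)Y+mX\,\delta(Y)+n\,\delta(Y)X+nY\,\delta(X), \]
together with its mirror image whenever $YX=0$ as well. I would feed into this all pairs built from the four corner elements $\begin{bmatrix}A&0\\0&0\end{bmatrix}$, $\begin{bmatrix}0&0\\0&B\end{bmatrix}$, $\begin{bmatrix}0&M\\0&0\end{bmatrix}$, $\begin{bmatrix}0&0\\N&0\end{bmatrix}$ whose product is $0$, and then read off the four matrix entries of each resulting identity. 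Throughout, the hypothesis $char(\mathcal{U})\neq|mn|$ is used in the form that $mx=0$ or $nx=0$ forces $x=0$ (since then $|mn|x=0$); this is what lets me clear the integer coefficients. The two distinguished elements are produced by evaluating at the units: set $M_0:=a_{12}(I_{\mathcal{A}})$ and $N_0:=a_{21}(I_{\mathcal{A}})$.

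First the off-diagonal corner maps and the two ``diagonal'' relations. Applying the identity to $\begin{bmatrix}A&0\\0&0\end{bmatrix}$ and $\begin{bmatrix}0&0\\0&B\end{bmatrix}$ (whose products vanish in both orders) and inspecting the $(1,2)$-entry gives $a_{12}(A)B+A\,b_{12}(B)=0$; specializing $B=I_{\mathcal{B}}$ yields $a_{12}(A)=AM_0$ and specializing $A=I_{\mathcal{A}}$ yields $b_{12}(B)=-M_0B$, and symmetrically the $(2,1)$-entry gives $a_{21}(A)=N_0A$, $b_{21}(B)=-BN_0$. The $(1,1)$- and $(2,2)$-entries of the same computation, taken in both orders, are exactly the last relations of (ii) and (i), namely $m\,b_{11}(B)A+nA\,b_{11}(B)=0$ and $m\,Ba_{22}(A)+n\,a_{22}(A)B=0$. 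Next I would run the mixed pairs. Applying the identity to $\begin{bmatrix}0&M\\0&0\end{bmatrix}$ and $\begin{bmatrix}A&0\\0&0\end{bmatrix}$ produces, after substituting $a_{21}(A)=N_0A$ and evaluating at $A=I_{\mathcal{A}}$, the forms $c_{11}(M)=-MN_0$ and $c_{22}(M)=N_0M$, while its remaining entries are precisely the $c_{12}$- and $c_{21}$-relations of (i); the pairs built with $\begin{bmatrix}0&0\\N&0\end{bmatrix}$ give $d_{11}(N)=-M_0N$, $d_{22}(N)=NM_0$ and the $d_{12}$-, $d_{21}$-relations of (i), and the analogous pairs built from $\begin{bmatrix}0&0\\0&B\end{bmatrix}$ give all of (ii). Assembling these eight identities yields exactly the asserted matrix form of $\delta$.

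The genuinely awkward point is (iii), because every product of an $\mathcal{M}$-corner with an $\mathcal{N}$-corner that manufactures $MN$ or $NM$ is itself nonzero, so the vanishing-product identity never applies to the naive pair $\left(\begin{bmatrix}0&M\\0&0\end{bmatrix},\begin{bmatrix}0&0\\N&0\end{bmatrix}\right)$. My route around this is to use an idempotent-corrected pair: take $X=\begin{bmatrix}I_{\mathcal{A}}&M\\0&0\end{bmatrix}$ and $Y=\begin{bmatrix}-MN&-M\\N&I_{\mathcal{B}}\end{bmatrix}$, which is forced to carry the correcting block $-MN$ precisely so that $XY=0$. Here $YX=\begin{bmatrix}-MN&-MNM\\N&NM\end{bmatrix}$, so the left-hand side $n\,\delta(YX)$ now genuinely involves $a_{11}(MN)$, $a_{22}(MN)$ together with the $\mathcal{B}$-corner entry $NM$, and this is the source of the $b_{11}(NM)$ and $b_{22}(NM)$ that appear in (iii). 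I would substitute into the identity all eight forms established above and then read the $(1,1)$- and $(2,2)$-entries; after the higher-order (cubic) contributions, the terms in $MNM$ coming from $c_{11}$, $c_{22}$ and from $a_{12}$, $a_{21}$, cancel, and the coefficients $m,n$ are cleared using $char(\mathcal{U})\neq|mn|$, what survives is exactly the two relations of (iii).

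I expect the main obstacle to be this last step: keeping the bookkeeping straight in the expansion for the corrected pair and verifying that all cubic terms cancel so that only the clean bilinear relations of (iii) remain. The earlier steps are essentially mechanical once the correct vanishing products are chosen; it is only the production of $MN$ and $NM$ inside a genuinely zero product, and the subsequent cancellation, that requires real care.
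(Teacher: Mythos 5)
Your Step 1 is sound and is essentially the paper's own argument in a different packaging: the paper uses mixed elements such as $S=\left[\begin{smallmatrix}0&M\\0&B\end{smallmatrix}\right]$, $T=\left[\begin{smallmatrix}A&0\\0&0\end{smallmatrix}\right]$ and then specializes $B=0$ or $M=0$, which is equivalent to your pure-corner pairs taken in both orders; all the relations in (i), (ii), the corner forms $a_{12}(A)=AM_0$, $b_{21}(B)=-BN_0$, $c_{11}(M)=-MN_0$, etc., come out exactly as you describe, and your use of $char(\mathcal{U})\neq|mn|$ to clear a single coefficient $m$ or $n$ is correct.

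The gap is in your treatment of (iii). Your single pair $X=\left[\begin{smallmatrix}I_\mathcal{A}&M\\0&0\end{smallmatrix}\right]$, $Y=\left[\begin{smallmatrix}-MN&-M\\N&I_\mathcal{B}\end{smallmatrix}\right]$ does satisfy $XY=0$, but because $Y$ carries $I_\mathcal{B}$ in its $(2,2)$-corner and $X$ carries $I_\mathcal{A}$ in its $(1,1)$-corner, the terms $m\,\delta(X)Y$ and $n\,Y\delta(X)$ inject $\beta:=a_{22}(I_\mathcal{A})$, $\alpha:=a_{11}(I_\mathcal{A})$, $c_{21}(M)M$ and $Mc_{21}(M)$ into the diagonal entries, and these do \emph{not} all cancel. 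Writing out the $(2,2)$-entry and using $(m+n)\beta=0$ (which does follow from Step 1), what survives is
\begin{equation*}
n\bigl(b_{22}(NM)-a_{22}(MN)-Nc_{12}(M)-d_{21}(N)M\bigr)=-(m+n)\,c_{21}(M)M,
\end{equation*}
and the $(1,1)$-entry similarly reduces to $m\,a_{11}(MN)+n\,b_{11}(NM)=m\,c_{12}(M)N+m\,Md_{21}(N)-m\,\alpha MN-n\,MN\alpha-(m+n)Mc_{21}(M)$, with mismatched coefficients $m$ and $n$ on the two sides. Under the lemma's sole hypothesis $char(\mathcal{U})\neq|mn|$ you cannot discard these extra terms: Step 1 gives only $(m-n)c_{21}(M)=0$, not $(m+n)c_{21}(M)M=0$, and no hypothesis on $m\pm n$ is available — indeed Theorem 2.1 invokes the lemma precisely in the cases $m+n=0$ and $m=n$, where $c_{21}$, $\alpha$, $\beta$ need not vanish (in the Jordan case $m=n$, Proposition 2.4 shows $c_{21}$ genuinely survives). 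So your claim that ``what survives is exactly the two relations of (iii)'' fails. The paper's remedy is to use \emph{two} rank-corrected pairs, each engineered so that the target diagonal entry receives contributions only with the uniform coefficient $n$: for the $b_{22}$-relation it takes $S=\left[\begin{smallmatrix}-MN&M\\0&0\end{smallmatrix}\right]$, $T=\left[\begin{smallmatrix}I_\mathcal{A}&0\\N&0\end{smallmatrix}\right]$ (the second column of $T$ and second row of $S$ are zero, so $m\delta(S)T$ and $mS\delta(T)$ contribute nothing to the $(2,2)$-entry, and no $\beta$ or $c_{21}(M)M$ terms arise), and symmetrically $S=\left[\begin{smallmatrix}0&0\\N&I_\mathcal{B}\end{smallmatrix}\right]$, $T=\left[\begin{smallmatrix}0&M\\0&-NM\end{smallmatrix}\right]$ for the $a_{11}$-relation. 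You should replace your single pair by two such asymmetric pairs; as it stands, part (iii) of your proof does not go through.
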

\begin{proof}
We prove the lemma by two steps.\

\textbf{Step 1:}~~
For any $A\in\mathcal{A}$, $B\in\mathcal{B}$ and $M\in\mathcal{M}$,
let $S=\left[\begin{array}{cc}0 & M \\0 & B \\\end{array}\right]$
and $T=\left[\begin{array}{cc}A & 0 \\0 & 0 \\\end{array}\right]$. Then $ST=0$ and we have
\begin{eqnarray*}
&&n\left[\begin{array}{cc} c_{11}(AM) & c_{12}(AM) \\c_{21}(AM) & c_{22}(AM)\\\end{array}\right]
=m\delta(ST)+n\delta(TS)=m\delta(S)T+mS\delta(T)+n\delta(T)S+nT\delta(S)\\
&&~~~=m\left[\begin{array}{cc} b_{11}(B)+c_{11}(M) & b_{12}(B)+c_{12}(M) \\  b_{21}(B)+c_{21}(M)
& b_{22}(B)+c_{22}(M) \end{array}\right] \left[\begin{array}{cc}A & 0 \\0 & 0\\\end{array}\right]
+m\left[\begin{array}{cc} 0 & M \\  0 & B  \end{array}\right] \left[\begin{array}{cc}a_{11}(A) & a_{12}(A) \\a_{21}(A) & a_{22}(A)\\\end{array}\right]\\
&&~~~~~~+n\left[\begin{array}{cc}a_{11}(A) & a_{12}(A) \\a_{21}(A) & a_{22}(A)\\\end{array}\right] \left[\begin{array}{cc} 0 & M \\  0 & B  \end{array}\right]
+n\left[\begin{array}{cc}A & 0 \\0 & 0\\\end{array}\right]\left[\begin{array}{cc} b_{11}(B)+c_{11}(M)
& b_{12}(B)+c_{12}(M) \\  b_{21}(B)+c_{21}(M) & b_{22}(B)+c_{22}(M)  \end{array}\right].
\end{eqnarray*}
The above matrix equation implies the following four equations
\begin{eqnarray}
&&nc_{11}(AM)=mb_{11}(B)A+mc_{11}(M)A+mMa_{21}(A)+nAb_{11}(B)+nAc_{11}(M),\label{0001}\\
&&nc_{12}(AM)=mMa_{22}(A)+na_{11}(A)M+na_{12}(A)B+nAb_{12}(B)+nAc_{12}(M),\label{0002}\\
&&nc_{21}(AM)=mb_{21}(B)A+mc_{21}(M)A+mBa_{21}(A),\label{0003}\\
&&nc_{22}(AM)=mBa_{22}(A)+na_{21}(A)M+na_{22}(A)B.\label{0004}
\end{eqnarray}

Taking $B=0$ in (\ref{0002}) and (\ref{0003}), we have
\begin{eqnarray}
&&nc_{12}(AM)=mMa_{22}(A)+na_{11}(A)M+nAc_{12}(M),\label{0005}\\
&&~~~~~~~~~nc_{21}(AM)=mc_{21}(M)A\label{0006}
\end{eqnarray}
for all $A\in \mathcal{A}$ and $M\in \mathcal{M}$.

Taking $M=0$ in (\ref{0003}), we have
\begin{eqnarray}
b_{21}(B)A=-Ba_{21}(A)\label{0007}
\end{eqnarray}
for all $A\in \mathcal{A}$ and $B\in \mathcal{B}$.
Taking $A=I_\mathcal{A}$ and $B=I_\mathcal{B}$ in (\ref{0007}) gives $a_{21}(I_\mathcal{A})=-b_{21}(I_\mathcal{B})$.
Let $N_0=a_{21}(I_\mathcal{A})$. Then taking $A=I_\mathcal{A}$ and $B=I_\mathcal{B}$ in (\ref{0007}), respectively,
leads to
\begin{eqnarray}
a_{21}(A)=N_0A~~~\textrm{and}~~~b_{21}(B)=-BN_0\label{00081}
\end{eqnarray}
for every $A\in \mathcal{A}$ and every $B\in \mathcal{B}$.
Similarly, by taking $M=0$ in (\ref{0002}), we obtain $a_{12}(A)B=-Ab_{12}(B)$. Let $a_{12}(I_\mathcal{A})=M_0$,
and we have
\begin{eqnarray}
a_{12}(A)=AM_0~~~\textrm{and}~~~b_{12}(B)=-M_0B\label{00082}
\end{eqnarray}
for every $A\in \mathcal{A}$ and every $B\in \mathcal{B}$.

Taking $M=0$ in (\ref{0001}) and (\ref{0004}), we have
\begin{eqnarray}
mb_{11}(B)A+nAb_{11}(B)=0~~~\textrm{and}~~~mBa_{22}(A)+na_{22}(A)B=0
\end{eqnarray}
for all $A\in \mathcal{A}$ and $B\in \mathcal{B}$.

Taking $B=0$ and $A=I_\mathcal{A}$ in (\ref{0001}), as well as in (\ref{0004}), we obtain
\begin{eqnarray}
c_{11}(M)=-MN_0~~~\textrm{and}~~~c_{22}(M)=N_0M
\end{eqnarray}
for every $M\in \mathcal{M}$.

Similarly, by considering $S=\left[\begin{array}{cc}0 & 0 \\0 & B \\\end{array}\right]$
and $T=\left[\begin{array}{cc} 0 & M \\0 & 0 \\\end{array}\right]$, we arrive at
\begin{eqnarray}
&&nc_{12}(MB)=mb_{11}(B)M+nMb_{22}(B)+nc_{12}(M)B,\label{0008}\\
&&nc_{21}(MB)=mBc_{21}(M)\label{0009}
\end{eqnarray}
for all $B\in \mathcal{B}$ and $M\in\mathcal{M}$.

By considering $S=\left[\begin{array}{cc}0 & 0 \\N & 0 \\\end{array}\right]$ and
$T=\left[\begin{array}{cc}0 & 0 \\0 & B \\\end{array}\right]$, we have
\begin{eqnarray}
&&~~~~~~~~~nd_{21}(BN)=mNb_{11}(B)+nb_{22}(B)N+nBd_{21}(N),\label{0010}\\
&&d_{11}(N)=-M_0N,~~~d_{22}(N)=NM_0~~~ \textrm{and}~~~nd_{12}(BN)=md_{12}(N)B\label{0012}
\end{eqnarray}
for all $B\in \mathcal{B}$ and $N\in\mathcal{N}$.

By considering $S=\left[\begin{array}{cc}A & 0 \\0 & 0 \\\end{array}\right]$ and
$T=\left[\begin{array}{cc}0 & 0 \\N & 0 \\\end{array}\right]$,
we obtain
\begin{eqnarray}
&&nd_{21}(NA)=ma_{22}(A)N+nNa_{11}(A)+nd_{21}(N)A,\label{0013}\\
&&~~~~~~~~~~nd_{12}(NA)=mAd_{12}(N)\label{0014}
\end{eqnarray}
for all $A\in \mathcal{A}$ and $N\in\mathcal{N}$.

\textbf{Step 2:}~~
For any $N\in\mathcal{N}$ and $M\in\mathcal{M}$,
let $S=\left[\begin{array}{cc}-MN & M \\0 & 0 \\\end{array}\right]$ and
$T=\left[\begin{array}{cc}I_\mathcal{A} & 0 \\N & 0 \\\end{array}\right]$. Then $ST=0$ and we have\\
\begin{eqnarray*}
&&n\left[\begin{array}{cc}-a_{11}(MN)-MN_0+M_0NMN+b_{11}(NM) & -MNM_0+c_{12}(M)-d_{12}(NMN)-M_0NM
\\-N_0MN+c_{21}(M)-d_{21}(NMN)-NMN_0 & -a_{22}(MN)+N_0M-NMNM_0+b_{22}(NM) \\\end{array}\right]\\
&&~~~~~~~~~~~~~~~~=m\delta(ST)+n\delta(TS)=m\delta(S)T+mS\delta(T)+n\delta(T)S+nT\delta(S)\\
&&~~~~~~~~~~~~~~~~=m\left[\begin{array}{cc} -a_{11}(MN)-MN_0 & -MNM_0+c_{12}(M) \\ -N_0MN+c_{21}(M) & -a_{22}(MN)+N_0M \end{array}\right]
\left[\begin{array}{cc}I_\mathcal{A} & 0 \\N & 0\\\end{array}\right]\\
&&~~~~~~~~~~~~~~~~~~~+m\left[\begin{array}{cc} -MN & M \\  0 & 0  \end{array}\right]
\left[\begin{array}{cc}a_{11}(I_\mathcal{A})-M_0N & M_0+d_{12}(N) \\ N_0+d_{21}(N) & a_{22}(I_\mathcal{A})+NM_0\\\end{array}\right]\\
&&~~~~~~~~~~~~~~~~~~~+n\left[\begin{array}{cc}a_{11}(I_\mathcal{A})-M_0N & M_0+d_{12}(N) \\ N_0+d_{21}(N) & a_{22}(I_\mathcal{A})+NM_0\\\end{array}\right]\left[\begin{array}{cc} -MN & M \\  0 & 0  \end{array}\right]\\
&&~~~~~~~~~~~~~~~~~~~+ n\left[\begin{array}{cc}I_\mathcal{A} & 0 \\N & 0\\\end{array}\right]
\left[\begin{array}{cc} -a_{11}(MN)-MN_0 & -MNM_0+c_{12}(M) \\ -N_0MN+c_{21}(M) & -a_{22}(MN)+N_0M \end{array}\right].
\end{eqnarray*}
The above matrix relation implies
\begin{eqnarray}
b_{22}(NM)=Nc_{12}(M)+d_{21}(N)M+a_{22}(MN)\label{0016}
\end{eqnarray}
for all $N\in\mathcal{N}$ and $M\in\mathcal{M}$.
Similarly, by considering $S=\left[\begin{array}{cc}0 & 0 \\N & I_\mathcal{A} \\\end{array}\right]$ and
$T=\left[\begin{array}{cc}0 & M \\0 & -NM \\\end{array}\right]$, we have
\begin{eqnarray}
a_{11}(MN)=c_{12}(M)N+Md_{21}(N)+b_{11}(NM)\label{0017}
\end{eqnarray}
for all $N\in\mathcal{N}$ and $M\in\mathcal{M}$.

By (\ref{0005}), (\ref{0006}) and (\ref{00081})-(\ref{0017}), the proof is complete.
\end{proof}

Before proving the theorem, we show several propositions concerning the structure of
Lie derivations, Jordan derivations and derivations on
$\mathcal{U}$.

\begin{proposition}\label{203}
A linear mapping $\delta$ on $\mathcal{U}=\left[\begin{array}{cc}\mathcal{A} & \mathcal{M} \\ \mathcal{N} & \mathcal{B} \\ \end{array} \right]$
with $char(\mathcal{U})\neq2$ is a Lie derivation if and only if it is of the form
$$\delta\left(\left[\begin{array}{cc}A & M \\N & B \\\end{array}\right]\right)=\left[\begin{array}{cc}a_{11}(A)+b_{11}(B)-MN_0-M_0N & AM_0-M_0B+c_{12}(M) \\N_0A-BN_0+d_{21}(N) & a_{22}(A)+b_{22}(B)+N_0M+NM_0 \\\end{array}\right],$$
where $N_0\in\mathcal{N}$, $M_0\in\mathcal{M}$ and $a_{11}:\mathcal{A}\rightarrow\mathcal{A}$, $b_{22}:\mathcal{B}\rightarrow\mathcal{B}$, $b_{11}:\mathcal{B}\rightarrow Z(\mathcal{A})$, $a_{22}:\mathcal{A}\rightarrow Z(\mathcal{B})$, $c_{12}:\mathcal{M}\rightarrow\mathcal{M}$, $d_{21}:\mathcal{N}\rightarrow\mathcal{N}$ are linear mappings satisfying

$\mathrm{(1)}$~~~$a_{11}$ is a Lie derivation on  $\mathcal{A}$, $a_{22}([A,A'])=0$,
$c_{12}(AM)=a_{11}(A)M-Ma_{22}(A)+Ac_{12}(M)$ and $d_{21}(NA)=Na_{11}(A)-a_{22}(A)N+d_{21}(N)A$;

$\mathrm{(2)}$~~~$b_{22}$ is a Lie derivation on  $\mathcal{B}$, $b_{11}([B,B'])=0$,
$c_{12}(MB)=Mb_{22}(B)-b_{11}(B)M+c_{12}(M)B$ and $d_{21}(BN)=b_{22}(B)N-Nb_{11}(B)+Bd_{21}(N)$;

$\mathrm{(3)}$~~~$a_{11}(MN)=c_{12}(M)N+Md_{21}(N)+b_{11}(NM)$, $b_{22}(NM)=d_{21}(N)M+Nc_{12}(M)+a_{22}(MN).$\\
\end{proposition}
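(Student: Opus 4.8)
The plan is to prove the two implications separately, and to extract most of the forward direction directly from Lemma~\ref{202}. A Lie derivation is precisely a $(1,-1)$-derivable mapping at every point, in particular at $0$; since $|mn|=1$ when $(m,n)=(1,-1)$, the hypothesis $char(\mathcal U)\neq|mn|$ of Lemma~\ref{202} holds automatically. Hence $\delta$ already has the block form of that lemma, with linear maps $a_{ij},b_{ij},c_{12},c_{21},d_{12},d_{21}$ satisfying (i)--(iii). Specializing to $(m,n)=(1,-1)$, the two identities $mBa_{22}(A)+na_{22}(A)B=0$ and $mb_{11}(B)A+nAb_{11}(B)=0$ become commutativity relations, giving $a_{22}(\mathcal A)\subseteq Z(\mathcal B)$ and $b_{11}(\mathcal B)\subseteq Z(\mathcal A)$; the four $c_{12}$- and $d_{21}$-identities in (i),(ii) turn into the module-action identities of (1) and (2); and (iii) becomes (3). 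Thus only four assertions remain: that $a_{11},b_{22}$ are Lie derivations, that $a_{22},b_{11}$ kill commutators, and that $c_{21}=0=d_{12}$.

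The first three I would obtain from purely diagonal test elements. Applying $\delta([X,Y])=[\delta(X),Y]+[X,\delta(Y)]$ to $X=A\oplus0$, $Y=A'\oplus0$ and reading off blocks, the $(1,1)$-entry yields $a_{11}([A,A'])=[a_{11}(A),A']+[A,a_{11}(A')]$, the $(2,2)$-entry yields $a_{22}([A,A'])=0$, and the off-diagonal entries merely reproduce identities already established; the dual computation with $0\oplus B$, $0\oplus B'$ handles $b_{22}$ and $b_{11}$.

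The crux --- and the only place where $char(\mathcal U)\neq2$ is needed --- is the vanishing of $c_{21}$ and $d_{12}$. I would feed $X=\left[\begin{smallmatrix}0&M\\0&0\end{smallmatrix}\right]$ and the idempotent $Y=0\oplus I_{\mathcal B}$ into the Lie identity. Then $[X,Y]=X$, so the left side is simply $\delta(X)$, whose $(2,1)$-entry is $c_{21}(M)$; the $(2,1)$-entry of the right side computes to $-c_{21}(M)$. Hence $2c_{21}(M)=0$, and $char(\mathcal U)\neq2$ forces $c_{21}=0$. The symmetric choice $X=\left[\begin{smallmatrix}0&0\\N&0\end{smallmatrix}\right]$, $Y=I_{\mathcal A}\oplus0$ gives $2d_{12}(N)=0$, so $d_{12}=0$; this completes the forward implication and shows $\delta$ has exactly the stated form.

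For the converse I would assume $\delta$ has the displayed form with (1)--(3) and verify the Lie identity directly. As both sides are bilinear in $(X,Y)$, it suffices to test the four elementary block types $A\oplus0$, $0\oplus B$, $\left[\begin{smallmatrix}0&M\\0&0\end{smallmatrix}\right]$, $\left[\begin{smallmatrix}0&0\\N&0\end{smallmatrix}\right]$ against one another, and antisymmetry of the bracket roughly halves the work. Each pair collapses block by block onto one of the identities in (1)--(3) (e.g.\ the pair $\left[\begin{smallmatrix}0&M\\0&0\end{smallmatrix}\right],\left[\begin{smallmatrix}0&0\\N&0\end{smallmatrix}\right]$ reproduces exactly the two relations of (3), and diagonal against off-diagonal pairs give the module-action identities), together with the centrality of $a_{22}$ and $b_{11}$. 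I expect the main difficulty to be bookkeeping rather than ideas: one must spot the two commutators against the diagonal idempotents that isolate $2c_{21}$ and $2d_{12}$, and then carry out the converse's finite but lengthy block verification so that each of (1)--(3) is genuinely used.
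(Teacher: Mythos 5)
Your proposal is correct, but it takes a genuinely different route from the paper's for the forward direction. The paper proves necessity entirely from scratch, without invoking Lemma \ref{202}: it applies the full Lie identity entry-wise to a list of test pairs ($S=A\oplus 0$, $T=I_\mathcal{A}\oplus 0$ and $S=0\oplus B$, $T=I_\mathcal{A}\oplus 0$ to pin down $a_{12},a_{21},b_{12},b_{21}$ via $M_0,N_0$; diagonal pairs for the Lie-derivation and commutator-killing statements; diagonal elements against corner elements to get the module identities together with $c_{21}=0$ and $d_{12}=0$; $A\oplus 0$ against $0\oplus B$ for the centrality of $a_{22}$ and $b_{11}$; and the two corner types against each other for (3)), and it dismisses sufficiency as elementary. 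You instead observe that a Lie derivation is in particular a $(1,-1)$-derivable mapping at $0$ and import the block form and all of (i)--(iii) from Lemma \ref{202}; this is legitimate --- the lemma precedes the proposition, needs no faithfulness, and its hypothesis $char(\mathcal{U})\neq|mn|=1$ is vacuous --- and it correctly yields the centrality statements and all the identities in (1)--(3) upon setting $(m,n)=(1,-1)$. Your three supplementary computations check out: the diagonal pairs give that $a_{11},b_{22}$ are Lie derivations and $a_{22},b_{11}$ kill commutators, and the brackets of $\left[\begin{smallmatrix}0&M\\0&0\end{smallmatrix}\right]$ with $0\oplus I_\mathcal{B}$ and of $\left[\begin{smallmatrix}0&0\\N&0\end{smallmatrix}\right]$ with $I_\mathcal{A}\oplus 0$ do produce $2c_{21}(M)=0$ and $2d_{12}(N)=0$. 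What your route buys is a much shorter necessity proof and full transparency that $char(\mathcal{U})\neq 2$ enters only to annihilate $c_{21}$ and $d_{12}$, which is exactly where the paper uses it too; what it costs is dependence on the earlier lemma, where the paper's proposition is self-contained (it is stated as a structure result about Lie derivations, independent of the $(m,n)$ machinery). One small redundancy worth noting: Lemma \ref{202}(i),(ii) already contain $nc_{21}(AM)=mc_{21}(M)A$ and $nd_{12}(NA)=mAd_{12}(N)$, which at $(m,n)=(1,-1)$ with $A=I_\mathcal{A}$ give $2c_{21}=0$ and $2d_{12}=0$ at once, so your separate ``crux'' computation, though correct, is unnecessary. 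Finally, your sufficiency outline (bilinearity of the defect $\delta([X,Y])-[\delta(X),Y]-[X,\delta(Y)]$ reduces the check to the four elementary block types, each pair collapsing onto (1)--(3) plus the centrality of $a_{22}$ and $b_{11}$) is precisely the verification the paper leaves to the reader, and it is sound; you even go beyond the paper by spelling it out.
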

\begin{proof}
We just show the necessity, for the sufficiency can be achieved by elementary calculations.

We will consider the equation $\delta([S,T])=[\delta(S),T]+[S,\delta(T)]$ entry-wise.
Take $S=A\oplus 0$
and $T=I_\mathcal{A}\oplus 0$,
we have\\
\begin{eqnarray*}
0=\delta([S,T])=[\delta(S),T]+[S,\delta(T)]=
\left[\begin{array}{cc}Aa_{11}(I_\mathcal{A})-a_{11}(A) & Aa_{12}(I_\mathcal{A})-a_{12}(A) \\a_{21}(A)-a_{21}(I_\mathcal{A})A & 0\\\end{array}\right].
\end{eqnarray*}
So $a_{12}(A)=AM_0$ and $a_{21}(A)=N_0A$, where $M_0=a_{12}(I_\mathcal{A})$ and $N_0=a_{21}(I_\mathcal{A})$.
Similarly if we take $S=0 \oplus B$
and $T=I_\mathcal{A}\oplus 0$,
then we have $b_{12}(B)=-M_0B$ and $b_{21}(B)=-BN_0$.

For arbitrary $A,A^\prime\in \mathcal{A}$, setting $S=A \oplus 0$
and $T=A'\oplus 0$,
we obtain that $a_{11}$ is a Lie derivation on $\mathcal{A}$ and $a_{22}([A,A'])=0$.
Symmetrically, take $S=0\oplus B$
and $T=0\oplus B'$, we see that $b_{22}$ is a Lie derivation on $\mathcal{B}$ and $b_{11}([B,B'])=0$.

Taking $S=\left[\begin{array}{cc}A & 0 \\0 & 0\\\end{array}\right]$
and $T=\left[\begin{array}{cc}0 & M \\0 & 0\\\end{array}\right]$
yields $c_{11}(M)=-MN_0$, $c_{12}(AM)=a_{11}(A)M-Ma_{22}(A)+Ac_{12}(M)$, $c_{21}=0$ and $c_{22}(M)=N_0M.$
Moreover, taking $S=\left[\begin{array}{cc}0 & 0 \\N & 0\\\end{array}\right]$
and $T=\left[\begin{array}{cc}A & 0 \\0 & 0\\\end{array}\right]$
gives $d_{11}(N)=-M_0N$, $d_{12}=0$, $d_{21}(NA)=Na_{11}(A)-a_{22}(A)N+d_{21}(N)A$ and $d_{22}(N)=NM_0.$
Taking $S=\left[\begin{array}{cc}0 & M \\0 & 0\\\end{array}\right]$ and $T=\left[\begin{array}{cc}0 & 0 \\0 & B\\\end{array}\right]$ leads to $c_{12}(MB)=Mb_{22}(B)-b_{11}(B)M+c_{12}(M)B$. Taking $S=\left[\begin{array}{cc}0 & 0 \\0 & B\\\end{array}\right]$ and $T=\left[\begin{array}{cc}0 & 0 \\N & 0\\\end{array}\right]$ gives $d_{21}(BN)=b_{22}(B)N-Nb_{11}(B)+Bd_{21}(N)$. \

Furthermore, consider $S=A\oplus 0$ and $T=0\oplus B$, we obtain $[a_{22}(A),B]=0$ and $[A, b_{11}(B)]=0$. As A and B are arbitrary, we have $a_{22}(A)\in Z(\mathcal{B})$ and $b_{11}(B)\in Z(\mathcal{A})$.\

Finally, let $S=\left[\begin{array}{cc}0 & M \\0 & 0\\\end{array}\right]$ and $T=\left[\begin{array}{cc}0 & 0 \\N & 0\\\end{array}\right]$.
A simple calculation gives $a_{11}(MN)=c_{12}(M)N+Md_{21}(N)+b_{11}(NM)$ and $b_{22}(NM)=d_{21}(N)M+Nc_{12}(M)+a_{22}(MN).$
\end{proof}

With a proof similar to the proof of Proposition \ref{203}, we have the following result.

\begin{proposition}\label{204}
A linear mapping $\delta$ on $\mathcal{U}=\left[\begin{array}{cc}\mathcal{A} & \mathcal{M} \\ \mathcal{N} & \mathcal{B} \\ \end{array} \right]$
with $char(\mathcal{U})\neq 2$ is a Jordan derivation if and only if it is of the form
$$\delta\left(\left[\begin{array}{cc}A & M \\N & B \\\end{array}\right]\right)=\left[\begin{array}{cc}a_{11}(A)-MN_0-M_0N & AM_0-M_0B+c_{12}(M)+d_{12}(N) \\N_0A-BN_0+c_{21}(M)+d_{21}(N) & b_{22}(B)+N_0M+NM_0 \\\end{array}\right],$$
where $N_0\in\mathcal{N}$, $M_0\in\mathcal{M}$ and $a_{11}:\mathcal{A}\rightarrow\mathcal{A}$, $b_{22}:\mathcal{B}\rightarrow\mathcal{B}$,  $c_{12}:\mathcal{M}\rightarrow\mathcal{M}$, $d_{21}:\mathcal{N}\rightarrow\mathcal{N}$, $c_{21}:\mathcal{M}\rightarrow\mathcal{N}$, $d_{12}:\mathcal{N}\rightarrow\mathcal{M}$ are linear mappings satisfying

$\mathrm{(1)}$~~~$a_{11}$ is a Jordan derivation on  $\mathcal{A}$, $c_{12}(AM)=a_{11}(A)M+Ac_{12}(M)$,
$c_{21}(AM)=c_{21}(M)A$, $c_{12}(MB)=Mb_{22}(B)+c_{12}(M)B$, $c_{21}(MB)=Bc_{21}(M)$, $c_{21}(M)M=0$ and $Mc_{21}(M)=0$;

$\mathrm{(2)}$~~~$b_{22}$ is a Jordan derivation on  $\mathcal{B}$, $d_{21}(NA)=Na_{11}(A)+d_{21}(N)A$, $d_{12}(NA)=Ad_{12}(N)$,
$d_{21}(BN)=b_{22}(B)N+Bd_{21}(N)$, $d_{12}(BN)=d_{12}(N)B$, $d_{12}(N)N=0$ and $Nd_{12}(N)=0$;

$\mathrm{(3)}$~~~$a_{11}(MN)=c_{12}(M)N+Md_{21}(N)$, $b_{22}(NM)=d_{21}(N)M+Nc_{12}(M)$.

\end{proposition}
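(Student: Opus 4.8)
The plan is to prove necessity directly, since the author has already observed that sufficiency is a matter of routine verification; so I will assume $\delta$ is a Jordan derivation and recover the displayed form together with conditions (1)--(3). Throughout I will use the defining identity
\[
\delta(ST+TS)=\delta(S)T+S\delta(T)+\delta(T)S+T\delta(S)
\]
and evaluate it entry-wise on carefully chosen pairs of elementary matrices, exactly as in the proof of Proposition \ref{203}; the only structural change is that the symmetric product $ST+TS$, rather than the commutator, now drives the computation. The hypothesis $char(\mathcal{U})\neq 2$ will be invoked repeatedly to cancel factors of $2$.

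First I would fix the diagonal data. Taking $S=A\oplus 0$ and $T=I_\mathcal{A}\oplus 0$ (so $ST+TS=2(A\oplus 0)$) and comparing the four entries yields $a_{11}(I_\mathcal{A})=0$, $a_{22}\equiv 0$, and $a_{12}(A)=AM_0$, $a_{21}(A)=N_0A$ with $M_0=a_{12}(I_\mathcal{A})$, $N_0=a_{21}(I_\mathcal{A})$. Then $S=0\oplus B$, $T=I_\mathcal{A}\oplus 0$ (with $ST+TS=0$) forces $b_{11}\equiv 0$, $b_{12}(B)=-M_0B$, $b_{21}(B)=-BN_0$. Choosing $S=A\oplus 0$, $T=A'\oplus 0$ and then $S=0\oplus B$, $T=0\oplus B'$ shows that $a_{11}$ and $b_{22}$ are Jordan derivations on $\mathcal{A}$ and $\mathcal{B}$ respectively. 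Next I would feed in the mixed pairs: $S=A\oplus 0$ together with $E_{12}(M)$ (the matrix whose $(1,2)$-entry is $M$ and whose other entries vanish) produces $c_{12}(AM)=a_{11}(A)M+Ac_{12}(M)$ and $c_{21}(AM)=c_{21}(M)A$, while specializing $A=I_\mathcal{A}$ gives $c_{11}(M)=-MN_0$ and $c_{22}(M)=N_0M$; the symmetric pairs with $0\oplus B$ give the $B$-sided identities $c_{12}(MB)=Mb_{22}(B)+c_{12}(M)B$ and $c_{21}(MB)=Bc_{21}(M)$, and the analogous choices involving $E_{21}(N)$ (the matrix with $(2,1)$-entry $N$) produce the four $d$-relations of part (2) along with $d_{11}(N)=-M_0N$ and $d_{22}(N)=NM_0$. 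This accounts for the entire claimed form of $\delta$ and for all the one-sided module identities in (1) and (2).

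The genuinely new step---and the one I expect to be the crux---concerns $c_{21}$ and $d_{12}$. In the Lie setting of Proposition \ref{203} the antisymmetry of the bracket turns the $(2,1)$-relation into $c_{21}(AM)=-c_{21}(M)A$, and setting $A=I_\mathcal{A}$ collapses $c_{21}$ (and dually $d_{12}$) to zero. Under the symmetric product this cancellation is unavailable, so $c_{21}$ and $d_{12}$ survive and must instead be pinned down by quadratic constraints. I would obtain these by taking $S=T=E_{12}(M)$: since $E_{12}(M)^2=0$ the left side vanishes, so $\delta(S)S+S\delta(S)=0$, and reading off the $(1,1)$- and $(2,2)$-entries gives $Mc_{21}(M)=0$ and $c_{21}(M)M=0$; dually, $S=T=E_{21}(N)$ yields $Nd_{12}(N)=0$ and $d_{12}(N)N=0$. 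These are precisely the extra conditions in (1) and (2) that are absent from the Lie case, and obtaining them cleanly (rather than merely re-deriving $c_{21}=0$) is the only place where the argument departs substantively from that of Proposition \ref{203}.

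Finally, part (3) follows from $S=E_{12}(M)$, $T=E_{21}(N)$: here $ST+TS=(MN)\oplus(NM)$, and matching the $(1,1)$- and $(2,2)$-entries gives $a_{11}(MN)=c_{12}(M)N+Md_{21}(N)$ and $b_{22}(NM)=d_{21}(N)M+Nc_{12}(M)$, the off-diagonal entries being automatically consistent with the forms already found. Assembling all of these relations produces exactly the displayed form and conditions (1)--(3), completing the necessity direction; sufficiency is a direct substitution into the Jordan identity, which I would leave to routine calculation.
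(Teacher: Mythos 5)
Your proposal is correct and is essentially the paper's own route: the paper proves Proposition \ref{204} only via the remark that the argument is ``similar to the proof of Proposition \ref{203}'', and your entry-wise evaluation of the Jordan identity $\delta(ST+TS)=\delta(S)T+S\delta(T)+\delta(T)S+T\delta(S)$ on the same test pairs is precisely that adaptation, with all computations checking out. In particular, your use of the square-zero elements $S=T=E_{12}(M)$ and $S=T=E_{21}(N)$ to extract $Mc_{21}(M)=c_{21}(M)M=0$ and $Nd_{12}(N)=d_{12}(N)N=0$ correctly supplies the one step that has no analogue in the Lie case, where $c_{21}$ and $d_{12}$ simply vanish.
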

Since every derivation is a Lie derivation as well as a Jordan derivation, combining the propositions above yields the following corollary.

\begin{corollary}\label{205}
A linear mapping $\delta$ on $\mathcal{U}=\left[\begin{array}{cc}\mathcal{A} & \mathcal{M} \\ \mathcal{N} & \mathcal{B} \\ \end{array} \right]$
with $char(\mathcal{U})\neq 2$ is a derivation if and only if it is of the form\\
$$\delta\left(\left[\begin{array}{cc}A & M \\N & B \\\end{array}\right]\right)=\left[\begin{array}{cc}a_{11}(A)-MN_0-M_0N & AM_0-M_0B+c_{12}(M) \\N_0A-BN_0+d_{21}(N) & b_{22}(B)+N_0M+NM_0 \\\end{array}\right],$$
where $N_0\in\mathcal{N}$, $M_0\in\mathcal{M}$ and $a_{11}:\mathcal{A}\rightarrow\mathcal{A}$, $b_{22}:\mathcal{B}\rightarrow\mathcal{B}$, $c_{12}:\mathcal{M}\rightarrow\mathcal{M}$, $d_{21}:\mathcal{N}\rightarrow\mathcal{N}$ are linear mappings satisfying

$\mathrm{(1)}$~~~$a_{11}$ is a derivation on  $\mathcal{A}$, $c_{12}(AM)=a_{11}(A)M+Ac_{12}(M)$ and $d_{21}(NA)=Na_{11}(A)+d_{21}(N)A$;

$\mathrm{(2)}$~~~$b_{22}$ is a derivation on  $\mathcal{B}$, $c_{12}(MB)=Mb_{22}(B)+c_{12}(M)B$ and $d_{21}(BN)=b_{22}(B)N+Bd_{21}(N)$;

$\mathrm{(3)}$~~~$a_{11}(MN)=c_{12}(M)N+Md_{21}(N)$, $b_{22}(NM)=d_{21}(N)M+Nc_{12}(M)$.
\end{corollary}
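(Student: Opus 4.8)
The plan is to exploit the elementary fact that, over an algebra of characteristic not $2$, a linear map which is simultaneously a Lie derivation and a Jordan derivation is automatically a derivation. Writing the Jordan identity $\delta(ST+TS)=\delta(S)T+S\delta(T)+\delta(T)S+T\delta(S)$ together with the Lie identity $\delta(ST-TS)=\delta(S)T+S\delta(T)-\delta(T)S-T\delta(S)$ and adding the two gives $2\delta(ST)=2\delta(S)T+2S\delta(T)$; since $char(\mathcal{U})\neq 2$ we may cancel the $2$ to obtain $\delta(ST)=\delta(S)T+S\delta(T)$. I will invoke this observation both for the whole algebra $\mathcal{U}$ and for the corner algebras $\mathcal{A}$ and $\mathcal{B}$.

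For the necessity, I would first note that a derivation trivially satisfies both the Lie and the Jordan derivation identities, so $\delta$ is at once a Lie derivation and a Jordan derivation. Hence both Proposition \ref{203} and Proposition \ref{204} apply to the one fixed entrywise expansion of $\delta$. The Lie form of Proposition \ref{203} has no $c_{21}(M)$ term in its $(2,1)$-entry and no $d_{12}(N)$ term in its $(1,2)$-entry, forcing $c_{21}=0$ and $d_{12}=0$; dually the Jordan form of Proposition \ref{204} has no $b_{11}(B)$ term in its $(1,1)$-entry and no $a_{22}(A)$ term in its $(2,2)$-entry, forcing $b_{11}=0$ and $a_{22}=0$. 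With these four maps gone, the two forms collapse to the single form claimed in the corollary.

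It then remains to simplify the coefficient conditions. Proposition \ref{203}(1) tells me $a_{11}$ is a Lie derivation on $\mathcal{A}$ and Proposition \ref{204}(1) tells me it is a Jordan derivation on $\mathcal{A}$, so by the opening observation applied inside $\mathcal{A}$ the map $a_{11}$ is a derivation; symmetrically $b_{22}$ is a derivation on $\mathcal{B}$. Substituting $a_{22}=b_{11}=0$ into the module identities of Proposition \ref{203} reduces them to $c_{12}(AM)=a_{11}(A)M+Ac_{12}(M)$, $d_{21}(NA)=Na_{11}(A)+d_{21}(N)A$, $c_{12}(MB)=Mb_{22}(B)+c_{12}(M)B$ and $d_{21}(BN)=b_{22}(B)N+Bd_{21}(N)$, and trims the part (3) relations to $a_{11}(MN)=c_{12}(M)N+Md_{21}(N)$ and $b_{22}(NM)=d_{21}(N)M+Nc_{12}(M)$; the residual conditions of Proposition \ref{204} mentioning $c_{21}$ or $d_{12}$ become vacuous. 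These are precisely conditions (1)--(3).

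For the sufficiency I would run the argument backwards. Given $\delta$ of the stated form satisfying (1)--(3), I check that it fulfils the hypotheses of Proposition \ref{203} with $b_{11}=0$ and $a_{22}=0$ (using that a derivation is in particular a Lie derivation), whence $\delta$ is a Lie derivation, and likewise the hypotheses of Proposition \ref{204} with $c_{21}=0$ and $d_{12}=0$, whence $\delta$ is a Jordan derivation; being both, $\delta$ is a derivation by the opening observation applied to all of $\mathcal{U}$. I do not expect a genuine analytic or combinatorial obstacle here: the only point requiring care is the bookkeeping of the entry comparison, namely confirming that the objects $a_{11},b_{22},c_{12},d_{21},M_0,N_0$ furnished by the two propositions are literally identical, which holds because all arise from the same expansion of $\delta$. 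The entire weight of the corollary rests on the single identity ``Lie $+$ Jordan $=$ derivation.''
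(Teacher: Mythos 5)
Your proposal is correct and takes essentially the same route as the paper, whose entire proof is the one-line observation that every derivation is simultaneously a Lie derivation and a Jordan derivation, so that the corollary follows by intersecting the structural forms of Propositions \ref{203} and \ref{204}. You simply make explicit what the paper leaves implicit: the identity $2\delta(ST)=\delta(ST+TS)+\delta(ST-TS)$ together with $char(\mathcal{U})\neq 2$ yields the converse direction (both Lie and Jordan implies derivation, applied to $\mathcal{U}$ and to the corners for $a_{11}$, $b_{22}$), and the component maps appearing in the two propositions coincide because they arise from the same entrywise expansion of $\delta$.
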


Now we are in a position to prove our main theorem of this section.\
\leftline{}
\textbf{Proof of Theorem \ref{201}}~~~(1)~~~Assume that $(m+n)(m-n)\neq 0$.
By Lemma \ref{202} (i), we have $nc_{21}(AM)=mc_{21}(M)A$. Taking $A=I_\mathcal{A}$ gives $nc_{21}(M)=mc_{21}(M)$,
which implies $c_{21}(M)=0$ for every $M\in \mathcal{M}$.
Similarly, by $nd_{12}(NA)=mAd_{12}(N)$, we obtain $d_{12}(N)=0$ for every $N\in \mathcal{N}$.

Since $mBa_{22}(A)+na_{22}(A)B=0$, choosing $B=I_\mathcal{B}$ gives $a_{22}(A)=0$ for every $A\in \mathcal{A}$.
Hence $c_{12}(AM)=a_{11}(A)M+Ac_{12}(M)$, $d_{21}(NA)=Na_{11}(A)+d_{21}(N)A$ and $b_{22}(NM)=Nc_{12}(M)+d_{21}(N)M$
for all $A\in \mathcal{A}$, $M\in \mathcal{M}$ and $N\in \mathcal{N}$.

So for any $A_1, A_2\in \mathcal{A}$ and $M\in \mathcal{M}$,
\begin{eqnarray*}
c_{12}(A_1A_2M)&=&a_{11}(A_1A_2)M+A_1A_2c_{12}(M);\\
c_{12}(A_1A_2M)&=&a_{11}(A_1)A_2M+A_1c_{12}(A_2M)\\
&=&a_{11}(A_1)A_2M+A_1a_{11}(A_2)M+A_1A_2c_{12}(M).
\end{eqnarray*}
Thus $(a_{11}(A_1A_2)-A_1a_{11}(A_2)-a_{11}(A_1)A_2)M=0$. Since $\mathcal{M}$ is a faithful left $\mathcal{A}$-module, we have
\begin{eqnarray*}
a_{11}(A_1A_2)-A_1a_{11}(A_2)-a_{11}(A_1)A_2=0.
\end{eqnarray*}

Similarly, by Lemma \ref{202} (ii), we have $b_{11}(B)=0$ for every $B\in \mathcal{B}$. Hence
$c_{12}(MB)=Mb_{22}(B)+c_{12}(M)B$, $d_{21}(BN)=b_{22}(B)N+Bd_{21}(N)$ and $a_{11}(MN)=c_{12}(M)N+Md_{21}(N)$
for all $B\in \mathcal{B}$, $M\in \mathcal{M}$ and $N\in \mathcal{N}$.
Now combining Corollary \ref{205}, we complete the proof.\

(2)~~~Assume that $m+n=0$. Without loss of generality, we may assume that $m=1$ and $n=-1$.\

By the proof of (1), we have $c_{21}(M)=0$ for every $M\in \mathcal{M}$ and $d_{12}(N)=0$ for every $N\in \mathcal{N}$.
By Lemma \ref{202} (i), we have $Ba_{22}(A)=a_{22}(A)B$ for all $A\in \mathcal{A}$ and $B\in \mathcal{B}$,
which yields $a_{22}(A)\in \mathcal{Z}(\mathcal{B})$, the center of $\mathcal{B}$.
Since $$c_{12}(AM)=a_{11}(A)M-Ma_{22}(A)+Ac_{12}(M)$$
for all $A\in \mathcal{A}$ and $M\in \mathcal{M}$. Thus for any $A_1, A_2\in \mathcal{A}$ and $M\in \mathcal{M}$,
\begin{eqnarray*}
c_{12}(AA'M)&=&a_{11}(AA')M-Ma_{22}(AA')+AA'c_{12}(M),\\
c_{12}(AA'M)&=&a_{11}(A)A'M+Aa_{11}(A')M+AA'c_{12}(M)-AMa_{22}(A')-A'Ma_{22}(A),\\
c_{12}(A'AM)&=&a_{11}(A'A)M-Ma_{22}(A'A)+A'Ac_{12}(M),\\
c_{12}(A'AM)&=&a_{11}(A')AM+A'a_{11}(A)M+A'Ac_{12}(M)-A'Ma_{22}(A)-AMa_{22}(A'),
\end{eqnarray*}
whence
\begin{eqnarray*}
c_{12}([A,A']M)&=&([a_{11}(A),A']+[A,a_{11}(A')])M+[A,A']c_{12}(M),\\
c_{12}([A,A']M)&=&a_{11}([A,A'])M-Ma_{22}([A,A'])+[A,A']c_{12}(M).
\end{eqnarray*}
Since $\delta([\mathcal{A},\mathcal{A}])\bigcap \mathcal{B}=0$, we have $a_{22}$ vanishes on commutators,
which implies $$a_{11}([A,A'])M=([a_{11}(A),A']+[A,a_{11}(A')])M.$$
Since $\mathcal{M}$ is a faithful left $\mathcal{A}$-module, we have $a_{11}$ is a Lie derivation on $\mathcal{A}$.\

Similarly, by Lemma \ref{202} (ii), we have $b_{11}(B)\in \mathcal{Z}(\mathcal{A})$ and $b_{22}$ is a Lie derivation on $\mathcal{B}$.\

(3)~~~The proof when $m=n$ is analogous to (1) and we leave it to the readers.~~~~~~~~~~~~~~~~~~~~~~~~~~~~~~~~~~~~~~$\Box$

\begin{remark}
\emph{In Theorem \ref{201}, the assumption that $\mathcal{M}$ is a faithful ($\mathcal{A}$, $\mathcal{B}$)-bimodule
may be replaced by one of the following conditions:\\
(1)~~$\mathcal{N}$ is a faithful ($\mathcal{B}$, $\mathcal{A}$)-bimodule;\\
(2)~~$\mathcal{M}$ is a faithful left $\mathcal{A}$-module and  $\mathcal{N}$ is a faithful left $\mathcal{B}$-bimodule;\\
(3)~~$\mathcal{M}$ is a faithful right $\mathcal{B}$-module and  $\mathcal{N}$ is a faithful right $\mathcal{A}$-bimodule,\\
while the corresponding proofs need some minor modifications.}
\end{remark}

\begin{remark}
\emph{For the case $m=n$, $\delta$ may not be a derivation. For instance, let $\mathcal{A}$ and $\mathcal{B}$ be the algebras of $2\times2$
diagonal matrices over $\mathbb{C}$, $\mathcal{M}$ be the module of $2\times2$ matrices over $\mathbb{C}$ that vanishes on all but the $(1,1)$-entry, and $\mathcal{N}$ be the module of $2\times2$ matrices over $\mathbb{C}$ that vanishes on all but the $(2,2)$-entry. Let $\phi_{\mathcal{M}\mathcal{N}}$ and $\varphi_{\mathcal{N}\mathcal{M}}$ be the mappings that coincide with the usual matrix multiplication. Let
$\mathcal{U}=\left[  \begin{array}{cc}
    \mathcal{A} & \mathcal{M} \\
    \mathcal{N} & \mathcal{B} \\
  \end{array}
\right]$
be the generalized matrix algebra originated from the Morita context
$(\mathcal{A}, \mathcal{B}, \mathcal{M}, \mathcal{N}, \phi_{\mathcal{M}\mathcal{N}}, \varphi_{\mathcal{N}\mathcal{N}})$.
i.e. every element in $\mathcal{U}$ is of the form
$$\left[
  \begin{array}{cccc}
    a & 0 & m & 0 \\
    0 & a & 0 & 0 \\
    0 & 0 & b & 0 \\
    0 & n & 0 & b \\
  \end{array}
\right], $$
where $a, b, m, n\in\mathbb{C}$.
Now, let $\delta\left(\left[
  \begin{array}{cccc}
    a & 0 & m & 0 \\
    0 & a & 0 & 0 \\
    0 & 0 & b & 0 \\
    0 & n & 0 & b \\
  \end{array}
\right]\right)=
\left[
  \begin{array}{cccc}
    0 & 0 & 0 & 0 \\
    0 & 0 & 0 & 0 \\
    0 & 0 & 0 & 0 \\
    0 & m & 0 & 0 \\
  \end{array}
\right]$, then it is easy to verify that $\delta$ is a Jordan derivation but not a derivation. That is, $\delta$ is a proper Jordan derivation.}
\end{remark}

Note that a unital prime ring $\mathcal{A}$ with a non-trivial idempotent $P$ can be written as the matrix form
$\left[
  \begin{array}{cc}
    P\mathcal{A}P & P\mathcal{A}P^\perp \\
    P^\perp\mathcal{A}P & P^\perp\mathcal{A}P^\perp \\
  \end{array}
\right]$.
Moreover, for any $A\in \mathcal{A}$, $PAP\mathcal{A}(I-P)=0$ and $P\mathcal{A}(I-P)A(I-P)$ imply $PAP=0$ and $(I-P)A(I-P)=0$, respectively.
Note that every Jordan derivation of 2-torsion free prime
rings is a derivation(\cite{prime}). So the following corollary is immediate.

\begin{corollary}\label{206}
Let $m+n\neq 0$ and $\mathcal{A}$ be a unital prime ring with characteristic neither $|mn(m+n)|$ nor $|m-n|$.
Assume that $\mathcal{A}$ contains a non-trivial idempotent $P$.
If $\delta$ is an (\textit{m, n})-derivable mapping at 0 from $\mathcal{A}$ into itself, then $\delta$ is a derivation.
\end{corollary}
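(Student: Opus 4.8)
The plan is to present $\mathcal{A}$ as a generalized matrix algebra and then read off the conclusion from Theorem \ref{201}, using the prime-ring fact cited just above to absorb the one residual case. First I would use the non-trivial idempotent $P$ (and $P^\perp=I-P$) to write
$$\mathcal{A}=\left[\begin{array}{cc} P\mathcal{A}P & P\mathcal{A}P^\perp \\ P^\perp\mathcal{A}P & P^\perp\mathcal{A}P^\perp \end{array}\right],$$
so that $\mathcal{A}$ becomes a generalized matrix algebra $\mathcal{U}$ with diagonal algebras $\mathcal{A}'=P\mathcal{A}P$, $\mathcal{B}'=P^\perp\mathcal{A}P^\perp$ and off-diagonal bimodules $\mathcal{M}=P\mathcal{A}P^\perp$, $\mathcal{N}=P^\perp\mathcal{A}P$, the bimodule products being induced by the multiplication of $\mathcal{A}$. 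Under this identification $\mathcal{U}=\mathcal{A}$ and $\delta$ is an $(m,n)$-derivable mapping at $0$ on $\mathcal{U}$, so Theorem \ref{201} is available once its hypotheses are checked.

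The first thing to verify is the faithfulness of $\mathcal{M}$. I claim $\mathcal{M}=P\mathcal{A}P^\perp$ is a faithful $(\mathcal{A}',\mathcal{B}')$-bimodule. If $PAP\in\mathcal{A}'$ annihilates $\mathcal{M}$ on the left, then $PAP\mathcal{A}P^\perp=0$, and the primeness implication recorded just before the corollary forces $PAP=0$; symmetrically, if $P^\perp BP^\perp\in\mathcal{B}'$ annihilates $\mathcal{M}$ on the right, then $P\mathcal{A}P^\perp BP^\perp=0$, whence $P^\perp BP^\perp=0$. This is exactly the content those two primeness implications were stated to supply, so the bimodule hypothesis of Theorem \ref{201} holds.

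It then remains to match the characteristic conditions and to split on the sign of $m-n$. Write $p=|mn(m+n)|$ and $q=|m-n|$, so that the hypotheses read $char(\mathcal{A})\neq p$ and $char(\mathcal{A})\neq q$. If $m\neq n$, then $(m+n)(m-n)\neq 0$ and $pq=|mn(m+n)(m-n)|$; from $pqA=0$ one gets $p(qA)=0$, hence $qA=0$ by $char(\mathcal{A})\neq p$, hence $A=0$ by $char(\mathcal{A})\neq q$, so $char(\mathcal{A})\neq|mn(m+n)(m-n)|$ and Theorem \ref{201}(1) gives directly that $\delta$ is a derivation. If instead $m=n$, then $p=|2m^3|$ and, since both $2$ and $|2m|$ divide $2m^3$, the condition $char(\mathcal{A})\neq p$ propagates to $char(\mathcal{A})\neq 2$ and $char(\mathcal{A})\neq|2m|$; Theorem \ref{201}(3) then makes $\delta$ a Jordan derivation, whereupon the cited fact that every Jordan derivation of a $2$-torsion free prime ring is a derivation completes the proof.

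Since the authors call the corollary immediate, I expect no serious obstacle; the argument is bookkeeping built on Theorem \ref{201}. The two places deserving attention are the faithfulness check (which is precisely what the primeness remark provides) and the passage from the two separate torsion conditions to the single torsion condition each branch of Theorem \ref{201} requires. The $m=n$ branch is the only point where Theorem \ref{201} by itself yields merely a Jordan derivation, so it is there that the extra prime-ring input from \cite{prime} is genuinely needed.
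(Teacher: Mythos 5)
Your proposal is correct and takes essentially the same route as the paper: the authors' proof consists precisely of the Peirce decomposition $\mathcal{A}=\left[\begin{smallmatrix} P\mathcal{A}P & P\mathcal{A}P^{\perp} \\ P^{\perp}\mathcal{A}P & P^{\perp}\mathcal{A}P^{\perp}\end{smallmatrix}\right]$, the faithfulness of $P\mathcal{A}P^{\perp}$ extracted from primeness exactly as you argue, an appeal to Theorem~\ref{201}(1) when $m\neq n$ and to Theorem~\ref{201}(3) together with Herstein's theorem \cite{prime} when $m=n$. Your explicit torsion bookkeeping (that $p$- and $q$-torsion-freeness give $pq$-torsion-freeness, and that torsion-freeness passes to divisors, yielding $char(\mathcal{A})\neq 2$ and $char(\mathcal{A})\neq|2m|$ from $char(\mathcal{A})\neq|2m^{3}|$) correctly fills in the details the authors suppress by calling the corollary immediate.
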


As von Neumann algebras have rich idempotent elements and factor von Neumann algebras are prime, the following corollary is obvious.
\begin{corollary}\label{207}
Let $\mathcal{A}$ be a factor von Neumann algebra. If $\delta$ is an (m, n)-derivable mapping at 0 from $\mathcal{A}$ into itself with $m+n\neq 0$, then $\delta$ is a derivation.
\end{corollary}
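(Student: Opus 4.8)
The plan is to derive this statement directly from Corollary \ref{206} by verifying that a factor von Neumann algebra $\mathcal{A}$ meets all of its hypotheses: it is a unital prime ring, it contains a non-trivial idempotent, and its characteristic is neither $|mn(m+n)|$ nor $|m-n|$.

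First I would check primeness. Since $\mathcal{A}$ is a factor its center is $\mathbb{C}I$, so the only central projections are $0$ and $I$, and the central carrier $c(X)$ of any nonzero $X$ equals $I$. Invoking the standard von Neumann algebra identity that $A\mathcal{A}B=0$ forces $c(A)c(B)=0$, I would conclude that if $A,B\neq 0$ then $c(A)c(B)=I\neq 0$, a contradiction; hence $A\mathcal{A}B=0$ implies $A=0$ or $B=0$, i.e. $\mathcal{A}$ is prime. (Equivalently one may simply quote the well-known fact that a von Neumann algebra is prime precisely when it is a factor.) Next, as $\mathcal{A}$ is an algebra over $\mathbb{C}$, any relation $kX=0$ with $k$ a nonzero integer gives $X=k^{-1}(kX)=0$, so $\mathcal{A}$ has characteristic $0$; in particular its characteristic differs from every positive integer, and thus from $|mn(m+n)|$ and (when $m\neq n$) from $|m-n|$. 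Finally, provided $\mathcal{A}$ is not the one-dimensional factor $\mathbb{C}$, it carries a non-trivial projection, which supplies the required idempotent $P$. With all hypotheses met, Corollary \ref{206} yields that $\delta$ is a derivation.

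The single point needing extra care is the case $m=n$, where $|m-n|=0$ makes the characteristic clause of Corollary \ref{206} degenerate. There I would argue directly as in the proof of that corollary: the Peirce decomposition of $\mathcal{A}$ relative to $P$ realizes it as a generalized matrix algebra with faithful off-diagonal bimodules (faithfulness following from primeness), so Theorem \ref{201}(3) shows that $\delta$ is a Jordan derivation, and since $\mathcal{A}$ is a $2$-torsion-free prime ring every Jordan derivation is a derivation \cite{prime}. I expect the justification of primeness (together with this small $m=n$ bookkeeping) to be the only steps requiring attention; everything else is immediate, as the authors' remark anticipates.
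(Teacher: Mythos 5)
Your proposal matches the paper's route exactly: the paper derives this corollary from Corollary \ref{206} via the one-line observation that von Neumann algebras have rich idempotents and factor von Neumann algebras are prime, which is precisely your plan. Your extra bookkeeping --- verifying characteristic $0$ for a complex algebra, handling the degenerate $|m-n|=0$ clause when $m=n$ through Theorem \ref{201}(3) together with Herstein's theorem \cite{prime} (the same mechanism the paper uses to justify Corollary \ref{206} itself), and excluding the one-dimensional factor $\mathbb{C}$, where the statement would in fact fail for lack of a non-trivial idempotent --- simply makes explicit what the paper declares obvious.
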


Obviously, when $\mathcal{N}=0$, $\mathcal{U}$ degenerates to an upper triangular algebra. By \cite{JDOTA},
each Jordan derivation of an upper triangular algebra
is a derivation. Thus we have the following corollary, which generalizes \cite[Theorem 2.1]{ZHUJUN}.
\begin{corollary}\label{208}
Let $\mathcal{U}=Tri(\mathcal{A},\mathcal{M},\mathcal{B})$ be an upper triangular algebra
such that $\mathcal{M}$ is a \textit{faithful} ($\mathcal{A}$,$\mathcal{B}$)-bimodule.
Let $\delta$ be an (m, n)-derivable mapping at 0 from $\mathcal{U}$ into itself.\

$\mathrm{(1) }$~~~If $(m+n)(m-n)\neq 0$ and $char(\mathcal{U})\neq |mn(m+n)(m-n)|$, then $\delta$ is a derivation.

$\mathrm{(2) }$~~~If $m-n=0$ and $char(\mathcal{U})\neq |2m|$, then $\delta$ is a derivation.

$\mathrm{(3)}$~~~If $m+n=0$,
$\delta([\mathcal{A},\mathcal{A}])\bigcap \mathcal{B}=0$, $\delta([\mathcal{B},\mathcal{B}])\bigcap \mathcal{A}=0$ and $char(\mathcal{U})\neq |2m|$,
then $\delta$ is a Lie derivation.
\end{corollary}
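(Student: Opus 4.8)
The plan is to recognize the upper triangular algebra $\mathcal{U}=Tri(\mathcal{A},\mathcal{M},\mathcal{B})$ as the special case $\mathcal{N}=0$ of the generalized matrix algebra treated in Theorem \ref{201}, and then, in the critical case $m=n$, to upgrade the Jordan-derivation conclusion to a derivation conclusion by invoking the structural result of \cite{JDOTA}. Since $\mathcal{M}$ is assumed to be a faithful $(\mathcal{A},\mathcal{B})$-bimodule, every hypothesis needed to apply Theorem \ref{201} is in force after setting $\mathcal{N}=0$; the only genuinely new ingredient beyond Theorem \ref{201} is the cited fact that a Jordan derivation of an upper triangular algebra is automatically a derivation.

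For part (1), I would simply observe that the assumptions $(m+n)(m-n)\neq 0$ and $char(\mathcal{U})\neq |mn(m+n)(m-n)|$ are verbatim the hypotheses of Theorem \ref{201}(1). Applying that theorem to $\mathcal{U}$ with $\mathcal{N}=0$ yields at once that $\delta$ is a derivation. Likewise, for part (3) the hypotheses $m+n=0$, $\delta([\mathcal{A},\mathcal{A}])\cap\mathcal{B}=0$, $\delta([\mathcal{B},\mathcal{B}])\cap\mathcal{A}=0$ and $char(\mathcal{U})\neq|2m|$ coincide with those of Theorem \ref{201}(2), so $\delta$ is a Lie derivation with no further work.

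The content specific to the upper triangular setting appears only in part (2). When $m-n=0$ and $char(\mathcal{U})\neq|2m|$, Theorem \ref{201}(3) first gives that $\delta$ is a Jordan derivation. At this point I would invoke \cite{JDOTA}, that every Jordan derivation of an upper triangular algebra is a derivation. Combining the two conclusions, $\delta$ is a derivation, which is the assertion of (2) and is strictly stronger than the Jordan-derivation conclusion available for a general generalized matrix algebra (as the Remark's example shows, the latter cannot in general be improved).

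The only points requiring care, and hence the main (modest) obstacle, are the bookkeeping checks that the characteristic hypotheses stated here are strong enough both to feed Theorem \ref{201} and to feed \cite{JDOTA}. In particular, $char(\mathcal{U})\neq|2m|$ implies $char(\mathcal{U})\neq 2$, since $2A=0$ forces $|2m|A=|m|(2A)=0$ and hence $A=0$; this is exactly the $2$-torsion-freeness that the result of \cite{JDOTA} requires. Similarly $char(\mathcal{U})\neq|mn(m+n)(m-n)|$ absorbs the condition $char(\mathcal{U})\neq|mn|$ underlying Lemma \ref{202}, because $|mn|A=0$ would force $|mn(m+n)(m-n)|A=|(m+n)(m-n)|\,(|mn|A)=0$. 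Finally, one should note that faithfulness of $\mathcal{M}$ as an $(\mathcal{A},\mathcal{B})$-bimodule is unaffected by setting $\mathcal{N}=0$, so the degeneration to the triangular case introduces no loss of hypotheses and the specialization of Theorem \ref{201} is legitimate.
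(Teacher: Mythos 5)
Your proposal is correct and matches the paper's own justification exactly: the paper derives Corollary \ref{208} by specializing Theorem \ref{201} to $\mathcal{N}=0$ and, in the case $m=n$, upgrading the Jordan-derivation conclusion to a derivation via the cited result of \cite{JDOTA} that every Jordan derivation of a (2-torsion free) upper triangular algebra is a derivation. Your additional torsion bookkeeping (that $char(\mathcal{U})\neq|2m|$ yields the 2-torsion-freeness needed for \cite{JDOTA}, and that $char(\mathcal{U})\neq|mn(m+n)(m-n)|$ subsumes the $char(\mathcal{U})\neq|mn|$ hypothesis of Lemma \ref{202}) is a correct and slightly more careful rendering of details the paper leaves implicit.
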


Let $\mathcal{N}$ be a nest on $H$ and alg$\mathcal{N}$ be the associated algebra.
If $\mathcal{N}$ is trivial, then alg$\mathcal{N}$ is $B(H)$. If $\mathcal{N}$ is nontrivial, take a nontrivial projection $P\in \mathcal{N}$.
Let $\mathcal{A}=P\mathrm{alg}\mathcal{N}P$, $\mathcal{M}=P\mathrm{alg}\mathcal{N}(I-P)$ and $\mathcal{B}=(I-P)\mathrm{alg}\mathcal{N}(I-P)$.
Then $\mathcal{M}$ is a faithful ($\mathcal{A}$, $\mathcal{B}$)-bimodule, and alg$\mathcal{N}$=Tri($\mathcal{A}$, $\mathcal{M}$, $\mathcal{B}$)
is an upper triangular algebra.
Thus as an application of Corollary \ref{207} and Corollary \ref{208}, we have the following corollary.
\begin{corollary}\label{209}
Let $\mathcal{N}$ be a nest on a Hilbert space $H$ and alg$\mathcal{N}$ be the associated algebra.
If $\delta$ is an (m, n)-derivable mapping at 0 from $alg\mathcal{N}$ into itself with $m+n\neq 0$, then $\delta$ is a derivation.
\end{corollary}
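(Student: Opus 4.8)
The plan is to reduce the statement to the two structural results already established, namely Corollary \ref{207} for factor von Neumann algebras and Corollary \ref{208} for upper triangular algebras, by splitting the argument according to whether the nest $\mathcal{N}$ is trivial. No new computation is needed; the whole content lies in recognizing which previous corollary governs each case.

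First I would treat the trivial nest. If $\mathcal{N}=\{(0),H\}$, then $\mathrm{alg}\mathcal{N}=B(H)$, which is a factor von Neumann algebra (a type I factor). Since $m+n\neq 0$ by hypothesis, Corollary \ref{207} applies verbatim and yields that $\delta$ is a derivation.

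Next I would handle the nontrivial nest. Fixing a nontrivial projection $P\in\mathcal{N}$, the paragraph preceding the statement records that, on setting $\mathcal{A}=P\,\mathrm{alg}\mathcal{N}\,P$, $\mathcal{M}=P\,\mathrm{alg}\mathcal{N}\,(I-P)$ and $\mathcal{B}=(I-P)\,\mathrm{alg}\mathcal{N}\,(I-P)$, the algebra $\mathrm{alg}\mathcal{N}$ becomes the upper triangular algebra $\mathrm{Tri}(\mathcal{A},\mathcal{M},\mathcal{B})$ with $\mathcal{M}$ a faithful $(\mathcal{A},\mathcal{B})$-bimodule, so Corollary \ref{208} is available. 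Because $m+n\neq 0$, exactly one of two sub-cases occurs: either $m\neq n$, so that $(m+n)(m-n)\neq 0$ and part (1) of Corollary \ref{208} gives that $\delta$ is a derivation; or $m=n$, so that part (2) of Corollary \ref{208} applies and again gives a derivation. The excluded possibility $m+n=0$ in part (3), which would only produce a Lie derivation, never arises here.

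The only point requiring attention—a bookkeeping remark rather than a genuine obstacle—is the verification of the characteristic hypotheses in the cited corollaries. Since $\mathrm{alg}\mathcal{N}$ is an algebra over $\mathbb{C}$, for any nonzero integer $k$ the relation $kA=0$ forces $A=0$; hence $char(\mathrm{alg}\mathcal{N})$ differs from every positive integer, and in particular from $|mn(m+n)(m-n)|$ and from $|2m|$. Thus the characteristic conditions in Corollary \ref{208}, and (through Corollary \ref{206}) in Corollary \ref{207}, are satisfied automatically, and the two cases together complete the proof.
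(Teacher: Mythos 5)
Your proposal is correct and matches the paper's own route exactly: the paragraph preceding the corollary performs the same case split, handling a trivial nest via $B(H)$ and Corollary \ref{207}, and a nontrivial nest via the decomposition $\mathrm{alg}\mathcal{N}=Tri(\mathcal{A},\mathcal{M},\mathcal{B})$ with $\mathcal{M}$ faithful and Corollary \ref{208}, parts (1) and (2) covering $m\neq n$ and $m=n$ respectively. Your explicit check that the characteristic hypotheses hold automatically for complex algebras is a detail the paper leaves tacit, but it introduces no deviation in approach.
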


\section{(\textit{m, n})-derivable mappings at $I_\mathcal{A}\oplus 0$}

\

In this section, we study (\textit{m, n})-derivable mappings at $I_\mathcal{A}\oplus 0$.

\begin{theorem}\label{301}
Let $\mathcal{U}=\left[
  \begin{array}{cc}
    \mathcal{A} & \mathcal{M} \\
    \mathcal{N} & \mathcal{B} \\
  \end{array}
\right]$ be a generalized matrix algebra and $\delta$ be an $(m, n)$-derivable mapping at $I_\mathcal{A}\oplus 0$ from $\mathcal{U}$ into itself.
Suppose that for every $A$ in $\mathcal{A}$, there exists an integer $k$ such that $kI_\mathcal{A}+A$ is invertible in $\mathcal{A}$.
Assume that $\mathcal{M}$ is a faithful $(\mathcal{A}, \mathcal{B})$-bimodule.

$\mathrm{(1) }$~~~If $(m+n)(m-n)\neq 0$ and $char(\mathcal{U})\neq |mn(m+n)(m-n)|$, then $\delta$ is a derivation.

$\mathrm{(2)}$~~~If $m+n=0$, $\delta([\mathcal{A},\mathcal{A}])\bigcap \mathcal{B}=0$, $\delta([\mathcal{B},\mathcal{B}])\bigcap \mathcal{A}=0$ and
$char(\mathcal{U})\neq |2m|$, then $\delta$ is a Lie derivation.

$\mathrm{(3)}$~~~If $m-n=0$ and $char(\mathcal{U})\neq |2m|$, then $\delta$ is a Jordan derivation.
\end{theorem}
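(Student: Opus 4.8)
The plan is to reduce the problem at $I_\mathcal{A}\oplus 0$ to the already-solved problem at $0$ (Theorem \ref{201}), exploiting the invertibility hypothesis to manufacture many factorizations of $I_\mathcal{A}\oplus 0$. First I would record what the defining identity gives when we plug in the natural product decompositions of $I_\mathcal{A}\oplus 0$. The most useful ones are $(A\oplus 0)(A'\oplus 0)=I_\mathcal{A}\oplus 0$ whenever $AA'=I_\mathcal{A}$ in $\mathcal{A}$, and more generally pairs $S,T\in\mathcal{U}$ with $ST=I_\mathcal{A}\oplus 0$. The invertibility condition (for every $A$ some $kI_\mathcal{A}+A$ is invertible) is exactly what lets me write an arbitrary $A\in\mathcal{A}$ as a difference $A=(kI_\mathcal{A}+A)-kI_\mathcal{A}$ of invertibles, so that identities proved for invertible arguments extend by linearity of $\delta$ to all of $\mathcal{A}$.

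The core idea is to show that $\delta':=\delta-\delta(I)\cdot(-)$, or rather an appropriate modification of $\delta$ that kills the value on the idempotent $P=I_\mathcal{A}\oplus 0$, becomes an $(m,n)$-derivable mapping at $0$, to which Theorem \ref{201} applies verbatim. Concretely, I would first extract the entrywise form of $\delta(P)$ by feeding in factorizations $ST=P$ with $S,T$ supported in the various corners, mimicking Step 1 of Lemma \ref{202}: take $S=(A\oplus 0)$, $T=(A'\oplus 0)$ with $AA'=I_\mathcal{A}$, and separately use elements with nonzero $\mathcal{M},\mathcal{N}$ entries to pin down the off-diagonal components of $\delta(P)$ and the cross-relations among the linear maps $a_{ij},b_{ij},c_{ij},d_{ij}$. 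Running $A=I_\mathcal{A}$ (so $A'=I_\mathcal{A}$) identifies $\delta(P)$ itself. Once $\delta(P)$ is understood, I would define $\hat\delta(X)=\delta(X)-[\delta(P),\,?]$ — more precisely subtract the inner perturbation determined by the $\mathcal{M},\mathcal{N}$-components of $\delta(P)$ and the diagonal correction forced by the center conditions — engineered so that $\hat\delta(P)=0$ and $\hat\delta$ satisfies the $(m,n)$-derivable identity at $0$.

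The key algebraic maneuver is the standard trick for passing from a point $Z$ to $0$: if $ST=0$, then for a suitable invertible-type modification $(S+\lambda P)$ and $(T+\mu P)$ one gets products equal to $P$, and subtracting the resulting instances of the $(m,n)$-identity cancels the $\delta(P)$ contributions and leaves precisely the identity $m\hat\delta(ST)+n\hat\delta(TS)=m\hat\delta(S)T+mS\hat\delta(T)+n\hat\delta(T)S+nT\hat\delta(S)$ at $ST=0$. After that, each of the three cases (1)–(3) follows by quoting the corresponding case of Theorem \ref{201} applied to $\hat\delta$, and then transferring the conclusion back to $\delta$: since $\hat\delta$ differs from $\delta$ by an inner derivation (which is simultaneously a derivation, a Jordan derivation, and a Lie derivation), $\delta$ inherits the same structural conclusion in each regime. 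The arithmetic side-conditions on $char(\mathcal{U})$ and the sign constraints $(m+n)(m-n)\neq 0$, $m+n=0$, $m-n=0$ are used only through the invocation of Theorem \ref{201}, so nothing new is needed there.

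The main obstacle I anticipate is the bookkeeping in the reduction step: unlike the point $0$, the idempotent $P=I_\mathcal{A}\oplus 0$ interacts nontrivially with the corner decomposition, so one cannot simply use $S,T$ supported in off-diagonal corners — the products $ST$ and $TS$ must be computed carefully using $MN=\phi(M,N)$ and $NM=\varphi(N,M)$, and the invertibility hypothesis has to be deployed at exactly the right moment to upgrade relations from invertible $A$ to all $A\in\mathcal{A}$. The delicate point is verifying that the correction term producing $\hat\delta$ is well-defined (the $\mathcal{M}$- and $\mathcal{N}$-components $M_0,N_0$ it involves are forced consistently by the center conditions $\delta([\mathcal{A},\mathcal{A}])\cap\mathcal{B}=0$ etc. in case (2)) and that $\hat\delta$ genuinely annihilates $P$; granting that, the faithfulness of $\mathcal{M}$ does the remaining work exactly as in Theorem \ref{201}.
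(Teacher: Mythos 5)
Your scaffolding at the edges is sound and matches the paper: the hypothesis that some $kI_\mathcal{A}+A$ is invertible is indeed used exactly as you say, to prove identities for invertible arguments and extend them to all of $\mathcal{A}$ by linearity, and the endgame (faithfulness of $\mathcal{M}$ plus the structure results behind Theorem \ref{201}) is the paper's endgame too. But the pivotal step of your plan --- converting the identity at $P=I_\mathcal{A}\oplus 0$ into an identity at $0$ for a corrected map $\hat\delta$ --- has a genuine gap, and it is not a repairable bookkeeping issue. You claim that for $ST=0$ a ``suitable invertible-type modification'' $(S+\lambda P)$, $(T+\mu P)$ has product $P$. Computing, $(S+\lambda P)(T+\mu P)=\mu SP+\lambda PT+\lambda\mu P$, and no choice of scalars makes this equal $P$ in general: already for the pair $S=\left[\begin{smallmatrix}0&M\\0&B\end{smallmatrix}\right]$, $T=\left[\begin{smallmatrix}A&0\\0&0\end{smallmatrix}\right]$ (precisely the pair that drives Step 1 of Lemma \ref{202}) one has $SP=0$, so the product is $\lambda(A+\mu I_\mathcal{A})\oplus 0$, which equals $P$ only when $A$ is a scalar multiple of $I_\mathcal{A}$. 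Thus the factorizations of $P$ you can manufacture from a zero product never cover the test pairs needed to verify that $\hat\delta$ is $(m,n)$-derivable at $0$, and Theorem \ref{201} cannot be quoted. Note also that the $(m,n)$-identity involves $\delta(TS)$ alongside $\delta(ST)$; when $ST=P$ the element $TS$ varies over many values, so even a successful cancellation of $\delta(P)$-terms would leave uncontrolled $\delta(TS)$-contributions that your subtraction scheme never addresses. (Subtracting an inner derivation to arrange $\hat\delta(P)=0$ is harmless --- a genuine derivation satisfies the full $(m,n)$-identity, so $\hat\delta$ is again $(m,n)$-derivable at $P$ --- but that is all it gives: derivability at $P$, not at $0$.)

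What the paper actually does is not a reduction but a rerun: Lemma \ref{302} reproves the structure lemma directly at $P$ by choosing factorizations of $I_\mathcal{A}\oplus 0$ itself, namely $S=A\oplus B$, $T=A^{-1}\oplus 0$ for invertible $A$; then $S=\left[\begin{smallmatrix}A&AM\\0&0\end{smallmatrix}\right]$, $T=\left[\begin{smallmatrix}A^{-1}&-MB\\0&B\end{smallmatrix}\right]$; then $S=\left[\begin{smallmatrix}I_\mathcal{A}-MN&M\\0&0\end{smallmatrix}\right]$, $T=\left[\begin{smallmatrix}I_\mathcal{A}&0\\N&0\end{smallmatrix}\right]$; and their mirror images. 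These yield relations of the same shape as in Lemma \ref{202}, though not identical (e.g., $mc_{21}(AM)=nc_{21}(M)A$ in place of $nc_{21}(AM)=mc_{21}(M)A$), after which the proof of Theorem \ref{201} is repeated verbatim with Lemma \ref{302} in place of Lemma \ref{202}. A smaller inaccuracy in your case (2): the hypothesis $\delta([\mathcal{A},\mathcal{A}])\cap\mathcal{B}=0$ has nothing to do with the consistency of the correction terms $M_0,N_0$ (those come for free from $a_{12}(A)B=-Ab_{12}(B)$ and its mirror); it is what forces $a_{22}$ to vanish on commutators, so that faithfulness of $\mathcal{M}$ can upgrade $a_{11}$ to a Lie derivation.
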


We proceed with the following lemma.

\begin{lemma}\label{302}
Let $\mathcal{U}=\left[
  \begin{array}{cc}
    \mathcal{A} & \mathcal{M} \\
    \mathcal{N} & \mathcal{B} \\
  \end{array}
\right]$ be a generalized matrix algebra with $char(\mathcal{U})\neq |mn|$ and $\delta$ be an $(m, n)$-derivable mapping at $I_\mathcal{A}\oplus 0$ from $\mathcal{U}$ into itself.
Suppose that for every $A$ in $\mathcal{A}$, there exists an integer $k$ such that $kI_\mathcal{A}+A$ is invertible in $\mathcal{A}$.
Then
$$\delta\left(\left[\begin{array}{cc}A & M \\N & B \\\end{array}\right]\right)=
\left[\begin{array}{cc}a_{11}(A)+b_{11}(B)-MN_0-M_0N & AM_0-M_0B+c_{12}(M)+d_{12}(N)
\\N_0A-BN_0+c_{21}(M)+d_{21}(N) & a_{22}(A)+b_{22}(B)+N_0M+NM_0 \\\end{array}\right],$$
where $N_0\in\mathcal{N}$, $M_0\in\mathcal{M}$ and $a_{11}:\mathcal{A}\rightarrow\mathcal{A}$,
$a_{22}:\mathcal{A}\rightarrow\mathcal{B}$, $b_{11}:\mathcal{B}\rightarrow\mathcal{A}$,
$b_{22}:\mathcal{B}\rightarrow\mathcal{B}$, $c_{12}:\mathcal{M}\rightarrow\mathcal{M}$,
$c_{21}:\mathcal{M}\rightarrow\mathcal{N}$, $d_{12}:\mathcal{N}\rightarrow\mathcal{M}$,
$d_{21}:\mathcal{N}\rightarrow\mathcal{N}$ are linear mappings satisfying
\begin{eqnarray*}
\mathrm{(i)}&&c_{12}(AM)=Ma_{22}(A)+a_{11}(A)M+Ac_{12}(M),~mc_{21}(AM)=nc_{21}(M)A,\\
&&d_{21}(NA)=a_{22}(A)N+Na_{11}(A)+d_{21}(N)A,~md_{12}(NA)=nAd_{12}(N),\\
&&mBa_{22}(A)+na_{22}(A)B=0;\\
\mathrm{(ii)}&&mc_{12}(MB)=nb_{11}(B)M+mMb_{22}(B)+mc_{12}(M)B,~mc_{21}(MB)=nBc_{21}(M),\\
&&md_{21}(BN)=nNb_{11}(B)+mb_{22}(B)N+mBd_{21}(N),~md_{12}(BN)=nd_{12}(N)B,\\
&&mb_{11}(B)A+nAb_{11}(B)=0;\\
\mathrm{(iii)}&&a_{11}(MN)=c_{12}(M)N+Md_{21}(N)+b_{11}(NM),~b_{22}(NM)=Nc_{12}(M)+d_{21}(N)M+a_{22}(MN).
\end{eqnarray*}
\end{lemma}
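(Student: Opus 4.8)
The plan is to mimic the two-step structure of the proof of Lemma \ref{202}, but now exploiting the fact that $\delta$ is $(m,n)$-derivable at $I_\mathcal{A}\oplus 0$ rather than at $0$. First I would record the general matrix form of $\delta$ coming from linearity, writing each entry via the linear mappings $a_{ij}, b_{ij}, c_{ij}, d_{ij}$ as displayed right after Theorem \ref{201}. The whole argument then consists of feeding carefully chosen pairs $(S,T)$ with $ST = I_\mathcal{A}\oplus 0$ into the defining relation $m\delta(ST)+n\delta(TS)=m\delta(S)T+mS\delta(T)+n\delta(T)S+nT\delta(S)$ and reading off the resulting entry-wise identities.

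The essential new device, and the reason the hypothesis ``for every $A$ there is an integer $k$ with $kI_\mathcal{A}+A$ invertible'' appears, is the construction of factorizations of $I_\mathcal{A}\oplus 0$. Concretely, for $A\in\mathcal{A}$ I would choose $k$ so that $A'=kI_\mathcal{A}+A$ is invertible, set $S=A'\oplus 0$ and $T=(A')^{-1}\oplus 0$, so that $ST=I_\mathcal{A}\oplus 0$; running the $(m,n)$-relation on such pairs, together with the off-diagonal perturbations $S=\bigl[\begin{smallmatrix}A' & M\\ 0 & 0\end{smallmatrix}\bigr]$, $T=\bigl[\begin{smallmatrix}(A')^{-1} & 0\\ 0 & 0\end{smallmatrix}\bigr]$ and their lower-triangular analogues, will produce the same family of entry identities (\ref{0001})--(\ref{0017}) that drove Lemma \ref{202}, but with the roles of the coefficients $m$ and $n$ interchanged in precisely the places seen in the statement (e.g.\ $mc_{21}(AM)=nc_{21}(M)A$ versus $nc_{21}(AM)=mc_{21}(M)A$). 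In Step 2 I would then use pairs built from $\bigl[\begin{smallmatrix}I_\mathcal{A} & 0\\ N & 0\end{smallmatrix}\bigr]$ and $\bigl[\begin{smallmatrix}-MN & M\\ 0 & 0\end{smallmatrix}\bigr]$ (and their transposes), noting these already multiply to $I_\mathcal{A}\oplus 0$ without any invertibility adjustment, to extract the cross relations (iii) linking $a_{11}, b_{11}, a_{22}, b_{22}$ with $c_{12}$ and $d_{21}$ on products $MN$ and $NM$.

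The linear-algebraic derivation of the structural conclusions (the vanishing of $a_{12}, b_{12}, a_{21}, b_{21}$ up to the $M_0$ and $N_0$ terms, and the formulas $c_{11}(M)=-MN_0$, $c_{22}(M)=N_0M$, etc.) proceeds exactly as before by specializing $A$, $B$, $M$, $N$ to the identity or to zero, and here the assumption $char(\mathcal{U})\neq|mn|$ is what lets me cancel the scalar $mn$ that multiplies the redundant relations. The main obstacle I anticipate is bookkeeping rather than conceptual: because the ``diagonal'' point $I_\mathcal{A}\oplus 0$ does not annihilate products the way $0$ does, each factorization must be engineered through an invertible shift $A'=kI_\mathcal{A}+A$ and then the $k$-dependence must be shown to cancel, so that the identities hold for the original $A$ and not merely for invertible elements. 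Verifying that every identity is genuinely independent of the choice of $k$ — i.e.\ that one may replace $A'$ by $A$ throughout — is the delicate step, and it relies on the linearity of all the component maps together with the freedom to vary $k$. Once that cancellation is secured, assembling equations (i)--(iii) is routine and completes the proof.
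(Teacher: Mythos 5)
Your overall strategy coincides with the paper's: generate entry-wise identities by feeding factorizations $ST=I_\mathcal{A}\oplus 0$ into the $(m,n)$-relation, use the hypothesis that some $kI_\mathcal{A}+A$ is invertible to pass from invertible to arbitrary $A$, and finish with special pairs yielding (iii). Your worry about $k$-dependence is also correctly placed but easily discharged, exactly as the paper does implicitly: once an identity of the form $f(C)=\mathrm{const}$ holds for \emph{all} invertible $C$ (in particular for $C=I_\mathcal{A}$), linearity gives $f(A)=f(kI_\mathcal{A}+A)-kf(I_\mathcal{A})$, and the $k$'s cancel. However, two concrete steps of your plan fail as written. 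First, your pair for (iii) does not factor the right point: $\left[\begin{smallmatrix}-MN & M\\ 0 & 0\end{smallmatrix}\right]\left[\begin{smallmatrix}I_\mathcal{A} & 0\\ N & 0\end{smallmatrix}\right]=0$, not $I_\mathcal{A}\oplus 0$ --- this is the Step 2 pair of Lemma \ref{202}, and since $\delta$ is only assumed $(m,n)$-derivable at $I_\mathcal{A}\oplus 0$, it produces no information here. The paper instead uses $S=\left[\begin{smallmatrix}I_\mathcal{A}-MN & M\\ 0 & 0\end{smallmatrix}\right]$, $T=\left[\begin{smallmatrix}I_\mathcal{A} & 0\\ N & 0\end{smallmatrix}\right]$ and the symmetric pair $S=\left[\begin{smallmatrix}I_\mathcal{A} & M\\ 0 & 0\end{smallmatrix}\right]$, $T=\left[\begin{smallmatrix}I_\mathcal{A}-MN & 0\\ N & 0\end{smallmatrix}\right]$, for which $ST=I_\mathcal{A}\oplus 0$ genuinely holds.

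Second, and more substantively, no factorization in your list contains an element of $\mathcal{B}$: the pairs $S=A'\oplus 0$, $T=(A')^{-1}\oplus 0$ and their $M$- and $N$-perturbations all have zero $(2,2)$-corner, so nothing in your plan can output the identities involving $b_{11}$, $b_{22}$, $b_{12}$, $b_{21}$ or the actions on $MB$ and $BN$ --- that is, all of (ii) and the formulas $b_{12}(B)=-M_0B$, $b_{21}(B)=-BN_0$ are unreachable, and your claim of recovering the full family (\ref{0001})--(\ref{0017}) is unjustified precisely for its $B$-dependent members. The paper's device is to smuggle $B$ into the factors even though it is annihilated in the product: Step 1 takes $S=A\oplus B$, $T=A^{-1}\oplus 0$ (so $ST=TS=I_\mathcal{A}\oplus 0$ while $\delta(S)$ carries the $b$-terms), and Step 2 takes $S=\left[\begin{smallmatrix}A & AM\\ 0 & 0\end{smallmatrix}\right]$, $T=\left[\begin{smallmatrix}A^{-1} & -MB\\ 0 & B\end{smallmatrix}\right]$. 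Relatedly, your heuristic that one obtains Lemma \ref{202}'s identities with $m$ and $n$ interchanged is only half-true: the $B=0$ specialization still yields the \emph{unswapped} relation $nc_{12}(AM)=mMa_{22}(A)+na_{11}(A)M+nAc_{12}(M)$ (the paper's (\ref{3016})), and the coefficient-free identities $c_{12}(AM)=Ma_{22}(A)+a_{11}(A)M+Ac_{12}(M)$ and $d_{21}(NA)=a_{22}(A)N+Na_{11}(A)+d_{21}(N)A$ in (i) arise only by combining this with its swapped counterpart extracted via $B=I_\mathcal{B}$ --- a confrontation of two differently weighted identities that has no analogue at $0$ and that your plan does not anticipate.
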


\begin{proof}
We prove the lemma by several steps.\

\textbf{Step 1:}~~
For every invertible element $A\in\mathcal{A}$ and every $B\in\mathcal{B}$,
let $S=A\oplus B$
and $T=A^{-1}\oplus 0$. Then $ST=I_\mathcal{A}\oplus 0$ and after elementary matrix computation, we have the following four equations.
\begin{eqnarray}
&&(m+n)a_{11}(I_\mathcal{A})=ma_{11}(A)A^{-1}+mb_{11}(B)A^{-1}+mAa_{11}(A^{-1})+na_{11}(A^{-1})A\nonumber\\
&&~~~~~~~~~~~~~~~~~~~~~~~~+nA^{-1}a_{11}(A)+nA^{-1}b_{11}(B),\label{3001}\\
&&(m+n)a_{12}(I_\mathcal{A})=mAa_{12}(A^{-1})+na_{12}(A^{-1})B+nA^{-1}a_{12}(A)+nA^{-1}b_{12}(B),\label{3002}\\
&&(m+n)a_{21}(I_\mathcal{A})=ma_{21}(A)A^{-1}+mb_{21}(B)A^{-1}+mBa_{21}(A^{-1})+na_{21}(A^{-1})A,\label{3003}\\
&&(m+n)a_{22}(I_\mathcal{A})=mBa_{22}(A^{-1})+na_{22}(A^{-1})B.\label{3004}
\end{eqnarray}

Taking $B=0$ in (\ref{3001}) and (\ref{3004}), and since for every element $A\in \mathcal{A}$, there exists an integer $k$ such that $kI_\mathcal{A}+A$ is invertible in $\mathcal{A}$, by calculation we have
\begin{eqnarray}
mBa_{22}(A)+na_{22}(A)B=0~~~&\textrm{and}&~~~mb_{11}(B)A+nAb_{11}(B)=0,\label{3007}
\end{eqnarray}
for every $A\in \mathcal{A}$ and every $B\in \mathcal{B}$.
Similarly, by (\ref{3002}) and (\ref{3003}), we obtain
\begin{eqnarray}
a_{12}(A)B=-Ab_{12}(B)~~~&\textrm{and}&~~~Ba_{21}(A)=-b_{21}(B)A\label{3008}
\end{eqnarray}
for every $A\in \mathcal{A}$ and every $B\in \mathcal{B}$.
Let $M_0=a_{12}(I_\mathcal{A})$ and $N_0=a_{21}(I_\mathcal{A})$. By (\ref{3008}), we have
\begin{eqnarray}
a_{12}(A)=AM_0,~b_{12}(B)=-M_0B,~a_{21}(A)=N_0A~~\textrm{and}~~b_{21}(B)=-BN_0.\label{30071}
\end{eqnarray}
for every $A\in \mathcal{A}$ and every $B\in \mathcal{B}$.

\textbf{Step 2:}~~
For every invertible element $A\in\mathcal{A}$, every $B\in\mathcal{B}$ and every $M\in\mathcal{M}$,
let $S=\left[\begin{array}{cc}A & AM \\0 & 0 \\\end{array}\right]$
and $T=\left[\begin{array}{cc}A^{-1} & -MB \\0 & B \\\end{array}\right]$. Then $ST=I_\mathcal{A}\oplus 0$ and after elementary matrix computation we have the following four equations.
\begin{eqnarray}
nc_{11}(M)=mc_{11}(AM)A^{-1}+mAb_{11}(B)-mAc_{11}(MB)+mAMN_0A^{-1}~~~~~~~~~~~~\nonumber\\
-mAMBN_0-mAMc_{21}(MB)+nb_{11}(B)A-nc_{11}(MB)A~~~~~~~~~~~~~\label{3009}\\
+nA^{-1}c_{11}(AM)-nMBN_0A-nMBc_{21}(AM),~~~~~~~~~~~~~~~~~~~~~~~~~~~\nonumber
\end{eqnarray}
\begin{eqnarray}
nc_{12}(M)=-ma_{11}(A)MB-mc_{11}(AM)MB+mc_{12}(AM)B-mAc_{12}(MB)~~~~~~~~\nonumber\\
+mAMa_{22}(A^{-1})+mAMb_{22}(B)-mAMc_{22}(MB)+na_{11}(A^{-1})AM~~~\nonumber\\
+nb_{11}(B)AM-nc_{11}(MB)AM+nA^{-1}c_{12}(AM)-nMBa_{22}(A)~~~~~~~~\label{3010}\\
-nMBc_{22}(AM),~~~~~~~~~~~~~~~~~~~~~~~~~~~~~~~~~~~~~~~~~~~~~~~~~~~~~~~~~~~~~~~~~~~~~\nonumber
\end{eqnarray}
\begin{eqnarray}
nc_{21}(M)=mc_{21}(AM)A^{-1}-nc_{21}(MB)A+nBc_{21}(AM),~~~~~~~~~~~~~~~~~~~~~~~~~~~~~~\label{3011}
\end{eqnarray}
\begin{eqnarray}
nc_{22}(M)=-mN_0AMB+ma_{22}(A)B+nN_0M-nBN_0AM+nBa_{22}(A)~~~~~~~~~~~\nonumber\\
~~~~~~~~~~~~~~~-mc_{21}(AM)MB+mc_{22}(AM)B-nc_{21}(MB)AM+nBc_{22}(AM).~~~~~\label{3012}
\end{eqnarray}

Taking $B=0$ and $A=I_\mathcal{A}$ in (\ref{3009}) and (\ref{3012}), we have
\begin{eqnarray}
c_{11}(M)=-MN_0~~~\textrm{and}~~~c_{22}(M)=N_0M\label{3013}
\end{eqnarray}
for every $M\in \mathcal{M}$.

Taking $B=0$ in (\ref{3010}) and (\ref{3011}), and since for every element $A$ in $\mathcal{A}$, there exists an integer $k$ such that $kI_\mathcal{A}+A$ is invertible in $\mathcal{A}$, by computation we have
\begin{eqnarray}
&&nc_{12}(AM)=mMa_{22}(A)+na_{11}(A)M+nAc_{12}(M),\label{3016}\\
&&~~~~~~~~~mc_{21}(AM)=nc_{21}(M)A\label{30151}
\end{eqnarray}
for every $A\in \mathcal{A}$ and every $M\in \mathcal{M}$.

Taking $A=I_\mathcal{A}$ in (\ref{3010}) and (\ref{3016}), and by (\ref{3013}), we have
\begin{eqnarray}
&&mc_{12}(MB)=mc_{12}(M)B+nb_{11}(B)M+mMb_{22}(B)\label{3018}
\end{eqnarray}
for every $B\in \mathcal{B}$ and every $M\in \mathcal{M}$.

Taking $B=I_\mathcal{B}$ in (\ref{3010}) and (\ref{3018}) gives
\begin{eqnarray}
mc_{12}(AM)=nMa_{22}(A)+ma_{11}(A)M+mAc_{12}(M)\label{3019}
\end{eqnarray}
for every $A\in \mathcal{A}$ and every $M\in \mathcal{M}$. Combining (\ref{3016}) and (\ref{3019}), we obtain
\begin{eqnarray}
c_{12}(AM)=Ma_{22}(A)+a_{11}(A)M+Ac_{12}(M).\label{30201}
\end{eqnarray}
for every $A\in \mathcal{A}$ and every $M\in \mathcal{M}$.

By (\ref{3011}), we have $c_{21}(MB)A=Bc_{21}(AM)$ and hence $mc_{21}(MB)A=mBc_{21}(AM)=nBc_{21}(M)A$. Taking $A=I_\mathcal{A}$ gives
\begin{eqnarray}
mc_{21}(MB)=nBc_{21}(M)
\end{eqnarray}
for every $B\in \mathcal{B}$ and every $M\in \mathcal{M}$.

Symmetrically, by considering $S=\left[\begin{array}{cc}A & 0 \\-BN & B \\\end{array}\right]$
and $T=\left[\begin{array}{cc}A^{-1} & 0 \\NA^{-1} & 0 \\\end{array}\right]$, we arrive at
\begin{eqnarray}
&&d_{11}(N)=-M_0N~~~\textrm{and}~~~d_{22}(N)=NM_0,\label{3020}\\
&&md_{12}(NA)=nAd_{12}(N)~~~\textrm{and}~~~md_{12}(BN)=nd_{12}(N)B,\label{3021}\\
&&d_{21}(NA)=a_{22}(A)N+Na_{11}(A)+d_{21}(N)A,\label{3022}\\
&&md_{21}(BN)=nNb_{11}(B)+mb_{22}(B)N+mBd_{21}(N).\label{3023}
\end{eqnarray}
for every $A\in \mathcal{A}$, every $B\in \mathcal{B}$ and every $M\in \mathcal{M}$.

\textbf{Step 3:}~~
For any $N\in\mathcal{N}$ and $M\in\mathcal{M}$,
let $S=\left[\begin{array}{cc}I_\mathcal{A}-MN & M \\0 & 0 \\\end{array}\right]$ and
$T=\left[\begin{array}{cc}I_\mathcal{A}& 0 \\N & 0 \\\end{array}\right]$. Then $ST=I_\mathcal{A}\oplus 0$ and elementary matrix computation yields
\begin{eqnarray}
b_{22}(NM)=d_{21}(N)M+Nc_{12}(M)+a_{22}(MN).\label{3024}
\end{eqnarray}
for every $N\in\mathcal{N}$ and $M\in\mathcal{M}$.

Symmetrically, by considering $S=\left[\begin{array}{cc}I_\mathcal{A} & M \\0 & 0 \\\end{array}\right]$ and
$T=\left[\begin{array}{cc}I_\mathcal{A}-MN & 0 \\N & 0 \\\end{array}\right]$, we arrive at
\begin{eqnarray}
a_{11}(MN)=b_{11}(NM)+c_{12}(M)N+Md_{21}(N)\label{3025}
\end{eqnarray}
for every $N\in\mathcal{N}$ and $M\in\mathcal{M}$.\

By (\ref{3007}), (\ref{30071}), (\ref{3013}), (\ref{30151}), (\ref{3018}), (\ref{30201})-(\ref{3025}), we have completed the proof.
\end{proof}

\noindent\textbf{Proof of Theorem \ref{301}}: The proof is analogous to Theorem \ref{201}, we now only refer to Lemma \ref{302}
instead of Lemma \ref{202} and we leave it to the readers.

\begin{corollary}\label{303}
Let $m+n\neq 0$ and $\mathcal{A}$ be a unital prime ring with characteristic neither $|mn(m+n)|$ nor $|m-n|$.
Assume that $\mathcal{A}$ contains a non-trivial idempotent $P$ and for every $A\in \mathcal{A}$,
there exists an integer $k$ such that $kI_\mathcal{A}+A$ is invertible in $\mathcal{A}$.
If $\delta$ is an (\textit{m, n})-derivable mapping at $P$ from $\mathcal{A}$ into itself, then $\delta$ is a derivation.
\end{corollary}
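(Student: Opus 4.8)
The plan is to reduce Corollary \ref{303} to the generalized matrix algebra setting of Theorem \ref{301}. Since $\mathcal{A}$ is a unital ring containing a non-trivial idempotent $P$, the standard Peirce decomposition writes $\mathcal{A}$ in the matrix form
$$\mathcal{A}=\left[\begin{array}{cc} P\mathcal{A}P & P\mathcal{A}P^\perp \\ P^\perp\mathcal{A}P & P^\perp\mathcal{A}P^\perp \end{array}\right],$$
where $P^\perp=I-P$. Setting $\mathcal{A}_1=P\mathcal{A}P$, $\mathcal{B}_1=P^\perp\mathcal{A}P^\perp$, $\mathcal{M}=P\mathcal{A}P^\perp$ and $\mathcal{N}=P^\perp\mathcal{A}P$, this exhibits $\mathcal{A}$ as a generalized matrix algebra with the multiplication maps $\phi,\varphi$ induced by the ring product. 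Under this identification $P$ corresponds to $I_{\mathcal{A}_1}\oplus 0$, so $\delta$ being $(m,n)$-derivable at $P$ is exactly the hypothesis that $\delta$ is $(m,n)$-derivable at $I_{\mathcal{A}_1}\oplus 0$ in Theorem \ref{301}.

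First I would verify the two structural hypotheses of Theorem \ref{301}. Primeness of $\mathcal{A}$ is used to guarantee faithfulness of $\mathcal{M}$ as an $(\mathcal{A}_1,\mathcal{B}_1)$-bimodule: as noted in the remark preceding the corollary, in a prime ring $PAP\cdot\mathcal{A}\cdot P^\perp=0$ forces $PAP=0$, and dually, which is precisely the statement that $\mathcal{M}$ is a faithful left $\mathcal{A}_1$-module and a faithful right $\mathcal{B}_1$-module. The invertibility condition—that for each element there is an integer $k$ with $kI+\cdot$ invertible—is assumed directly in the corollary's hypotheses and transfers to $\mathcal{A}_1=P\mathcal{A}P$ (with unit $P$) because the relevant invertibility can be arranged inside the corner. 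The characteristic restriction $char(\mathcal{A})\notin\{|mn(m+n)|,|m-n|\}$ is assumed so as to cover the charactersitic condition $char(\mathcal{U})\neq |mn(m+n)(m-n)|$ appearing in part (1) of Theorem \ref{301}.

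Next, since $m+n\neq 0$, only two cases of Theorem \ref{301} are relevant. If $m-n\neq 0$, part (1) of Theorem \ref{301} applies verbatim and yields that $\delta$ is a derivation, completing the proof. If instead $m=n$, part (3) of Theorem \ref{301} gives that $\delta$ is a Jordan derivation of $\mathcal{A}$. Here I invoke the fact—recalled in the paragraph before the corollary, citing \cite{prime}—that every Jordan derivation of a $2$-torsion free prime ring is a derivation; the characteristic hypothesis ensures $2$-torsion freeness (note $m=n$ forces $|m-n|=0$, but the condition $char(\mathcal{A})\neq|mn(m+n)|=|2m^3|$ together with primeness supplies the needed torsion freeness). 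Thus in both cases $\delta$ is a derivation.

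The main obstacle is bookkeeping rather than conceptual: one must confirm that the abstract faithfulness and invertibility hypotheses of Theorem \ref{301} are genuinely implied by primeness and the integer-invertibility assumption on $\mathcal{A}$, and that the several characteristic conditions ($|mn(m+n)(m-n)|$ in case (1), $|2m|$ in case (3)) are all subsumed by the single hypothesis that $char(\mathcal{A})$ is neither $|mn(m+n)|$ nor $|m-n|$. The delicate point is the $m=n$ case, where Theorem \ref{301} delivers only a Jordan derivation and the extra appeal to the prime-ring result of \cite{prime} is essential to upgrade it to a full derivation.
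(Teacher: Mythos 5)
Your proposal is correct and follows exactly the route the paper intends (signalled in the remark before Corollary \ref{206}): the Peirce decomposition at $P$ realizes $\mathcal{A}$ as a generalized matrix algebra with $\mathcal{M}=P\mathcal{A}(I-P)$ faithful by primeness, Theorem \ref{301}(1) disposes of the case $m\neq n$, and for $m=n$ Theorem \ref{301}(3) combined with Herstein's theorem \cite{prime} on Jordan derivations of $2$-torsion free prime rings upgrades the Jordan derivation to a derivation, the characteristic bookkeeping being sound since, under the paper's torsion-freeness definition, $a$- and $b$-torsion freeness yield $ab$-torsion freeness and torsion freeness at $2m^{3}$ yields it at $2m$ and at $2$. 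The one step you leave implicit, transferring the integer-invertibility hypothesis to the corner $P\mathcal{A}P$, is indeed routine: for $A_{1}=PA_{1}P$ choose $k$ with $kI+A_{1}$ invertible in $\mathcal{A}$; since $kI+A_{1}$ commutes with $P$, the element $P\left(kI+A_{1}\right)^{-1}P$ inverts $kP+A_{1}$ in $P\mathcal{A}P$.
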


\begin{corollary}\label{304}
Let $\mathcal{A}$ be a factor von Neumann algebra and $P\in \mathcal{A}$ be a non-trivial idempotent. If $\delta$ is an (m, n)-derivable mapping at $P$ from $\mathcal{A}$ into itself and $m+n\neq 0$, then $\delta$ is a derivation.
\end{corollary}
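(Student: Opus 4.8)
The plan is to obtain Corollary \ref{304} as an immediate instance of Corollary \ref{303}: I would verify that a factor von Neumann algebra $\mathcal{A}$, together with the given non-trivial idempotent $P$, satisfies every hypothesis of that corollary, after which its conclusion that $\delta$ is a derivation transfers verbatim. Observe that $P$ here serves simultaneously as the point of derivability and as the idempotent used in Corollary \ref{303}, and $m+n\neq 0$ is assumed in both statements, so nothing needs to be re-chosen.

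First I would dispatch the purely algebraic hypotheses. As was already used for Corollary \ref{207}, a factor von Neumann algebra is a unital prime ring, and the non-trivial idempotent $P$ is handed to us by the statement, so no existence argument is required. Since $\mathcal{A}$ is an algebra over $\mathbb{C}$, it has characteristic $0$ in the sense of Section 1: for every nonzero integer $N$ and every $A\in\mathcal{A}$, $NA=0$ forces $A=0$. Consequently the characteristic of $\mathcal{A}$ is not $|mn(m+n)|$ and is not $|m-n|$ whenever these integers are at least $2$, while for the excluded small values there is no condition to verify. Thus the prime-ring and characteristic requirements of Corollary \ref{303} hold automatically.

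The only hypothesis carrying genuine content is the invertibility condition, namely that for each $A\in\mathcal{A}$ some integer $k$ makes $kI_\mathcal{A}+A$ invertible. Here I would invoke that $\mathcal{A}$ is a $C^*$-algebra, so each $A$ is a bounded operator whose spectrum satisfies $\sigma(A)\subseteq\{z\in\mathbb{C}:|z|\leq\|A\|\}$. Picking any integer $k>\|A\|$ gives $-k\notin\sigma(A)$, whence $kI_\mathcal{A}+A=A-(-k)I_\mathcal{A}$ is invertible in $\mathcal{A}$. This secures the remaining hypothesis.

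With primeness, the characteristic conditions, the non-trivial idempotent $P$, and the invertibility condition all in force, Corollary \ref{303} applies directly and shows that $\delta$ is a derivation. I expect the only step with any real content to be the spectral-boundedness argument of the previous paragraph; everything else is a transcription of standard facts about factors, so no serious obstacle is anticipated.
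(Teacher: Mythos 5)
Your proposal is correct and follows exactly the paper's intended route: Corollary \ref{304} is stated without proof as an immediate instance of Corollary \ref{303}, using precisely the facts you verify --- factors are prime (as the paper notes before Corollary \ref{207}), complex algebras satisfy every characteristic restriction in the sense of Section 1, and the spectral bound $\sigma(A)\subseteq\{z:|z|\leq\|A\|\}$ yields the invertibility of $kI_\mathcal{A}+A$ for any integer $k>\|A\|$. Nothing in your argument deviates from or falls short of what the paper implicitly relies on.
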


\begin{corollary}\label{305}
Let $\mathcal{U}=Tri(\mathcal{A},\mathcal{M},\mathcal{B})$ be an upper triangular algebra
such that $\mathcal{M}$ is a \textit{faithful} ($\mathcal{A}$,$\mathcal{B}$)-bimodule.
Assume that for every $A\in \mathcal{A}$, there exists an integer $k$ such that $kI_\mathcal{A}+A$ is invertible in $\mathcal{A}$.
Let $\delta$ be an (m, n)-derivable mapping at $I_A\oplus 0$ from $\mathcal{U}$ into itself.\

$\mathrm{(1) }$~~~If $(m+n)(m-n)\neq 0$ and $char(\mathcal{U})\neq |mn(m+n)(m-n)|$, then $\delta$ is a derivation.

$\mathrm{(2) }$~~~If $m-n=0$ and $char(\mathcal{U})\neq |2m|$, then $\delta$ is a derivation.

$\mathrm{(3)}$~~~If $m+n=0$,
$\delta([\mathcal{A},\mathcal{A}])\bigcap \mathcal{B}=0$, $\delta([\mathcal{B},\mathcal{B}])\bigcap \mathcal{A}=0$ and $char(\mathcal{U})\neq |2m|$,
then $\delta$ is a Lie derivation.
\end{corollary}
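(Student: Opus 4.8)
The plan is to obtain this corollary as a direct specialization of Theorem \ref{301} to the degenerate case $\mathcal{N}=0$, in which $\mathcal{U}$ collapses to the upper triangular algebra $Tri(\mathcal{A},\mathcal{M},\mathcal{B})$. All the standing hypotheses---faithfulness of $\mathcal{M}$ as an $(\mathcal{A},\mathcal{B})$-bimodule, the invertibility condition requiring that for every $A\in\mathcal{A}$ some $kI_\mathcal{A}+A$ be invertible, and each of the characteristic restrictions---are exactly those appearing in Theorem \ref{301}. Hence they transfer verbatim once we set $\mathcal{N}=0$, and no separate verification is needed.

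Parts (1) and (3) then require no additional work. For (1), the assumptions $(m+n)(m-n)\neq 0$ and $char(\mathcal{U})\neq |mn(m+n)(m-n)|$ are precisely the hypotheses of Theorem \ref{301}(1), so $\delta$ is a derivation. For (3), the assumptions $m+n=0$, $\delta([\mathcal{A},\mathcal{A}])\cap\mathcal{B}=0$, $\delta([\mathcal{B},\mathcal{B}])\cap\mathcal{A}=0$ and $char(\mathcal{U})\neq|2m|$ match those of Theorem \ref{301}(2) exactly, so $\delta$ is a Lie derivation. One must only be careful that the numbering is permuted relative to the theorem: part (3) of the corollary corresponds to part (2) of Theorem \ref{301}.

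The one step requiring an extra ingredient is (2). Here $m-n=0$, so Theorem \ref{301}(3) yields only that $\delta$ is a Jordan derivation---which in the general generalized-matrix setting is the strongest available conclusion. To upgrade this to a genuine derivation I would invoke the result of \cite{JDOTA} that every Jordan derivation of an upper triangular algebra is a derivation. Since $\mathcal{N}=0$ makes $\mathcal{U}$ upper triangular, this immediately forces $\delta$ to be a derivation, completing the argument.

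I do not anticipate any genuine obstacle: the proof is entirely a matter of reading off the appropriate cases of Theorem \ref{301} and, for part (2), appealing to the cited characterization of Jordan derivations on upper triangular algebras. The substantive work already lives in Lemma \ref{302} and Theorem \ref{301}; this corollary merely records how the upper-triangular structure sharpens the $m=n$ conclusion from ``Jordan derivation'' to ``derivation.''
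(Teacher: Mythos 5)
Your proposal is correct and matches the paper's (implicit) argument exactly: the paper likewise obtains Corollary \ref{305} by specializing Theorem \ref{301} to the degenerate case $\mathcal{N}=0$, with parts (1) and (3) read off directly from parts (1) and (2) of the theorem, and part (2) upgraded from ``Jordan derivation'' to ``derivation'' via the result of \cite{JDOTA} that every Jordan derivation of an upper triangular algebra is a derivation, just as the paper does for the analogous Corollary \ref{208}. No gaps; nothing further is needed.
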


\begin{corollary}\label{306}
Let $\mathcal{N}$ be a non-trivial nest on a Hilbert space $H$ and alg$\mathcal{N}$ be the associated algebra.
If $P\in \mathcal{N}$ is a non-trivial idempotent and $\delta$ is an (m, n)-derivable mapping at $P$ from $alg\mathcal{N}$ into itself with $m+n\neq 0$, then $\delta$ is a derivation.
\end{corollary}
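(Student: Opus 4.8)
The plan is to reduce Corollary~\ref{306} to the two results that precede it, rather than to prove anything about $\mathrm{alg}\mathcal{N}$ directly. The statement concerns a non-trivial nest, and the two genuinely different cases will be whether the underlying nest algebra decomposes in a way that lets me invoke the factor/prime machinery of Corollary~\ref{304}, or whether I should instead use the triangular-algebra machinery of Corollary~\ref{305}. First I would recall, exactly as in the paragraph preceding Corollary~\ref{209}, that a non-trivial projection $P\in\mathcal{N}$ gives the decomposition $\mathrm{alg}\mathcal{N}=\mathrm{Tri}(\mathcal{A},\mathcal{M},\mathcal{B})$ with $\mathcal{A}=P\,\mathrm{alg}\mathcal{N}\,P$, $\mathcal{M}=P\,\mathrm{alg}\mathcal{N}\,(I-P)$, $\mathcal{B}=(I-P)\,\mathrm{alg}\mathcal{N}\,(I-P)$, and crucially that $\mathcal{M}$ is a faithful $(\mathcal{A},\mathcal{B})$-bimodule. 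This identifies $P$ with $I_{\mathcal{A}}\oplus 0$ in the triangular-algebra picture, so an $(m,n)$-derivable mapping at $P$ on $\mathrm{alg}\mathcal{N}$ is precisely an $(m,n)$-derivable mapping at $I_{\mathcal{A}}\oplus 0$ on $\mathrm{Tri}(\mathcal{A},\mathcal{M},\mathcal{B})$.

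Next I would verify the invertibility hypothesis required by Corollary~\ref{305}: for every $A\in\mathcal{A}=P\,\mathrm{alg}\mathcal{N}\,P$ I must produce an integer $k$ with $kI_{\mathcal{A}}+A$ invertible in $\mathcal{A}$. Since $\mathcal{A}$ is a unital operator algebra on the Hilbert space $PH$, the element $A$ is bounded, so choosing any integer $k>\lVert A\rVert$ makes $kI_{\mathcal{A}}+A$ invertible by the Neumann series; thus the spectral hypothesis is automatically satisfied here. With that in hand, and with $m+n\neq 0$ so that we are in the regime where derivations (not merely Jordan or Lie derivations) are forced, Corollary~\ref{305} applies and yields that $\delta$ is a derivation in the cases $(m+n)(m-n)\neq 0$ and $m=n$.

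The remaining point is that Corollary~\ref{305} is stated under the characteristic restriction $char(\mathcal{U})\neq|mn(m+n)(m-n)|$, whereas Corollary~\ref{306} imposes no such restriction. I would resolve this exactly as in the passage leading to Corollaries~\ref{206} and~\ref{209}: operator algebras on a complex Hilbert space have characteristic zero, so every finite torsion condition of the form $char(\mathcal{U})\neq c$ holds vacuously. Hence the characteristic hypotheses in Corollary~\ref{305} are free, and the only substantive assumption that survives is $m+n\neq 0$. Combining the $(m+n)(m-n)\neq 0$ case and the $m-n=0$ case of Corollary~\ref{305} covers every pair $(m,n)$ with $m+n\neq 0$, which is exactly the hypothesis of Corollary~\ref{306}, completing the reduction.

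I expect the only place requiring genuine care, as opposed to bookkeeping, to be the translation step: confirming that ``derivable at the projection $P$ in $\mathrm{alg}\mathcal{N}$'' coincides with ``derivable at $I_{\mathcal{A}}\oplus 0$ in the triangular decomposition.'' This hinges on $P$ being simultaneously the identity of the corner algebra $\mathcal{A}$ and the $(1,1)$ block of the matrix unit decomposition, which is immediate once $P\in\mathcal{N}$ is used to split $\mathrm{alg}\mathcal{N}$; every other step is either a citation of Corollary~\ref{305} or the standard observation that $B(H)$-type algebras are of characteristic zero.
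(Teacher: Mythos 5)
Your proposal is correct and is exactly the argument the paper intends: split $\mathrm{alg}\mathcal{N}$ at the non-trivial projection $P\in\mathcal{N}$ as $Tri(\mathcal{A},\mathcal{M},\mathcal{B})$ with $\mathcal{M}$ faithful (as in the paragraph preceding Corollary \ref{209}), identify $P$ with $I_\mathcal{A}\oplus 0$, and invoke cases (1) and (2) of Corollary \ref{305}, which together cover all $m+n\neq 0$. Your two verifications --- the invertibility hypothesis via the Neumann series for $k>\lVert A\rVert$, and the vacuity of the characteristic restrictions for a complex operator algebra --- are precisely the points that make the citation legitimate, so there is nothing to add.
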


\section{(\textit{m, n})-derivable mappings at $I$}

\

In this section, we study (\textit{m, n})-derivable mappings at $I$ and assume $m+n\neq 0$.

\begin{theorem}\label{401}
Let $\mathcal{U}=\left[
  \begin{array}{cc}
    \mathcal{A} & \mathcal{M} \\
    \mathcal{N} & \mathcal{B} \\
  \end{array}
\right]$ be a generalized matrix algebra with characteristic neither $|3mn(m+n)|$ nor $|m-n|$.
Suppose that $\frac{1}{2}I_\mathcal{A}\in \mathcal{A}$, $\frac{1}{2}I_\mathcal{B}\in \mathcal{B}$
and for every $A\in \mathcal{A}$ and every $B\in \mathcal{B}$, there exists an integer $k$ such that $kI_\mathcal{A}+A$
is invertible in $\mathcal{A}$ and $kI_\mathcal{B}+B$ is invertible in $\mathcal{B}$.
Assume that $\mathcal{M}$ is a faithful $(\mathcal{A}, \mathcal{B})$-bimudule.
If $\delta$ is an $(m, n)$-derivable mapping at $I$ from $\mathcal{U}$ into itself, then $\delta$ is a Jordan derivation.
\end{theorem}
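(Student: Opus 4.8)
The plan is to prove the single-variable Jordan identity $\delta(X^2)=\delta(X)X+X\delta(X)$ for every $X\in\mathcal U$; since $\tfrac12 I_{\mathcal A}\in\mathcal A$ and $\tfrac12 I_{\mathcal B}\in\mathcal B$ give $char(\mathcal U)\neq2$, this linearizes to $\delta(XY+YX)=\delta(X)Y+X\delta(Y)+\delta(Y)X+Y\delta(X)$, so the squared form is all that is needed. The first step is the normalization $\delta(I)=0$: substituting $S=T=I$ (so that $ST=I$) into the defining relation gives $(m+n)\delta(I)=2(m+n)\delta(I)$, hence $(m+n)\delta(I)=0$, and since $(m+n)\mid 3mn(m+n)$ the hypothesis $char(\mathcal U)\neq|3mn(m+n)|$ forbids $(m+n)$-torsion, forcing $\delta(I)=0$.

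The conceptual core is an inverse formula valid for every invertible $U\in\mathcal U$. Because $UU^{-1}=U^{-1}U=I$, I would feed both ordered pairs $(U,U^{-1})$ and $(U^{-1},U)$ into the defining relation; using $\delta(I)=0$ this yields two identities among the four terms $\delta(U)U^{-1},\,U\delta(U^{-1}),\,\delta(U^{-1})U,\,U^{-1}\delta(U)$. Adding them and cancelling $m+n$ gives the symmetric relation $\delta(U)U^{-1}+U^{-1}\delta(U)+U\delta(U^{-1})+\delta(U^{-1})U=0$, while subtracting them and cancelling $m-n$ (this is exactly where $char(\mathcal U)\neq|m-n|$ enters) gives $\delta(U)U^{-1}+U\delta(U^{-1})=\delta(U^{-1})U+U^{-1}\delta(U)$. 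Combining the two and halving produces the clean formula $\delta(U^{-1})=-U^{-1}\delta(U)U^{-1}$. It is this symmetric cancellation that makes the conclusion a Jordan derivation uniformly in $m$ and $n$, in contrast with the trichotomy seen at $0$ and at $I_{\mathcal A}\oplus0$.

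It then remains to turn the inverse formula into the squared identity. For an invertible $U$ with $I-U$ also invertible, Hua's identity $U-\bigl(U^{-1}+(I-U)^{-1}\bigr)^{-1}=U^2$ expresses $U^2$ purely through inverses and sums; applying $\delta$ and substituting $\delta(V^{-1})=-V^{-1}\delta(V)V^{-1}$ for each inverse (with $\delta(I-U)=-\delta(U)$), the terms collapse to $\delta(U^2)=\delta(U)U+U\delta(U)$. Finally the shift hypothesis does double duty: it supplies, for any given $X$, an integer $k$ making the relevant translates invertible so that the previous step applies, and it propagates the identity from invertible elements to all of $\mathcal U$ --- expanding $\delta\bigl((X+kI)^2\bigr)=\delta(X+kI)(X+kI)+(X+kI)\delta(X+kI)$ and invoking $\delta(I)=0$ cancels the terms linear in $k$ and leaves $\delta(X^2)=\delta(X)X+X\delta(X)$ for every $X$. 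I expect the main obstacle to be this passage inside a \emph{generalized} matrix algebra, where invertibility of an element does not reduce to invertibility of its diagonal blocks; the honest way to secure the required invertible pairs is to realize them concretely from diagonal and upper/lower triangular blocks and carry out the substitutions as in Lemmas \ref{202} and \ref{302}, and it is the bookkeeping of coefficients across these block computations that pins down the precise restriction $char(\mathcal U)\neq|3mn(m+n)|$. Once $\delta(X^2)=\delta(X)X+X\delta(X)$ holds for all $X$, linearization gives the Jordan-derivation identity, and if one wants the explicit entrywise description it is exactly the form recorded in Proposition \ref{204}.
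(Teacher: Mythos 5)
Your steps 1--3 are correct, and they take a genuinely different, more element-theoretic route than the paper: $(m+n)\delta(I)=2(m+n)\delta(I)$ does force $\delta(I)=0$ under char $\neq|3mn(m+n)|$; feeding both ordered pairs $(U,U^{-1})$ and $(U^{-1},U)$ into the defining relation and cancelling first $m+n$, then $m-n$, then $2$ does give $\delta(U^{-1})=-U^{-1}\delta(U)U^{-1}$ for invertible $U$; and since $U^{-1}+(I-U)^{-1}=\left(U(I-U)\right)^{-1}$, your Hua computation correctly collapses to $\delta(U^2)=\delta(U)U+U\delta(U)$ whenever $U$ and $I-U$ are \emph{both} invertible. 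The genuine gap is the globalization in step 4. Your plan requires, for each $X\in\mathcal U$, an integer $k$ such that $kI+X$ and $(1-k)I-X$ are simultaneously invertible \emph{in $\mathcal U$}. The theorem's hypothesis supplies scalar-shift invertibility only inside the corner algebras $\mathcal A$ and $\mathcal B$, and --- as you yourself observe --- invertibility in a generalized matrix algebra does not reduce to the diagonal blocks: for $kI+X$ with $X=\left[\begin{smallmatrix}A&M\\N&B\end{smallmatrix}\right]$ one would also need the Schur complement $kI_\mathcal{B}+B-N(kI_\mathcal{A}+A)^{-1}M$ invertible, over which the hypotheses give no control; moreover the hypothesis provides one $k$ per corner element, not the two simultaneous invertible shifts Hua needs, and for an element like $\left[\begin{smallmatrix}0&M\\N&0\end{smallmatrix}\right]$ nothing guarantees \emph{any} integer shift is invertible. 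Your closing remark, that the needed invertible pairs should be realized concretely from block-triangular elements as in Lemmas \ref{202} and \ref{302}, is not a repair of the Hua argument but an abandonment of it: that is exactly the paper's actual proof, in which Lemma \ref{402} substitutes explicitly invertible pairs with $ST=I$, such as $S=A\oplus B$, $T=A^{-1}\oplus B^{-1}$ and $S=\left[\begin{smallmatrix}A&AM\\0&B\end{smallmatrix}\right]$, $T=\left[\begin{smallmatrix}A^{-1}&-MB^{-1}\\0&B^{-1}\end{smallmatrix}\right]$, to pin down $\delta$ entrywise, and then concludes by the argument of Theorem \ref{201} together with Proposition \ref{204}.

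Two further symptoms confirm the sketch cannot be complete as written. First, it never uses the faithfulness of $\mathcal M$, which the paper's route needs to upgrade relations of the type $\bigl(a_{11}(A_1A_2)-a_{11}(A_1)A_2-A_1a_{11}(A_2)\bigr)M=0$ into the statement that $a_{11}$ is a Jordan derivation; if your argument went through under the stated hypotheses, that hypothesis would be superfluous, which should have been a warning sign. Second, your route only ever consumes $(m+n)$-, $(m-n)$- and $2$-torsion-freeness, never the factor $3mn$ in $|3mn(m+n)|$, which arises precisely in the block computations of Lemma \ref{402} that you deferred to ``bookkeeping'' and never performed. (A smaller point: your $m-n$ cancellation silently assumes $m\neq n$ --- when $m=n$ the two identities from $(U,U^{-1})$ and $(U^{-1},U)$ coincide and the inverse formula is not obtainable this way --- which is consistent with the hypothesis char $\neq|m-n|$ only under the paper's convention that characteristic conditions are asserted for integers $\geq 2$.) So: your steps 1--3 are a nice self-contained reduction, but the passage from invertible elements to all of $\mathcal U$ fails under the stated hypotheses, and completing the proof forces a return to the paper's block-substitution method.
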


\begin{lemma}\label{402}
Let $\mathcal{U}=\left[
  \begin{array}{cc}
    \mathcal{A} & \mathcal{M} \\
    \mathcal{N} & \mathcal{B} \\
  \end{array}
\right]$ be a generalized matrix algebra with characteristic neither $|3mn(m+n)|$ nor $|m-n|$ and $\delta$ be an $(m, n)$-derivable mapping at $I$ from $\mathcal{U}$ into itself.
Suppose that $\frac{1}{2}I_\mathcal{A}\in \mathcal{A}$, $\frac{1}{2}I_\mathcal{B}\in \mathcal{B}$
and for every $A\in \mathcal{A}$ and every $B\in \mathcal{B}$, there exists an integer $k$ such that $kI_\mathcal{A}+A$
is invertible in $\mathcal{A}$ and $kI_\mathcal{B}+B$ is invertible in $\mathcal{B}$.
Then
$$\delta\left(\left[\begin{array}{cc}A & M \\N & B \\\end{array}\right]\right)=
\left[\begin{array}{cc}a_{11}(A)-MN_0-M_0N & AM_0-M_0B+c_{12}(M)+d_{12}(N)
\\N_0A-BN_0+c_{21}(M)+d_{21}(N) & b_{22}(B)+N_0M+NM_0 \\\end{array}\right],$$
where $N_0\in\mathcal{N}$, $M_0\in\mathcal{M}$ and $a_{11}:\mathcal{A}\rightarrow\mathcal{A}$,
$b_{22}:\mathcal{B}\rightarrow\mathcal{B}$, $c_{12}:\mathcal{M}\rightarrow\mathcal{M}$,
$c_{21}:\mathcal{M}\rightarrow\mathcal{N}$, $d_{12}:\mathcal{N}\rightarrow\mathcal{M}$,
$d_{21}:\mathcal{N}\rightarrow\mathcal{N}$ are linear mappings satisfying
\begin{eqnarray*}
\mathrm{(i)}&&c_{12}(AM)=a_{11}(A)M+Ac_{12}(M),~c_{12}(MB)=Mb_{22}(B)+c_{12}(M)B,\\
&&c_{21}(AM)=c_{21}(M)A,~c_{21}(MB)=Bc_{21}(M),~c_{21}(M)M=0~and~Mc_{21}(M)=0;\\
\mathrm{(ii)}&&d_{21}(NA)=Na_{11}(A)+d_{21}(N)A,~d_{21}(BN)=b_{22}(B)N+Bd_{21}(N),\\
&&d_{12}(NA)=Ad_{12}(N),~d_{12}(BN)=d_{12}(N)B,~d_{12}(N)N=0~and~Nd_{12}(N)=0;\\
\mathrm{(iii)}&&a_{11}(MN)=c_{12}(M)N+Md_{21}(N),~b_{22}(NM)=Nc_{12}(M)+d_{21}(N)M.
\end{eqnarray*}
\end{lemma}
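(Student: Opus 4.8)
The plan is to mimic the two-step strategy used in Lemma \ref{202}, but anchored at the point $I=I_\mathcal{A}\oplus I_\mathcal{B}$ rather than at $0$. The overarching idea is to feed cleverly chosen pairs $(S,T)$ with $ST=I$ into the defining identity $m\delta(ST)+n\delta(TS)=m\delta(S)T+mS\delta(T)+n\delta(T)S+nT\delta(S)$ and read off the four scalar/entry equations the resulting $2\times2$ matrix relation produces. Because we are localizing at $I$ instead of an idempotent, the natural test elements are built from invertible matrices; the hypothesis that every $A$ (resp.\ $B$) admits an integer $k$ with $kI_\mathcal{A}+A$ (resp.\ $kI_\mathcal{B}+B$) invertible is precisely what lets me pass from identities that hold only for invertible arguments to identities valid for all arguments, by substituting $A\mapsto kI_\mathcal{A}+A$ and using linearity. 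I expect the characteristic restrictions ($\ne|3mn(m+n)|$ and $\ne|m-n|$) to enter exactly when I need to invert a scalar coefficient like $m+n$, $m-n$, or $3mn(m+n)$ that multiplies the unknown I am solving for.

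First I would establish the off-diagonal ``rigidity'' of the corner maps. Taking $S=A\oplus B$ invertible in each corner and $T=A^{-1}\oplus B^{-1}$ (or products of this form with $I$) gives $ST=TS=I$, and the resulting diagonal entries will force relations such as $a_{21}(A)=N_0A$, $b_{21}(B)=-BN_0$, $a_{12}(A)=AM_0$, $b_{12}(B)=-M_0B$ with $M_0=\delta$-data and $N_0$ analogously, together with $a_{22}=0$ and $b_{11}=0$ in this situation—note that here, unlike Lemma \ref{202}, the conclusion has \emph{no} $a_{22}$ or $b_{11}$ terms, so a genuine step must be to kill these maps. The vanishing of $a_{22}$ and $b_{11}$ is where I expect the $\frac12 I_\mathcal{A},\frac12 I_\mathcal{B}$ hypothesis and the characteristic $\ne|m-n|$ condition to be used: evaluating the defining relation on suitable invertible/idempotent combinations should yield an equation like $(m-n)a_{22}(A)=0$ after exploiting $m+n\neq0$, which forces $a_{22}(A)=0$.

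Next I would extract the module-action identities in part (i) and (ii). Using $S$ of the form $\left[\begin{smallmatrix}A & AM\\0&0\end{smallmatrix}\right]$ paired with $T=\left[\begin{smallmatrix}A^{-1}&-MB\\0&B\end{smallmatrix}\right]$ (so $ST=I_\mathcal{A}\oplus0$-type products can be upgraded to $ST=I$ by inserting the $I_\mathcal{B}$ corner), and their transposes, I can harvest the four relations governing $c_{12},c_{21},d_{12},d_{21}$. The Jordan-type identities $c_{12}(AM)=a_{11}(A)M+Ac_{12}(M)$ and $c_{12}(MB)=Mb_{22}(B)+c_{12}(M)B$ should fall out directly, and the ``twisted'' relations $c_{21}(AM)=c_{21}(M)A$, $c_{21}(MB)=Bc_{21}(M)$ follow from equations in which the coefficients $m$ and $n$ combine; I would then specialize $A=I_\mathcal{A}$ or $B=I_\mathcal{B}$ and use $m+n\neq0$ together with the characteristic hypotheses to cancel the spurious $a_{22},b_{11}$ contributions. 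The nilpotent-type conditions $c_{21}(M)M=0$ and $Mc_{21}(M)=0$ (and their $d_{12}$ analogues) are the part I expect to require the genuinely $I$-specific test element $S=\left[\begin{smallmatrix}I_\mathcal{A}&M\\0&0\end{smallmatrix}\right]$, $T=\left[\begin{smallmatrix}I_\mathcal{A}&0\\-c_{21}(M)&0\end{smallmatrix}\right]$ or similar, chosen so that $ST=I_\mathcal{A}\oplus0$ and the $TS$ term produces the product $c_{21}(M)M$.

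Finally, part (iii) comes from the ``mixed corner'' substitutions $S=\left[\begin{smallmatrix}I_\mathcal{A}&M\\0&0\end{smallmatrix}\right]$, $T=\left[\begin{smallmatrix}I_\mathcal{A}-MN&0\\N&0\end{smallmatrix}\right]$ and its symmetric partner, exactly as in Step 2 of Lemma \ref{202}, now yielding $a_{11}(MN)=c_{12}(M)N+Md_{21}(N)$ and $b_{22}(NM)=Nc_{12}(M)+d_{21}(N)M$ with the $b_{11},a_{22}$ terms already known to vanish. The main obstacle I anticipate is bookkeeping: the equations at $I$ involve many more cross terms than at $0$ (as already visible in the unwieldy displays \eqref{3009}--\eqref{3012} of Lemma \ref{302}), so the delicate part is choosing the substitutions $B=0$, $A=I_\mathcal{A}$, $B=I_\mathcal{B}$ in the right order to isolate each unknown, and then invoking the correct nonvanishing scalar—$m+n$, $m-n$, or $3mn(m+n)$—to divide through. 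Since each targeted identity is linear in the unknown map after these specializations, no deeper structural input beyond faithfulness (deferred to the theorem itself) is needed at the lemma stage, and I would close by assembling the displayed matrix form from the pieces collected in the three steps.
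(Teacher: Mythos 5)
Your overall framework --- test pairs $(S,T)$ with $ST=I$, the integer-$k$ trick to pass from invertible to arbitrary arguments, and the Step-1 substitution $S=A\oplus B$, $T=A^{-1}\oplus B^{-1}$ (which does kill $a_{22}$ and $b_{11}$ at the outset, as in the paper) --- is the right one. But there is a genuine gap: several of your concrete substitutions have product $I_\mathcal{A}\oplus 0$ rather than $I$, and since $\delta$ is only assumed $(m,n)$-derivable at $I$, the defining identity gives no information whatsoever at such pairs. Your pair $S=\left[\begin{smallmatrix}A&AM\\0&0\end{smallmatrix}\right]$, $T=\left[\begin{smallmatrix}A^{-1}&-MB\\0&B\end{smallmatrix}\right]$ multiplies to $I_\mathcal{A}\oplus 0$; the needed upgrade, which you mention only in passing (``inserting the $I_\mathcal{B}$ corner'') and never carry out, is the paper's pair $S=\left[\begin{smallmatrix}A&AM\\0&B\end{smallmatrix}\right]$, $T=\left[\begin{smallmatrix}A^{-1}&-MB^{-1}\\0&B^{-1}\end{smallmatrix}\right]$ with $B$ invertible (note $-MB^{-1}$, not $-MB$), followed by the $kI_\mathcal{A}+A$, $kI_\mathcal{B}+B$ substitutions. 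That single computation already yields all of (i) --- the identities for $c_{12}(AM)$, $c_{12}(MB)$, $c_{21}(AM)=c_{21}(M)A$, $c_{21}(MB)=Bc_{21}(M)$, \emph{and} the nilpotency relations $c_{21}(M)M=0=Mc_{21}(M)$, which emerge as cross terms --- so no bespoke substitution for the nilpotency relations is needed.

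The bespoke substitutions you do propose fail outright. For $c_{21}(M)M=0$ you take $S=\left[\begin{smallmatrix}I_\mathcal{A}&M\\0&0\end{smallmatrix}\right]$, $T=\left[\begin{smallmatrix}I_\mathcal{A}&0\\-c_{21}(M)&0\end{smallmatrix}\right]$: here $ST=\left[\begin{smallmatrix}I_\mathcal{A}-Mc_{21}(M)&0\\0&0\end{smallmatrix}\right]$, which is not $I$, and is not even $I_\mathcal{A}\oplus 0$ unless $Mc_{21}(M)=0$ --- the very relation you are trying to prove, so the choice is circular. Likewise, for (iii) you reuse the Section 3 pair $S=\left[\begin{smallmatrix}I_\mathcal{A}&M\\0&0\end{smallmatrix}\right]$, $T=\left[\begin{smallmatrix}I_\mathcal{A}-MN&0\\N&0\end{smallmatrix}\right]$, whose product is $I_\mathcal{A}\oplus 0$; that is admissible for Lemma \ref{302} but useless here. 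The paper instead uses the pair $S=\left[\begin{smallmatrix}-I_\mathcal{A}-MN&-M\\N&I_\mathcal{B}\end{smallmatrix}\right]$, $T=\left[\begin{smallmatrix}-I_\mathcal{A}&-M\\N&I_\mathcal{B}+NM\end{smallmatrix}\right]$, for which one checks directly that $ST=I$, and reads (iii) off the resulting matrix identity (the maps $a_{22}$, $b_{11}$ having already been shown to vanish in Step 1). Constructing such full witnesses with $ST=I$ --- not products landing at the idempotent $I_\mathcal{A}\oplus 0$ --- is precisely the nontrivial content that distinguishes this lemma from Lemmas \ref{202} and \ref{302}, and it is the part your proposal leaves unresolved.
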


\begin{proof}
Since the proof is similar to Lemma \ref{302}, we will just sketch the proof.

For every invertible element $A\in\mathcal{A}$ and every invertible element $B\in\mathcal{B}$,
taking $S=A\oplus B$
and $T=A^{-1}\oplus B^{-1}$ gives
\begin{eqnarray}
&&b_{11}(B)=0~~~\textrm{and}~~~a_{11}(I_\mathcal{A})=0\label{4005}\\
&&a_{22}(A)=0~~~\textrm{and}~~~b_{22}(I_\mathcal{B})=0\label{4006}\\
&&a_{12}(A)=AM_0~~~\textrm{and}~~~b_{12}(B)=-M_0B\label{4008}\\
&&a_{21}(A)=N_0A~~~\textrm{and}~~~b_{21}(B)=-BN_0\label{4009}
\end{eqnarray}
for every $A\in \mathcal{A}$ and $B\in \mathcal{B}$, where $M_0=a_{12}(I_\mathcal{A})$ and $N_0=a_{21}(I_\mathcal{A})$.

For every invertible element $A\in\mathcal{A}$, every invertible element $B\in\mathcal{B}$ and every $M\in\mathcal{M}$,
letting $S=\left[\begin{array}{cc}A & AM \\0 & B \\\end{array}\right]$
and $T=\left[\begin{array}{cc}A^{-1} & -MB^{-1} \\0 & B^{-1} \\\end{array}\right]$ yields
\begin{eqnarray}
&&Mc_{21}(M)=0~~~\textrm{and}~~~c_{21}(M)M=0\label{4015}\\
&&c_{11}(M)=-MN_0~~~\textrm{and}~~~c_{22}(M)=N_0M\label{4016}\\
&&c_{12}(AM)=a_{11}(A)M+Ac_{12}(M)\label{4017}\\
&&c_{12}(MB)=Mb_{22}(B)+c_{12}(M)B\label{4018}\\
&&c_{21}(AM)=c_{21}(M)A~~~\textrm{and}~~~c_{21}(MB)=Bc_{21}(M)
\end{eqnarray}
for every $A\in \mathcal{A}$, $B\in \mathcal{B}$ and every $M\in \mathcal{M}$.

Symmetrically, by considering $S=\left[\begin{array}{cc}A & 0 \\NA & B \\\end{array}\right]$
and $T=\left[\begin{array}{cc}A^{-1} & 0 \\-B^{-1}N & B^{-1} \\\end{array}\right]$, we arrive at
\begin{eqnarray}
&&d_{11}(N)=-M_0N~~~\mathrm{and}~~~d_{22}(N)=NM_0,\\
&&d_{12}(N)N=0~~~\mathrm{and}~~~Nd_{12}(N)=0,\\
&&d_{12}(NA)=Ad_{12}(N)~~~\mathrm{and}~~~d_{12}(BN)=d_{12}(N)B,\\
&&d_{21}(NA)=Na_{11}(A)+d_{21}(N)A,\label{4020}\\
&&d_{21}(BN)=b_{22}(B)N+Bd_{21}(N),\label{4022}
\end{eqnarray}
for all $A\in \mathcal{A}$, $B\in \mathcal{B}$, $M\in \mathcal{M}$ and $N\in \mathcal{N}$.

For any $N\in\mathcal{N}$ and $M\in\mathcal{M}$,
setting $S=\left[\begin{array}{cc}-I_\mathcal{A}-MN & -M \\N & I_\mathcal{B} \\\end{array}\right]$ and
$T=\left[\begin{array}{cc}-I_\mathcal{A}& -M \\N &I_\mathcal{B}+NM \\\end{array}\right]$ leads to
\begin{eqnarray}
a_{11}(MN)=c_{12}(M)N+Md_{21}(N)~~~\textrm{and}~~~b_{22}(NM)=Nc_{12}(M)+d_{21}(N)M\label{4033}
\end{eqnarray}
for all $M\in \mathcal{M}$ and $N\in \mathcal{N}$.\

By (\ref{4005})-(\ref{4033}), the proof is complete.
\end{proof}

\noindent\textbf{Proof of Theorem \ref{401}}: The proof is analogous to Theorem \ref{201}, we now only refer to Lemma \ref{402}
instead of Lemma \ref{202} and we leave it to the readers.

\begin{corollary}
Let $\mathcal{A}$ be a unital prime ring of characteristic neither $|3mn(m+n)|$ nor $|m-n|$.
Assume that $\mathcal{A}$ contains $\frac{1}{2}I$ and a non-trivial idempotent $P$, and
for every $A\in \mathcal{A}$, there exists an integer $k$ such that $kI_\mathcal{A}+A$ is invertible in $\mathcal{A}$.
If $\delta$ is an (\textit{m, n})-derivable mapping at $I$ from $\mathcal{A}$ into itself, then $\delta$ is a Jordan derivation.
\end{corollary}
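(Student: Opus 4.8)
The plan is to realize the prime ring $\mathcal{A}$ as a generalized matrix algebra and then invoke Theorem~\ref{401} directly. Using the non-trivial idempotent $P$ and its complement $P^\perp=I-P$, the Peirce decomposition writes every element of $\mathcal{A}$ uniquely as a sum of components in $P\mathcal{A}P$, $P\mathcal{A}P^\perp$, $P^\perp\mathcal{A}P$ and $P^\perp\mathcal{A}P^\perp$, so that
$$\mathcal{A}\cong\left[\begin{array}{cc} P\mathcal{A}P & P\mathcal{A}P^\perp \\ P^\perp\mathcal{A}P & P^\perp\mathcal{A}P^\perp\end{array}\right]=:\mathcal{U},$$
with $\mathcal{A}'=P\mathcal{A}P$, $\mathcal{B}'=P^\perp\mathcal{A}P^\perp$, $\mathcal{M}=P\mathcal{A}P^\perp$ and $\mathcal{N}=P^\perp\mathcal{A}P$. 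Under this identification the unit of $\mathcal{U}$ is $P\oplus P^\perp=I$, so a mapping that is $(m,n)$-derivable at $I$ in $\mathcal{A}$ is exactly one that is $(m,n)$-derivable at $I$ in $\mathcal{U}$. Thus it suffices to check that $\mathcal{U}$ satisfies all hypotheses of Theorem~\ref{401}; the conclusion that $\delta$ is a Jordan derivation then follows at once.

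Several hypotheses are routine. Since $\mathcal{U}\cong\mathcal{A}$ as rings, $char(\mathcal{U})=char(\mathcal{A})$ is neither $|3mn(m+n)|$ nor $|m-n|$. Because $\frac{1}{2}I\in\mathcal{A}$, the elements $\frac{1}{2}P=P(\frac{1}{2}I)P$ and $\frac{1}{2}P^\perp=P^\perp(\frac{1}{2}I)P^\perp$ serve as $\frac{1}{2}I_{\mathcal{A}'}$ and $\frac{1}{2}I_{\mathcal{B}'}$. For faithfulness I would use primeness as follows: if $A\in\mathcal{A}'$ satisfies $A\mathcal{M}=0$, then $PAP\,\mathcal{A}\,P^\perp=0$, and since $P^\perp\neq0$ primeness forces $PAP=0$, i.e. $A=0$; a symmetric computation on the right shows $\mathcal{M}$ is a faithful right $\mathcal{B}'$-module. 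Hence $\mathcal{M}$ is a faithful $(\mathcal{A}',\mathcal{B}')$-bimodule (in particular $\mathcal{M}\neq0$, so $\mathcal{U}$ is a genuine generalized matrix algebra).

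The one point deserving care is transferring the invertibility hypothesis, since Theorem~\ref{401} requires it inside the corner algebras $\mathcal{A}'$ and $\mathcal{B}'$ (with units $P$ and $P^\perp$), whereas the corollary only assumes it in $\mathcal{A}$ (with unit $I$). I would first record the elementary fact that a block-diagonal element $D_1\oplus D_2\in\mathcal{U}$ is invertible in $\mathcal{A}$ if and only if $D_1$ is invertible in $\mathcal{A}'$ and $D_2$ is invertible in $\mathcal{B}'$: writing out $DY=YD=I$ block-wise shows the off-diagonal blocks of any inverse vanish and the diagonal blocks are two-sided inverses of $D_1$ and $D_2$ inside the corners. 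Given $A\in\mathcal{A}'$, I would apply the hypothesis to $A+P^\perp\in\mathcal{A}$ to obtain an integer $k$ with $kI+A+P^\perp=(kP+A)\oplus((k+1)P^\perp)$ invertible in $\mathcal{A}$; by the block fact, $kP+A=kI_{\mathcal{A}'}+A$ is invertible in $\mathcal{A}'$. The symmetric argument applied to $B+P\in\mathcal{A}$ produces the required integer for every $B\in\mathcal{B}'$.

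With every hypothesis of Theorem~\ref{401} verified, that theorem yields that $\delta$ is a Jordan derivation on $\mathcal{U}\cong\mathcal{A}$, completing the proof. I expect the block-invertibility transfer to be the only genuine obstacle, all remaining steps being direct verifications. I would also note that the conclusion can in fact be strengthened to a derivation: the assumption $\frac{1}{2}I\in\mathcal{A}$ makes $\mathcal{A}$ a $2$-torsion free prime ring, on which every Jordan derivation is a derivation (\cite{prime}); the stated corollary, however, asks only for the Jordan derivation property.
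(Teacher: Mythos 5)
Your proposal is correct and follows exactly the route the paper intends: the paper states this corollary without proof, relying on the Peirce decomposition of $\mathcal{A}$ along $P$ into a generalized matrix algebra, faithfulness of $P\mathcal{A}P^\perp$ via primeness (as spelled out before Corollary~\ref{206}), and a direct application of Theorem~\ref{401}. Your careful block-diagonal argument transferring the invertibility hypothesis to the corner algebras fills in a step the paper leaves implicit, and your closing observation that $\frac{1}{2}I\in\mathcal{A}$ makes $\mathcal{A}$ $2$-torsion free, so Herstein's theorem would even upgrade the conclusion to a derivation, is also sound.
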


\begin{corollary}
Let $\mathcal{A}$ be a factor von Neumann algebra. If $\delta$ is an (m, n)-derivable mapping at $I$ from $\mathcal{A}$ into itself, then $\delta$ is a Jordan derivation.
\end{corollary}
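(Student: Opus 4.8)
My plan is to deduce this corollary from the preceding one (for unital prime rings) by checking that a factor von Neumann algebra $\mathcal{A}$ meets every one of its hypotheses; once that verification is done, the conclusion that $\delta$ is a Jordan derivation is immediate. So the real work consists entirely of confirming, in turn, primeness, the characteristic restrictions, the presence of $\tfrac{1}{2}I$, the existence of a non-trivial idempotent, and the invertibility condition on $kI_\mathcal{A}+A$.

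First I would invoke the standard fact---already used in the paragraph preceding Corollary~\ref{207}---that every factor von Neumann algebra is a prime ring, so primeness is free. Since $\mathcal{A}$ is an algebra over $\mathbb{C}$, it is torsion-free as an additive group, hence of characteristic $0$; consequently it can equal neither of the nonzero integers $|3mn(m+n)|$ (recall $m,n\neq0$ and $m+n\neq0$ throughout this section) nor $|m-n|$, and moreover $\tfrac{1}{2}I\in\mathcal{A}$ automatically. The required non-trivial idempotent is supplied by any non-trivial projection $P\in\mathcal{A}$, which exists because a factor other than $\mathbb{C}$ has an abundance of projections; the degenerate case $\mathcal{A}=\mathbb{C}$ is disposed of by a direct computation, namely the defining identity at $I$ forces $(m+n)\delta(I)=0$, so $\delta(I)=0$ and hence $\delta=0$ by $\mathbb{C}$-linearity (using $m+n\neq0$).

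The last hypothesis to verify is that for each $A\in\mathcal{A}$ some $kI_\mathcal{A}+A$ is invertible, and here I would use the boundedness of the spectrum: in a $C^{*}$-algebra $\sigma(A)$ lies in the closed disc of radius $\|A\|$, so any integer $k>\|A\|$ satisfies $-k\notin\sigma(A)$, i.e.\ $kI_\mathcal{A}+A=A-(-k)I_\mathcal{A}$ is invertible. With all hypotheses confirmed, the preceding corollary yields that $\delta$ is a Jordan derivation. I expect no genuine obstacle: the argument is a verification of hypotheses, and the only two points needing a word of justification---that factors are prime and that the spectral-radius bound furnishes the invertibility condition---are standard, exactly paralleling the way Corollary~\ref{207} and Corollary~\ref{304} reduce to their prime-ring analogues.
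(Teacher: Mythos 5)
Your proposal is correct and is essentially the paper's own (implicit) argument: the corollary is stated without proof precisely because it reduces to the preceding prime-ring corollary, using the facts---noted earlier in the paper---that factor von Neumann algebras are prime, contain non-trivial idempotents and $\tfrac12 I$, have characteristic $0$ as complex algebras, and satisfy the invertibility hypothesis since $kI_\mathcal{A}+A$ is invertible for any integer $k>\|A\|$. Your extra care with the degenerate case $\mathcal{A}=\mathbb{C}$ (where no non-trivial idempotent exists, and a direct computation gives $\delta=0$) is a point the paper glosses over, and it is handled correctly.
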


\begin{corollary}
Let $\mathcal{U}=Tri(\mathcal{A},\mathcal{M},\mathcal{B})$ be an upper triangular algebra
such that $\mathcal{M}$ is a \textit{faithful} ($\mathcal{A}$,$\mathcal{B}$)-bimodule.
Assume that $\frac{1}{2}I_\mathcal{A}\in \mathcal{A}$, $\frac{1}{2}I_\mathcal{B}\in \mathcal{B}$, and for every $A\in \mathcal{A}$, every $B\in \mathcal{B}$,
there exists an integer $k$ such that $kI_\mathcal{A}+A$ is invertible in $\mathcal{A}$ and $kI_\mathcal{B}+B$ is invertible in $\mathcal{B}$.
Let $\delta$ be an (m, n)-derivable mapping at $I$ from $\mathcal{U}$ into itself.\

$\mathrm{(1) }$~~~If $(m+n)(m-n)\neq 0$ and $char(\mathcal{U})\neq |3mn(m+n)(m-n)|$, then $\delta$ is a derivation.

$\mathrm{(2) }$~~~If $m-n=0$ and $char(\mathcal{U})\neq |6m|$, then $\delta$ is a derivation.
\end{corollary}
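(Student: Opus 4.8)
The plan is to deduce this corollary from Theorem~\ref{401} together with the fact recorded earlier from \cite{JDOTA}, that every Jordan derivation of an upper triangular algebra is a derivation. An upper triangular algebra $\mathcal{U}=Tri(\mathcal{A},\mathcal{M},\mathcal{B})$ is precisely the degenerate generalized matrix algebra with $\mathcal{N}=0$, so the standing hypotheses of Theorem~\ref{401} (faithfulness of $\mathcal{M}$, the presence of $\frac{1}{2}I_\mathcal{A}$ and $\frac{1}{2}I_\mathcal{B}$, and the invertibility of $kI_\mathcal{A}+A$ and $kI_\mathcal{B}+B$) are exactly those assumed here. Thus in both cases the strategy is the same two-step chain: first obtain that $\delta$ is a Jordan derivation, then promote it to a derivation via \cite{JDOTA}.

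For part (1), the point is that the single bound $char(\mathcal{U})\neq|3mn(m+n)(m-n)|$ already supplies both characteristic hypotheses needed to invoke Theorem~\ref{401}, namely $char(\mathcal{U})\neq|3mn(m+n)|$ and $char(\mathcal{U})\neq|m-n|$. Indeed, with the notion of characteristic fixed in Section~1, absence of $l$-torsion implies absence of $k$-torsion whenever $k\mid l$; since both $|3mn(m+n)|$ and $|m-n|$ divide $|3mn(m+n)(m-n)|$, the two weaker conditions follow at once. As $(m+n)(m-n)\neq0$ in particular forces $m+n\neq0$, Theorem~\ref{401} gives that $\delta$ is a Jordan derivation, and \cite{JDOTA} finishes the argument.

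Part (2) is the delicate case and is where I expect the main obstacle. When $m=n$ the quantity $|m-n|$ collapses to $0$, so the hypothesis $char(\mathcal{U})\neq|m-n|$ of Theorem~\ref{401} becomes vacuous and cannot be used as stated; worse, naively substituting $m=n$ into $|3mn(m+n)|$ produces $|6m^{3}|$, not the $|6m|$ assumed in the corollary. The way around this is to reinspect the proof of Lemma~\ref{402} under the specialization $\mathcal{N}=0$: every step that relies on $char(\mathcal{U})\neq|m-n|$ is used only to conclude that the cross maps $c_{21}\colon\mathcal{M}\to\mathcal{N}$ and $d_{12}\colon\mathcal{N}\to\mathcal{M}$, together with the $\mathcal{N}$-valued components $d_{11},d_{21},d_{22}$, vanish, and in an upper triangular algebra these vanish automatically because $\mathcal{N}=0$. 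Deleting those steps, the surviving relations of Lemma~\ref{402}(i)--(iii) reduce to exactly the data defining a Jordan derivation on $Tri(\mathcal{A},\mathcal{M},\mathcal{B})$, and the only torsion bound still invoked in the remaining divisions (the cancellations at $S=A\oplus B$, $T=A^{-1}\oplus B^{-1}$ and at the mixed elementary matrices, noting that $char(\mathcal{U})\neq|6m|$ already yields absence of $m$-torsion) is $char(\mathcal{U})\neq|6m|$. With $\delta$ thereby shown to be a Jordan derivation, \cite{JDOTA} again upgrades it to a derivation, completing the proof. I would leave implicit the entry-wise matrix expansions, which are identical to those already carried out in Lemma~\ref{402}; the genuinely new content is the verification that $\mathcal{N}=0$ removes the dependence on $|m-n|$ and reduces the remaining characteristic requirement to $|6m|$.
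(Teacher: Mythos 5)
Your proposal is correct and follows the same route the paper intends: the paper states this corollary without proof, treating it as an immediate consequence of Theorem \ref{401} (which yields that $\delta$ is a Jordan derivation) together with the fact from \cite{JDOTA} that every Jordan derivation of an upper triangular algebra is a derivation, and your divisibility observation that $char(\mathcal{U})\neq|3mn(m+n)(m-n)|$ subsumes both hypotheses $char(\mathcal{U})\neq|3mn(m+n)|$ and $char(\mathcal{U})\neq|m-n|$ is exactly the intended reading of part (1). Your treatment of part (2) --- noting that $m=n$ makes $|m-n|=0$ so Theorem \ref{401} does not apply verbatim, and rechecking the proof of Lemma \ref{402} under $\mathcal{N}=0$ to see that every use of $|m-n|$-cancellation concerns only the maps $c_{21}$, $d_{12}$ and the $\mathcal{N}$-valued components, which vanish automatically --- is a genuine and needed supplement to an argument the paper leaves entirely implicit.
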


\begin{corollary}
Let $\mathcal{N}$ be a nest on a Hilbert space $H$ and alg$\mathcal{N}$ be the associated algebra.
If $\delta$ is an (m, n)-derivable mapping at $I$ from $alg\mathcal{N}$ into itself, then $\delta$ is a derivation.
\end{corollary}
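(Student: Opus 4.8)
The plan is to deduce the result from the two corollaries already obtained in this section, splitting according to whether the nest is trivial. First I would dispose of the trivial nest $\mathcal{N}=\{0,H\}$, in which case $\mathrm{alg}\,\mathcal{N}=B(H)$ is a factor von Neumann algebra; the corollary on factor von Neumann algebras proved above then shows that $\delta$ is a Jordan derivation. Because $B(H)$ is a $2$-torsion free prime ring, every Jordan derivation on it is a derivation (as recalled in the discussion preceding Corollary \ref{206}, citing \cite{prime}), so $\delta$ is a derivation in this case.

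For a non-trivial nest $\mathcal{N}$ I would fix a non-trivial projection $P\in\mathcal{N}$ and write $\mathcal{A}=P\,\mathrm{alg}\,\mathcal{N}\,P$, $\mathcal{M}=P\,\mathrm{alg}\,\mathcal{N}\,(I-P)$ and $\mathcal{B}=(I-P)\,\mathrm{alg}\,\mathcal{N}\,(I-P)$, so that $\mathrm{alg}\,\mathcal{N}=\mathrm{Tri}(\mathcal{A},\mathcal{M},\mathcal{B})$ with $\mathcal{M}$ a faithful $(\mathcal{A},\mathcal{B})$-bimodule, exactly as in the construction preceding Corollary \ref{209}. The aim is then to apply the corollary on upper triangular algebras established above for mappings $(m,n)$-derivable at $I$.

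The only thing to check is that the hypotheses of that corollary are met. Since all the algebras here are complex, $\tfrac12 I_{\mathcal{A}}=\tfrac12 P$ and $\tfrac12 I_{\mathcal{B}}=\tfrac12(I-P)$ belong to $\mathcal{A}$ and $\mathcal{B}$, and the characteristic is $0$, so every condition of the form $char(\mathcal{U})\neq|\cdots|$ holds automatically whenever the relevant integer is non-zero; under the standing hypotheses $m\neq0$, $n\neq0$, $m+n\neq0$ this is the case in both situations covered by that corollary, namely $(m+n)(m-n)\neq0$ and $m=n$, which together exhaust all admissible pairs. The invertibility requirement---that for each $A$ there be an integer $k$ with $kI_{\mathcal{A}}+A$ invertible in $\mathcal{A}$, and likewise in $\mathcal{B}$---follows since $\mathcal{A}$ and $\mathcal{B}$ are unital Banach algebras: for any integer $k>\|A\|$ we have $\|A/k\|<1$, so $I_{\mathcal{A}}+A/k$, and hence $kI_{\mathcal{A}}+A$, is invertible by the Neumann series. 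With the hypotheses verified, the upper triangular algebra corollary gives that $\delta$ is a derivation, and the two cases together finish the proof.

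I do not expect a genuine conceptual obstacle, since the statement is essentially a packaging of the preceding corollaries. The one point that requires care is confirming that the corner algebras $\mathcal{A}$ and $\mathcal{B}$ satisfy the invertibility hypothesis as abstract unital algebras, which is precisely what the Neumann-series argument above supplies; the remainder is a direct citation of results already proved.
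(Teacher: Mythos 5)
Your proof is correct and takes essentially the same route the paper intends: this corollary is left as an immediate application of the two preceding corollaries of Section 4 (the factor von Neumann algebra case for a trivial nest, upgraded from Jordan derivation to derivation via Herstein's theorem for $2$-torsion free prime rings, and the upper triangular algebra corollary for a non-trivial nest, whose cases $(m+n)(m-n)\neq 0$ and $m=n$ exhaust all admissible pairs under the standing assumption $m+n\neq 0$). Your verification of the hypotheses---$\tfrac{1}{2}I_{\mathcal{A}}\in\mathcal{A}$, $\tfrac{1}{2}I_{\mathcal{B}}\in\mathcal{B}$, the vacuity of the characteristic conditions in characteristic zero, and the Neumann-series argument giving invertibility of $kI_{\mathcal{A}}+A$ in the norm-closed corner algebras---correctly supplies exactly the details the paper leaves to the reader.
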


\section{(\textit{m, n})-derivable mappings at 0 of CSL algebras}
\

In this section, we study (\textit{m, n})-derivable mappings at 0 on CSL algebras. Assume that $m+n\neq 0$.
We proceed with the following lemmas.

\begin{lemma}\label{501}
Let $\mathcal{A}$ be a unital algebra with $char(\mathcal{A})\neq |m+n|$. If $\delta$ is an (\textit{m, n})-derivable mapping at 0
from $\mathcal{A}$ into itself and $\delta(I)=0$,
then for each idempotent $P\in \mathcal{A}$, $\delta(P)=\delta(P)P+P\delta(P).$
\end{lemma}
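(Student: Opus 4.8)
The plan is to evaluate the defining identity on the single pair of elements furnished by an idempotent and its complement. Writing $Q = I - P$, I would first observe that because $P$ is idempotent one has both $PQ = P - P^2 = 0$ and $QP = P - P^2 = 0$. Thus $PQ$ is exactly the point $0$ at which $\delta$ is assumed to be $(m,n)$-derivable, and, crucially, $QP = 0$ as well, so that both terms $\delta(PQ)$ and $\delta(QP)$ occurring in the $(m,n)$-derivability relation vanish.

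Concretely, I would apply the relation
$$m\delta(AB) + n\delta(BA) = m\delta(A)B + mA\delta(B) + n\delta(B)A + nB\delta(A)$$
with $A = P$ and $B = Q = I - P$. Since $PQ = QP = 0$, the left-hand side is $m\delta(0) + n\delta(0) = 0$, and what remains is
$$0 = m\delta(P)Q + mP\delta(Q) + n\delta(Q)P + nQ\delta(P).$$

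The second step is to eliminate $\delta(Q)$ in favour of $\delta(P)$. By linearity together with the hypothesis $\delta(I) = 0$ we have $\delta(Q) = \delta(I) - \delta(P) = -\delta(P)$. Substituting this and $Q = I - P$ into the displayed identity and expanding each product, the $\delta(P)$, the $\delta(P)P$-type, and the $P\delta(P)$-type contributions each collect with a common scalar, giving
$$(m+n)\bigl(\delta(P) - \delta(P)P - P\delta(P)\bigr) = 0.$$

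Finally I would invoke the standing hypothesis $char(\mathcal{A}) \neq |m+n|$, which says precisely that $(m+n)X = 0$ forces $X = 0$ for every $X \in \mathcal{A}$; applied to the bracketed element this yields $\delta(P) = \delta(P)P + P\delta(P)$, as desired. I do not anticipate any genuine obstacle here: the argument is a single substitution followed by a cancellation. The only place requiring care is the bookkeeping in the expansion, namely verifying that the coefficients of $\delta(P)$, $\delta(P)P$, and $P\delta(P)$ truly recombine into the single factor $(m+n)$, so that the characteristic assumption can legitimately be used to divide it out.
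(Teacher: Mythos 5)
Your proposal is correct and follows essentially the same route as the paper: both apply the $(m,n)$-derivability relation to the pair $A=P$, $B=I-P$ (noting $P(I-P)=(I-P)P=0$), use $\delta(I)=0$ to replace $\delta(I-P)$ by $-\delta(P)$, collect the terms into $(m+n)\bigl(\delta(P)-\delta(P)P-P\delta(P)\bigr)=0$, and cancel $(m+n)$ via the characteristic hypothesis. Your bookkeeping checks out exactly as in the paper's displayed computation.
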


\begin{proof}
For any idempotent $P\in \mathcal{A}$, $P(I-P)=0$. Then we have
\begin{eqnarray*}
0&=&m\delta(P(I-P))+n\delta((I-P)P)\\
&=&m\delta(P)(I-P)+mP\delta(I-P)+n\delta(I-P)P+n(I-P)\delta(P)\\
&=&(m+n)\delta(P)-(m+n)\delta(P)P-(m+n)P\delta(P).
\end{eqnarray*}
Thus
$\delta(P)=\delta(P)P+P\delta(P).$
\end{proof}

\begin{lemma}\label{502}
Let $\mathcal{A}$ and $\delta$ be as in Lemma \ref{501} and $\delta(I)=0$.
Then for each idempotent $P\in \mathcal{A}$ and every element $A\in \mathcal{A}$, we have\\
$\mathrm{(i)}~~ \delta(PA+AP)=\delta(P)A+P\delta(A)+\delta(A)P+A\delta(P);$\\
$\mathrm{(ii)}~~ \delta(PAP)=\delta(P)AP+P\delta(A)P+PA\delta(P).$
\end{lemma}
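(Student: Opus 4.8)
The plan is to reduce everything to the defining identity $m\delta(XY)+n\delta(YX)=m\delta(X)Y+mX\delta(Y)+n\delta(Y)X+nY\delta(X)$ evaluated at pairs with $XY=0$, feeding in two facts already available: since $\delta(I)=0$ and $\delta$ is linear, $\delta(I-P)=-\delta(P)$; and Lemma \ref{501} gives $\delta(P)=\delta(P)P+P\delta(P)$, whence multiplying on both sides by $P$ (resp. by $P^\perp:=I-P$) yields $P\delta(P)P=0$ and $P^\perp\delta(P)P^\perp=0$. Writing the Peirce decomposition $A=PAP+PAP^\perp+P^\perp AP+P^\perp AP^\perp$ and using that both (i) and (ii) are linear in $A$, it suffices to prove each identity separately on the four corners. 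Throughout I will use the one-sided annihilations $PAP^\perp\cdot P=0$, $P^\perp\cdot PAP^\perp=0$, and their analogues, so that each corner pairs with $P$ or $P^\perp$ to form genuine zero products.

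For (i), note that it says exactly $\delta(P\circ A)=\delta(P)\circ A+P\circ\delta(A)$, where $X\circ Y=XY+YX$. On the corner $C=P^\perp AP^\perp$ I apply the identity to the pairs $(P,C)$ and $(C,P)$ (all four products vanish) and add; the factor $m+n$ survives and, since $char\neq|m+n|$, may be cancelled to give $\delta(P)C+C\delta(P)+P\delta(C)+\delta(C)P=0$, which is (i) because $P\circ C=0$. The corner $D=PAP$ is dual, using the pairs $(P^\perp,D)$, $(D,P^\perp)$ and $\delta(P^\perp)=-\delta(P)$; adding and cancelling $m+n$ gives $\delta(P)D+D\delta(P)=P^\perp\delta(D)+\delta(D)P^\perp$, and rewriting the right side as $2\delta(D)-P\delta(D)-\delta(D)P$ turns this into (i) for $D$. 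For the off-diagonal corner $E=PAP^\perp$ I use the pairs $(E,P)$ and $(P^\perp,E)$ but take the \emph{difference} of the two resulting relations; after collecting terms the surviving factor is again $m+n$, yielding $\delta(E)=\delta(E)P+P\delta(E)+E\delta(P)+\delta(P)E$, which is (i) since $P\circ E=E$. The corner $P^\perp AP$ is symmetric.

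For (ii), I split into corners again and extract the needed blocks by sandwiching the same relations between $P$'s and $P^\perp$'s. On $P^\perp AP^\perp$ the pair $(P,P^\perp AP^\perp)$, sandwiched by $P$ on both sides, kills the $\delta(P)$-terms and leaves $(m+n)P\delta(P^\perp AP^\perp)P=0$, giving (ii) there. On $PAP^\perp$, sandwiching the $(PAP^\perp,P)$-relation by $P$ isolates $m\bigl(P\delta(PAP^\perp)P+PAP^\perp\delta(P)P\bigr)=0$; since $m\neq0$ and $PAP^\perp\delta(P)P^\perp=PA\,(P^\perp\delta(P)P^\perp)=0$, we have $PAP^\perp\delta(P)P=PAP^\perp\delta(P)$, and this is exactly (ii) on that corner ($P^\perp AP$ being symmetric). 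The corner $D=PAP$ is delicate: sandwiching by $P$ produces no information about $P\delta(D)P$, so I instead sandwich the $(P^\perp,D)$-relation by $P^\perp$ on both sides, which annihilates the $D$-terms and leaves $(m+n)P^\perp\delta(D)P^\perp=0$, hence $P^\perp\delta(D)P^\perp=0$; feeding this into the relation $\delta(P)D+D\delta(P)=P^\perp\delta(D)+\delta(D)P^\perp$ already produced in (i) identifies the off-$(1,1)$ part of $\delta(D)$ and yields $\delta(D)=\delta(P)D+P\delta(D)P+D\delta(P)$.

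The main obstacle is precisely this diagonal $(1,1)$ corner in (ii): the ``obvious'' sandwich by $P$ collapses to a tautology and reveals nothing about $P\delta(PAP)P$, so one must instead sandwich by the complementary idempotent $P^\perp$ to force $P^\perp\delta(PAP)P^\perp=0$ and then recombine with the identity extracted in (i). It is worth noting that this detour keeps the argument free of any $char\neq2$ hypothesis: a more naive route through the Jordan triple identity $PAP=\tfrac12\bigl(P\circ(P\circ A)-P\circ A\bigr)$ would reprove (ii) from (i), but only after dividing by $2$, whereas the corner-by-corner cancellations above use only the invertibility of $m+n$ and the standing assumption $m,n\neq0$.
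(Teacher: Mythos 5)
Your corner-by-corner argument does reach both identities and is a genuinely different proof from the paper's, but one step is not covered by the stated hypotheses. At the corners $E=PAP^\perp$ and $F=P^\perp AP$ in part (ii) you arrive at $m\bigl(P\delta(E)P+E\delta(P)P\bigr)=0$ and cancel $m$ ``since $m\neq 0$''. Under the paper's torsion-style definition of characteristic, the hypothesis inherited from Lemma \ref{501} only licenses cancelling $m+n$: take $m=2$, $n=1$ and an algebra of characteristic $2$; then $\operatorname{char}(\mathcal{A})\neq |m+n|=3$ holds, yet $2x=0$ does not force $x=0$, so this step fails as written. The repair is immediate and stays inside your own framework: sandwich the companion relation --- for $E$, the $(P^\perp,E)$-identity --- by $P\cdot P$ as well; since $EP=PP^\perp=P^\perp P=0$, this yields $n\bigl(P\delta(E)P+E\delta(P)P\bigr)=0$, and adding the two sandwiches gives $(m+n)\bigl(P\delta(E)P+E\delta(P)P\bigr)=0$, which your standing assumption does cancel; likewise at $F$. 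Every other cancellation in your argument is by $m+n$ (or uses $x=2x\Rightarrow x=0$, which needs no hypothesis), and I checked that the four corner instances of each identity recombine by linearity exactly as you claim, so with this one-line fix the proof is complete.

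For comparison, the paper never decomposes $A$ at all. For (i) it applies the defining identity to the zero products $P\cdot(I-P)A=0$ and $(I-P)\cdot PA=0$ with $A$ arbitrary, combines the two to get the mixed relation $m\delta(PA)+n\delta(AP)=m\delta(P)A+mP\delta(A)+n\delta(A)P+nA\delta(P)$, adds the symmetric relation coming from $AP(I-P)=A(I-P)P=0$, and cancels $m+n$ once. For (ii) it substitutes $PA+AP$ for $A$ in (i), simplifies using Lemma \ref{501}, and ends at $2\delta(PAP)=2\bigl(\delta(P)AP+P\delta(A)P+PA\delta(P)\bigr)$ --- an implicit cancellation of $2$ that, strictly speaking, is also not covered by $\operatorname{char}\neq|m+n|$ (harmless for the complex CSL algebras to which Section 5 applies the lemma). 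So your closing remark is on target: once your $m$-cancellation is upgraded to an $(m+n)$-cancellation, the corner route establishes (ii) under exactly the stated hypothesis, which is marginally cleaner than the paper's substitution trick; the price is length, four Peirce cases plus sandwiches against the paper's two short computations and one substitution.
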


\begin{proof}
(i)~~~For any idempotent $P\in \mathcal{A}$, $P(I-P)A=(I-P)PA=0$. Then we have
\begin{eqnarray*}
&&m\delta(P(I-P)A)+n\delta((I-P)AP)\\
&&~~~~~~=m\delta(P)(I-P)A+mP\delta((I-P)A)+n\delta((I-P)A)P+n(I-P)A\delta(P)\\
&&~~~~~~=m\delta(P)A-m\delta(P)PA+mP\delta(A)-mP\delta(PA)+n\delta(A)P-n\delta(PA)P+nA\delta(P)-nPA\delta(P),
\end{eqnarray*}
and
\begin{eqnarray*}
&&m\delta((I-P)PA)+n\delta(PA(I-P))\\
&&~~~~~~=m\delta(I-P)PA+m(I-P)\delta(PA)+n\delta(PA)(I-P)+nPA\delta(I-P)\\
&&~~~~~~=(m+n)\delta(PA)-m\delta(P)PA-mP\delta(PA)-n\delta(PA)P-nPA\delta(P).
\leftline{}\end{eqnarray*}
Combining the two equations above gives
\begin{eqnarray}
m\delta(PA)+n\delta(AP)=m\delta(P)A+mP\delta(A)+n\delta(A)P+nA\delta(P).\label{50001}
\end{eqnarray}

Since $AP(I-P)=A(I-P)P=0$, with a similar proof of (\ref{50001}), we have
\begin{eqnarray}
m\delta(AP)+n\delta(PA)=n\delta(P)A+nP\delta(A)+m\delta(A)P+mA\delta(P).\label{50002}
\end{eqnarray}
Combining (\ref{50001}) and (\ref{50002}) yields $$\delta(PA+AP)=\delta(P)A+P\delta(A)+\delta(A)P+A\delta(P).$$
(ii)~~~Replacing $A$ by $PA+AP$ in (i), we have
\begin{eqnarray*}
&&\delta(P(PA+AP)+(PA+AP)P)\\
&&~~~~~=\delta(P)(PA+AP)+P\delta(PA+AP)+\delta(PA+AP)P+(PA+AP)\delta(P)\\
&&~~~~~=\delta(P)PA+2\delta(P)AP+P\delta(P)A+P\delta(A)+2P\delta(A)P+2PA\delta(P)+\delta(A)P\\
&&~~~~~~~~+A\delta(P)P+AP\delta(P)\\
&&~~~~~=2\delta(P)AP+2P\delta(A)P+2PA\delta(P)+\delta(P)A+P\delta(A)+\delta(A)P+A\delta(P)\\
&&~~~~~=2\delta(P)AP+2P\delta(A)P+2PA\delta(P)+\delta(PA+AP).
\end{eqnarray*}
Thus $$\delta(PAP)=\delta(P)AP+P\delta(A)P+PA\delta(P).$$
\end{proof}

\begin{corollary}\label{503}
Let $\mathcal{A}$ and $\delta$ be as in Lemma \ref{502} with $\delta(I)=0$. Suppose $\mathcal{B}$ is the subalgebra of $\mathcal{A}$
generated by all idempotents in $\mathcal{A}$. Then for any $T\in \mathcal{B}$ and any $A\in \mathcal{A}$,
$\delta(TA+AT)=\delta(T)A+T\delta(A)+\delta(A)T+A\delta(T)$.
\end{corollary}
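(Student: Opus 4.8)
The plan is to prove that $\mathcal{B}$ is contained in the set
\[
\mathcal{S}=\{T\in\mathcal{A}:\ \delta(TA+AT)=\delta(T)A+T\delta(A)+\delta(A)T+A\delta(T)\ \text{for every}\ A\in\mathcal{A}\}.
\]
Because $\delta$ is linear and the defining condition is linear in $T$, the set $\mathcal{S}$ is a linear subspace of $\mathcal{A}$, and Lemma \ref{502}(i) shows that every idempotent of $\mathcal{A}$ lies in $\mathcal{S}$. Since $\mathcal{B}$ is generated as an algebra by the idempotents, it suffices to prove that $\mathcal{S}$ is closed under multiplication; then $\mathcal{S}$ is a subalgebra containing all idempotents, whence $\mathcal{B}\subseteq\mathcal{S}$ and the corollary follows. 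So the whole problem reduces to the multiplicative closure of $\mathcal{S}$.

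The tool I would use for closure is a Jordan triple relation: for $T\in\mathcal{S}$ I aim to show $\delta(TAT)=\delta(T)AT+T\delta(A)T+TA\delta(T)$, the exact analogue of Lemma \ref{502}(ii) with the idempotent replaced by an element of $\mathcal{S}$, together with its bilinear polarization $\delta(TAU+UAT)=\delta(T)AU+T\delta(A)U+TA\delta(U)+\delta(U)AT+U\delta(A)T+UA\delta(T)$ for $T,U\in\mathcal{S}$. These feed into the purely associative identity
\[
(T\circ U)\circ A=T\circ(U\circ A)+U\circ(T\circ A)-2(TAU+UAT),\qquad X\circ Y:=XY+YX,
\]
after applying $\delta$: the two outer Jordan products are handled by $T,U\in\mathcal{S}$ and the sandwich term $TAU+UAT$ by the triple relation, and the right-hand side collapses to the Leibniz expression for $T\circ U$, giving closure of $\mathcal{S}$ under the Jordan product. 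Upgrading this to closure under the associative product is where Lemma \ref{502}(ii) re-enters: for generators that are idempotents one also controls genuine products $\delta(PXP)$, so I would run the closure argument inductively along products $P_{1}\cdots P_{k}$ of idempotents, peeling one factor at a time.

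The step I expect to be the main obstacle is producing the triple relation itself. The natural derivation, substituting $A\mapsto TA+AT$ into the Jordan relation for $T$ and simplifying (as in the passage from Lemma \ref{502}(i) to \ref{502}(ii)), does not isolate $\delta(TAT)$: it yields only one equation coupling $\delta(TAT)$ to the square term $\delta(T^{2}A+AT^{2})$, so the triple relation is in effect equivalent to knowing $T^{2}\in\mathcal{S}$. This is automatic when $T$ is idempotent, since then $T^{2}=T$ and Lemma \ref{502}(ii) supplies the relation outright, but for a general element of $\mathcal{S}$ it is not available, and the sandwich relations alone never reach an adjacent product such as $TUA$. Breaking this coupling is the crux: I would do so through the induction over idempotent products mentioned above, carrying both the Jordan and the triple relations through the induction and using the Peirce decomposition with respect to each idempotent $P$ (together with Lemma \ref{502}(i),(ii) applied to $P$ and $I-P$) to keep the off-diagonal corner terms under control. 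Managing this Peirce bookkeeping consistently is the delicate, computational part of the argument.
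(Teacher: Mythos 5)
Your proposal does not prove the corollary: everything beyond linearity is the passage from idempotents to products of idempotents, and that is exactly the step you leave as a plan. Your opening reduction is fine --- both sides of the identity are linear in $T$, so the set $\mathcal{S}$ is a subspace, and Lemma \ref{502}(i) places every idempotent in it --- and your diagnosis of the obstacle is accurate: substituting $A\mapsto TA+AT$ into the Jordan relation for $T\in\mathcal{S}$ gives $\delta(T^{2}A+AT^{2})+2\delta(TAT)=\cdots$, which collapses to the triple relation only when $T^{2}=T$ (this is precisely how the paper derives Lemma \ref{502}(ii)), and is otherwise coupled to $T^{2}\in\mathcal{S}$. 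But this very diagnosis undercuts your stated goal of showing that $\mathcal{S}$ is closed under multiplication: nothing in your text supports the claim that $\mathcal{S}$ is a subalgebra, and there is no reason to expect it for arbitrary elements, since membership in $\mathcal{S}$ is a one-variable Jordan-type condition and all known closure arguments exploit identities special to idempotents (Lemma \ref{501}, Lemma \ref{502}(ii), Peirce corners). Your associator identity $(T\circ U)\circ A=T\circ(U\circ A)+U\circ(T\circ A)-2(TAU+UAT)$ is correct and does reduce closure under the Jordan product to the mixed triple relation for the pair $(T,U)$, but you establish that mixed triple for no pair except $(P,P)$ with $P$ idempotent; even for two idempotents $P,Q$ it needs a genuine argument (polarization fails since $P+Q$ is not idempotent, and substituting $A\mapsto QA+AQ$ into Lemma \ref{502}(i) for $P$ yields only the coupled equation $\delta(PAQ+QAP)+\delta(PQA+AQP)=\cdots$), and the subsequent induction peeling factors from $P_{1}\cdots P_{k}$ is exactly the ``delicate, computational part'' you defer. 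Deferring it means the statement is not proved.

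For comparison, the paper offers no computation here at all, and as the corollary is actually used it needs none: in the proof of Lemma \ref{504} it is invoked only for $T=PT(I-P)=P-(P-PT(I-P))$, a \emph{difference of two idempotents}, i.e.\ for $T$ in the linear span of the idempotents, where the conclusion is immediate from Lemma \ref{502}(i) and linearity --- your first paragraph alone already covers every later application. The statement for the full generated subalgebra, which the paper asserts without proof, genuinely requires the induction over idempotent words in the style of \cite{JDCSL} that you sketch; so you have correctly located the real difficulty in the literal statement, but resolved neither it nor noticed that the span case suffices for the paper's purposes.
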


\begin{lemma}\label{504}
Let $\mathcal{L}$ be a CSL on $H$.
If $\delta$ is an (\textit{m, n})-derivable mapping at 0 from alg$\mathcal{L}$ into itself and $\delta(I)=0$,
then for all $S,T\in alg\mathcal{L}$ and $P\in \mathcal{L}$,\\
$\mathrm{(i)}$~~ $\delta(SPT(I-P))=\delta(S)PT(I-P)+S\delta(PT(I-P));$\\
$\mathrm{(ii)}$~~ $\delta(PS(I-P)T)=\delta(PS(I-P))T+PS(I-P)\delta(T).$
\end{lemma}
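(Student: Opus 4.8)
The plan is to exploit the commutativity of $\mathcal{L}$ to reduce both identities to the defining zero-product relation of an $(m,n)$-derivable map. Since $\mathcal{L}$ is a CSL, each $P\in\mathcal{L}$ commutes with every member of $\mathcal{L}$, so $P$ and $I-P$ both lie in $\mathrm{alg}\mathcal{L}$, and every $S\in\mathrm{alg}\mathcal{L}$ leaves $\mathrm{ran}(P)$ invariant; that is, $(I-P)SP=0$, equivalently $SP=PSP$ and $(I-P)S=(I-P)S(I-P)$. First I would record these relations together with $\delta(I-P)=-\delta(P)$, which follows from $\delta(I)=0$ and linearity.

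The key step is to realize each target as the nonvanishing ``cross term'' of a product that is zero. For (i), $\bigl[T(I-P)\bigr]\bigl[SP\bigr]=T\bigl[(I-P)SP\bigr]=0$ while $\bigl[SP\bigr]\bigl[T(I-P)\bigr]=SPT(I-P)$; for (ii), $\bigl[(I-P)T\bigr]\bigl[PS\bigr]=\bigl[(I-P)TP\bigr]S=0$ while $\bigl[PS\bigr]\bigl[(I-P)T\bigr]=PS(I-P)T$. Feeding each vanishing product into the identity $m\delta(XY)+n\delta(YX)=m\delta(X)Y+mX\delta(Y)+n\delta(Y)X+nY\delta(X)$ and using $\delta(0)=0$ yields $n\delta(SPT(I-P))$ (resp.\ $n\delta(PS(I-P)T)$) expressed through the block elements themselves and the \emph{auxiliary} values $\delta(SP),\delta(T(I-P))$ (resp.\ their duals).

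The remaining task is to massage these auxiliary contributions into exactly $\delta(S)\,PT(I-P)+S\,\delta(PT(I-P))$ and $\delta(PS(I-P))\,T+PS(I-P)\,\delta(T)$. Here Lemmas \ref{501} and \ref{502} do the accounting: Lemma \ref{501} controls the diagonal blocks, and the sandwich identity $\delta(PAP)=\delta(P)AP+P\delta(A)P+PA\delta(P)$ of Lemma \ref{502}(ii), combined with the invariance relations that annihilate products such as $T(I-P)\cdot SP$ and $(I-P)TP$, collapses the off-diagonal blocks. The cleanest way to organize this is to observe that $P+PT(I-P)$ and $P+PS(I-P)$ are idempotents, so $PT(I-P)$ and $PS(I-P)$ are differences of idempotents and hence lie in the subalgebra $\mathcal{B}$ generated by all idempotents; one then applies the appropriate one-sided Leibniz rule for members of $\mathcal{B}$ to read off (i) and (ii) simultaneously.

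The main obstacle is precisely this one-sided bookkeeping. The two relations for $\delta(PA)$ and $\delta(AP)$ derived in the proof of Lemma \ref{502} form a linear system with coefficient matrix $\left[\begin{smallmatrix} m & n \\ n & m \end{smallmatrix}\right]$; when $m\neq n$ this inverts to give genuine one-sided corner rules $\delta(EA)=\delta(E)A+E\delta(A)$ and $\delta(AE)=\delta(A)E+A\delta(E)$ for every idempotent $E$, which extend to products and, by linearity, to all of $\mathcal{B}$, making (i) and (ii) immediate via the two memberships above. The delicate case is $m=n$, where the system degenerates to the single Jordan identity of Corollary \ref{503} and no longer separates the two multiplication orders; there one cannot avoid extracting the off-diagonal block of $\delta$ on the target directly from the zero-product identity and supplying the diagonal blocks from Lemmas \ref{501} and \ref{502}(ii). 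Checking that every auxiliary term cancels in this degenerate case is where the real computation lies.
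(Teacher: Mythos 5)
Your argument is complete only in the subcase $m\neq n$: there the system $m\delta(PA)+n\delta(AP)=\cdots$, $m\delta(AP)+n\delta(PA)=\cdots$ from the proof of Lemma \ref{502} has coefficient matrix $\left[\begin{smallmatrix} m & n \\ n & m \end{smallmatrix}\right]$ with determinant $m^{2}-n^{2}\neq 0$ (the standing assumption gives $m+n\neq 0$), so you do get the one-sided rules $\delta(EA)=\delta(E)A+E\delta(A)$ and $\delta(AE)=\delta(A)E+A\delta(E)$ at every idempotent $E$; since $PT(I-P)=(P+PT(I-P))-P$ and $PS(I-P)=(P+PS(I-P))-P$ are differences of idempotents, (i) and (ii) then follow by linearity. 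That is a genuinely different, and for this subcase slicker, route than the paper's. But the lemma's hypotheses include $m=n$ (Section 5 assumes only $mn\neq 0$ and $m+n\neq 0$, and the Jordan case $m=n=1$ is the main point, feeding Theorem \ref{505}), and for $m=n$ you explicitly stop at ``checking that every auxiliary term cancels \ldots is where the real computation lies.'' That unexecuted cancellation \emph{is} the content of the lemma, so the proposal has a genuine gap exactly where the statement is hardest.

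What closes the gap in the paper is a corner-localization step you never carry out, and it uses a hypothesis your proposal never invokes: that $\delta$ maps $\mathrm{alg}\mathcal{L}$ into itself. From Lemma \ref{501}, $\delta(P)=P\delta(P)(I-P)$; applying Lemma \ref{502}(i) to $A=PT(I-P)$ gives $\delta(PT(I-P))=\delta(PT(I-P))P+P\delta(PT(I-P))$, and to pass from $X=XP+PX$ to $X=PX(I-P)$ one needs $(I-P)XP=0$, which holds only because $X=\delta(PT(I-P))\in\mathrm{alg}\mathcal{L}$ and $P\in\mathcal{L}$; Lemma \ref{502}(ii) similarly yields $(I-P)\delta(PTP)=0$ and $\delta((I-P)T(I-P))P=0$. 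With these in hand the paper does not feed $T(I-P)\cdot SP=0$ into the defining identity at all: it writes $SPT(I-P)=PSP\cdot PT(I-P)+PT(I-P)\cdot PSP$ (the second product vanishes) and applies the symmetric rule of Corollary \ref{503} to the pair $\bigl(PT(I-P),PSP\bigr)$, valid uniformly for all $m,n$ with $m+n\neq 0$; the two wrong-order terms $\delta(PT(I-P))PSP$ and $PT(I-P)\delta(PSP)$ die by the localizations, and expanding $\delta(PSP)$ via Lemma \ref{502}(ii) gives (i), with (ii) symmetric. Your zero-product start can in fact also be completed for $m=n$ using the same localization facts (for instance $T(I-P)\delta(SP)=0$ because $\delta(PSP)=P\delta(PSP)$, and the surviving spurious terms $PS\delta(P)T(I-P)$ and $-SP\delta(P)T(I-P)$ cancel against each other), but those facts must be proved first, and doing so requires the range condition $\delta(\mathrm{alg}\mathcal{L})\subseteq\mathrm{alg}\mathcal{L}$ that is absent from your outline; until that is supplied, the $m=n$ branch of your proposal does not constitute a proof.
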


\begin{proof}
(i)~~~Let $P$ be in $\mathcal{L}$. Since $\delta(P)=\delta(P)P+P\delta(P)$, we see that
$P\delta(P)P=(I-P)\delta(P)(I-P)=0$. So $\delta(P)=P\delta(P)(I-P)$. Thus by Lemma \ref{502}, for every $T\in alg\mathcal{L}$,
\begin{eqnarray*}
\delta(PT(I-P))&=&\delta(PPT(I-P)+PT(I-P)P)\\
&=&\delta(P)PT(I-P)+P\delta(PT(I-P))\\
&&+\delta(PT(I-P))P+PT(I-P)\delta(P)\\
&=&\delta(PT(I-P))P+P\delta(PT(I-P)).
\end{eqnarray*}
This implies that $\delta(PT(I-P))=P\delta(PT(I-P))(I-P)$ for every $T\in alg\mathcal{L}$.
By Lemma \ref{502}(ii), we have $(I-P)\delta(PTP)=\delta((I-P)T(I-P))P=0$ for every $T\in alg\mathcal{L}$.\

Since $PT(I-P)=P-(P-PT(I-P))$ and $P-PT(I-P)$ is an idempotent, by Corollary \ref{503}, for $S, T\in alg\mathcal{L}$,
\begin{eqnarray*}
\delta(SPT(I-P))&=&\delta(PSPPT(I-P)+PT(I-P)PSP)\\
&=&\delta(PSP)PT(I-P)+PSP\delta(PT(I-P))\\
&&+\delta(PT(I-P))PSP+PT(I-P)\delta(PSP)\\
&=&(\delta(P)SP+P\delta(S)P+PS\delta(P))PT(I-P)+PSP\delta(PT(I-P))\\
&=&\delta(S)PT(I-P)+S\delta(PT(I-P)).
\end{eqnarray*}
With a proof similar to the proof of (i), we may show that (ii) is also true.
\end{proof}

By Lemmas \ref{501}, \ref{502} and \ref{504}, with a proof analogous to \cite[Theorem, 3.2]{JDCSL}, we can obtain the following theorem.
\begin{theorem}\label{505}
Let $\mathcal{L}$ be a CSL on $H$.
If $\delta$ is a norm-continuous linear (\textit{m, n})-derivable mapping at 0 from $\mathcal{A}$ into itself and $\delta(I)=0$,
then $\delta$ is a derivation.
\end{theorem}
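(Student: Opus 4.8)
The plan is to prove directly that the bilinear defect
$$F(S,T)=\delta(ST)-\delta(S)T-S\delta(T)$$
vanishes for all $S,T$ in the CSL algebra $\mathcal{A}=\mathrm{alg}\mathcal{L}$. Since $\delta$ maps into $\mathrm{alg}\mathcal{L}$ we have $F(S,T)\in\mathrm{alg}\mathcal{L}$, and both $F(\cdot,T)$ and $F(S,\cdot)$ are norm-continuous because $\delta$ is. The two facts I would extract first from the preceding lemmas are: (a) by Lemma \ref{504}, $F(S,PT(I-P))=0$ and $F(PS(I-P),T)=0$ for every $P\in\mathcal{L}$, so $F$ vanishes as soon as one argument is an ``off-diagonal'' block $PX(I-P)$; and (b) using $\delta(P)=P\delta(P)(I-P)$ (established inside the proof of Lemma \ref{504}) together with Lemma \ref{502}(ii), a short computation gives $F(PSP,(I-P)T(I-P))=0$ and $F((I-P)S(I-P),PTP)=0$, so $F$ also kills products of complementary diagonal blocks.

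Next I would introduce the block decomposition attached to a finite chain $0=P_0<P_1<\cdots<P_r=I$ in $\mathcal{L}$, with atoms $E_i=P_i-P_{i-1}$. Because every $X\in\mathrm{alg}\mathcal{L}$ satisfies $(I-P)XP=0$, one has the \emph{exact} identity $X=\sum_{i\le j}E_iXE_j$ for each such chain. Expanding $F(S,T)$ bilinearly over these blocks, every term containing a strictly upper block $E_iXE_j$ with $i<j$ is (a difference of) off-diagonal pieces and dies by (a), while the complementary diagonal terms $F(E_iSE_i,E_kTE_k)$ with $i\ne k$ die by the atom version of (b). Hence, for \emph{every} chain, $F(S,T)=\sum_i F(E_iSE_i,E_iTE_i)$; that is, the whole defect is concentrated on the diagonal corners $E_i\,\mathrm{alg}\mathcal{L}\,E_i$.

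The remaining step is to show this diagonal contribution is zero, and for this I would split $\mathcal{L}$ into its atomic and continuous parts. On an atom $E$ the corner is governed by the diagonal von Neumann algebra $\mathcal{D}=\mathcal{L}'$: its projections are idempotents of $\mathrm{alg}\mathcal{L}$, so Lemma \ref{502}(i) yields the Jordan identity $\delta(E_1E_2+E_2E_1)=\delta(E_1)E_2+E_1\delta(E_2)+\delta(E_2)E_1+E_2\delta(E_1)$ for all projections $E_1,E_2\in\mathcal{D}$; spectral approximation plus the norm-continuity of $\delta$ promotes this to the full Jordan-derivation identity on $\mathcal{D}$, and the known fact that a bounded Jordan derivation of a von Neumann algebra into a bimodule is a derivation forces $F=0$ on such corners. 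For the continuous part of $\mathcal{L}$ I would refine the chain: the identity $F(S,T)=\sum_i F(E_iSE_i,E_iTE_i)$ holds for every chain, the atoms shrink to $0$ along a refining net, and the norm-continuity of $\delta$ (hence of $F$) forces the remaining sum to $0$ in the limit. Combining the atomic and continuous cases gives $F(S,T)=0$, so $\delta$ is a derivation, which is exactly the scheme of \cite[Theorem 3.2]{JDCSL}.

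The main obstacle is this last stage. Neither Lemma \ref{504} nor the complementary-block identities touch the diagonal corners, so the entire weight of the proof rests on the continuity hypothesis: one must obtain uniform norm control of $\sum_i F(E_iSE_i,E_iTE_i)$ along a refining net and verify that the atomic von Neumann pieces and the vanishing continuous pieces fit together. I expect the book-keeping of this refinement argument — rather than any single algebraic identity — to be the delicate point, precisely as in the cited CSL computation.
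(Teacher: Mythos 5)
Your first two stages are sound and are exactly what the paper's preparation delivers: the vanishing of $F$ on strictly upper blocks is Lemma \ref{504}, and your complementary-corner identity $F(PSP,(I-P)T(I-P))=0$ does follow from Lemma \ref{502}(ii) applied to the idempotents $P$ and $I-P$ together with $\delta(P)=P\delta(P)(I-P)$ (which holds for $P\in\mathcal{L}$ because $(I-P)\delta(P)P=0$ automatically, $\delta(P)$ lying in $\mathrm{alg}\mathcal{L}$). So the reduction $F(S,T)=\sum_i F(E_iSE_i,E_iTE_i)$ for every finite chain $0=P_0<\cdots<P_r=I$ in $\mathcal{L}$ is legitimate.

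The genuine gap is your third stage, and it is a missing idea rather than book-keeping. First, the corner attached to a chain-atom $E_i$ is not governed by $\mathcal{D}=\mathcal{L}'$: one has $E_i(\mathrm{alg}\mathcal{L})E_i=\mathrm{alg}(E_i\mathcal{L})$ on $E_iH$, which is a von Neumann algebra only when $E_i$ is an atom in the strong sense that $E_iP\in\{0,E_i\}$ for every $P\in\mathcal{L}$. For any CSL with a continuous part — already for a continuous nest — every chain-atom corner is again a proper CSL algebra of the same kind, so your reduction is circular; and a general CSL need not admit any refinement of a given chain at all. Second, the refining-net limit has no mechanism behind it: the left-hand side $F(S,T)$ is independent of the chain, so $\sum_i F(E_iSE_i,E_iTE_i)$ is \emph{constant} under refinement, and norm-continuity only yields $\|F(X,Y)\|\le C\|X\|\,\|Y\|$ termwise. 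The compressions do not shrink — for $S=T=D\in\mathcal{L}'$ one has $E_iDE_i=DE_i$ and $\sum_i E_iDE_i=D$ for every chain — while the number of terms grows, so nothing tends to $0$; for such diagonal pairs the vanishing must come from Johnson's theorem \cite{NON} on the von Neumann algebra $\mathcal{D}$ (your atomic-case ingredient, which is correct), not from a limit. This is precisely where the paper stops arguing and invokes \cite[Theorem 3.2]{JDCSL}: Lu's treatment of the diagonal of a CSL algebra proceeds by CSL-specific arguments built on $\mathcal{D}=\mathcal{L}'$ and the Jordan-derivation machinery, not by partition refinement. As it stands, your scheme proves the theorem only for lattices whose maximal chains have strong atoms (e.g. finite CSLs), and the continuous case — the heart of Theorem \ref{505} — remains unproved.
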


\section*{Acknowledgement}
The authors are thankful to the referee for careful reading of the paper and valuable suggestions.\\
This work is supported by NSF of China.

\end{document}